\def\l@subsection{\@tocline{2}{0pt}{2.5pc}{5pc}{}}
\renewcommand\tocchapter[3]{%
  \indentlabel{\@ifnotempty{#2}{\ignorespaces#2.\quad}}#3%
}
\newcommand\@dotsep{4.5}
\def\@tocline#1#2#3#4#5#6#7{\relax
  \ifnum #1>\c@tocdepth 
  \else
    \par \addpenalty\@secpenalty\addvspace{#2}%
    \begingroup \hyphenpenalty\@M
    \@ifempty{#4}{%
      \@tempdima\csname r@tocindent\number#1\endcsname\relax
    }{%
      \@tempdima#4\relax
    }%
    \parindent\z@ \leftskip#3\relax \advance\leftskip\@tempdima\relax
    \rightskip\@pnumwidth plus1em \parfillskip-\@pnumwidth
    #5\leavevmode\hskip-\@tempdima{#6}\nobreak
    \leaders\hbox{$\m@th\mkern \@dotsep mu\hbox{.}\mkern \@dotsep mu$}\hfill
    \nobreak
    \hbox to\@pnumwidth{\@tocpagenum{#7}}\par
    \nobreak
    \endgroup
  \fi}
\renewcommand\csname r@tocindent0\endcsname{0pt}
\def\l@subsection{\@tocline{2}{0pt}{2.5pc}{5pc}{}}
\newtheorem{theorem}{Theorem}[section]
\newtheorem{proposition}[theorem]{Proposition}
\newtheorem{lemma}[theorem]{Lemma}
\newtheorem{corollary}[theorem]{Corollary}
\newtheorem{remark}{Remark}[section]
\newtheorem{example}{Example}[section]
\newtheorem{claim}{Claim}[section]
\newcommand{\s}{s}    
\newcommand{\comp}{\circ}
\newcommand{\e}{\varepsilon}
\newcommand{\N}{\mathbb{N}}
\newcommand{\Z}{\mathbb{Z}}
\newcommand{\Pd}{\mathbb{P}}
\newcommand{\Ed}{\mathbb{E}}
\newcommand{\dist}[3]{\mathrm{dist}_{T_{#1}}(#2,#3)}
\newcommand{\degr}[2]{\mathrm{deg}_{T_{#1}}(#2)}
\newcommand{\Name}{\rm{TBRW}}
\newcommand{\Root}{{\rm{root}}}
\newcommand{\bxi}{\boldsymbol{\xi}}
\newenvironment{claimproof}[1]{\par\noindent\textit{Proof of the claim:}\space#1}{\hfill $\blacksquare$}
\renewcommand\thetable{\thesection.\@arabic\c@table}
\title[Tree Builder Random Walk]{Tree Builder Random Walk: recurrence, transience and ballisticity}
\author{Giulio Iacobelli$^1$, Rodrigo Ribeiro$^2$, Glauco Valle$^3$ and Leonel Zuazn\'abar$^4$}
\thanks{2. Supported by The Stochastic Models of Disordered and Complex Systems (NC120062) supported by the Millenium Scientific Initiative of the Ministry of Science and Technology  (Chile). }
\thanks{3. Supported by CNPq grant 305805/2015-0, Universal CNPq project 421383/2016-0 and FAPERJ grant E-26/203.048/2016.}
\thanks{4. Supported by PNPD/CAPES grant  88882.315481/2013-01}
\address{
\newline
\newline
$^1$ UFRJ - Instituto de Matem\'atica.
\newline  Caixa Postal 68530, 21945-970, Rio de Janeiro, Brasil.
\newline
e-mail: {\rm \texttt{giulio@im.ufrj.br}}
\newline
\newline
$^2$ PUC Chile.
\newline  Av. Vicu\~{n}a Mackenna 4860, Macul, La Florida, Regi\'{o}n Metropolitana, Chile.
\newline
e-mail: {\rm \texttt{rribeiro@impa.br}}
\newline
\newline
$^3$ UFRJ - Instituto de Matem\'atica.
\newline  Caixa Postal 68530, 21945-970, Rio de Janeiro, Brasil.
\newline
e-mail: {\rm \texttt{glauco.valle@im.ufrj.br}}
\newline
\newline
$^4$ 
USP - Instituto de Matem\'atica e Estat\'istica.
\newline  R. do Mat\~ao, 1010 - Butant\~a, S\~ao Paulo - SP, CEP: 05508-090, Brasil.
\newline
e-mail:{\rm \texttt{lzuaznabar@ime.usp.br}}
}
\subjclass[2010]{60K37}
\keywords{random walks, random environment, random trees, transience, recurrence, ballisticity, ellipcity}
\begin{document}

\maketitle

\begin{abstract}
The Tree Builder Random Walk is a special random walk that evolves on trees whose size increases with time, randomly and depending upon the walker. After every $s$ steps of the walker, a random number of vertices are added to the tree and attached to the current position of the walker. These processes share similarities with other important classes of markovian and non-markovian random walks presenting a large variety of behaviors according to parameters specifications. We show that for a large and most significant class of tree builder random walks, the process is either null recurrent or transient. If $s$ is odd, the walker is ballistic and thus transient. If $s$ is even, the walker's behavior  can be explained from local properties of the growing tree and it can be either null recurrent or it gets trapped on some limited part of the growing tree. 
\end{abstract}

\tableofcontents


\section{Introduction}
\label{sec:intro}

In this paper we consider a special random walk, which we call Tree Builder Random Walk (\Name). It evolves on trees whose size increases randomly with time. Specifically, given $s\in \mathbb{N}$ (a parameter of the model), after every $s$ transitions of the walker a random number of vertices are added to the tree and attached to the current position of the random walk. As will become clear from our results, the \Name{} has an intrinsic mathematical interest connected to other important classes of markovian and non-markovian random walks such as Random Walks in Random Environment~\cite{SZ, OZ}, Reinforced Random Walks~\cite{D, DST, RP1988, RP2007} and Excited Random Walks~\cite{KZ}. It is also important to mention that the study of random graphs~\cite{H, JLR}, and random graphs dynamics~\cite{Du2} has been a very active field of research motivated by the increasing applicability of network models to represent real life phenomena.   Many  interesting questions are related to the evolution of random walks on 
these growing/random networks~\cite{A2018, DHS, LPP, PSS}. 
Despite the similarities with these models, the \Name{} possesses the distinctive feature of having the evolution of the graph dependent upon the walker's position,  see \cite{FIN} and references therein. 
%

In order to present adequately our results and draw connections to previous works let us first introduce formally the model.


\subsection{The model}
\label{sec:model}

Let $T$ be a tree and  denote by $V(T)$ and $E(T)$ its vertex and edge sets, respectively. Let $\Omega$ be the collection of pairs $(T,x)$, where $T$ is a tree and $x \in V(T)$ is one of its  vertices. Now fix a locally finite tree $T_0$, a positive integer $s$ and a sequence of non-negative integer random variables $\boldsymbol{\xi} = \{\xi_{n}\}_{n\in \N}$.
The \Name{}  is a stochastic processes $\{(T_n ,X_n)\}_{n \ge 0}$ on $\Omega$ ($T_n$ denotes the tree  at time $n$ and $X_n$  one of its vertices)  defined inductively on almost every realization of $\boldsymbol{\xi}$ according to the update rules below.
\begin{enumerate}
	\item Obtain a locally finite tree $T_{n+1}$ from $T_{n}$ as follows:
	
	if $n= 0\,{\rm mod}\, s$, add  $\xi_{n}$  new leaves to $X_{n}$,
	
	if $n\neq 0\,{\rm mod}\, s$,  $T_{n+1}=T_{n}$.
	\item Choose uniformly one edge in $\{\{X_n, y\}: \{X_n, y\} \in E(T_{n+1}) \}$, i.e., an edge incident to $X_n$ in $T_{n+1}$,  and set  $X_{n+1}$ as the chosen neighbor of $X_n$. 
\end{enumerate}
We stress out  the subscript $n+1$ of $T$ in~(2); it means that we may add a new neighbor at (1) and  choose it at (2). If $\boldsymbol{\xi}$ is a sequence of independent random variables, the \Name{} process is a Markov chain.

Note that $s$ and the sequence of random variables $\boldsymbol{\xi}$ are parameters of the model. The first one allows the tree to grow only at times multiple of $s$, whereas the second controls the growth of the tree; for this reason we call the sequence $\boldsymbol{\xi}$ \textit{environment process}.  We denote by $\Pd_{T_0,x_0, s, \xi}(\cdot)$ the law of~$\{( T_n ,X_n)\}_{n \in \mathbb{N}}$ when~$( T_0 ,X_0)=(T_0, x_0)$, and by $\mathbb{E}_{T_0,x_0, s, \xi}(\cdot)$ the corresponding expectation.

For the sake of simplicity we will consider some nomenclature that will be useful. Given a realization of $T_n$, consider $x\in V(T_n)$, if $x$ has a single neighbor in $T_n$, we say that $x$ is a {\it leaf} of $T_n$ or a {\it leaf} of $z \in V(T_n)$ if $z$ is the single neighbor of $x$ in $T_n$.
We also remark that we can allow~$T_0$ to be a single vertex with a self-loop. This will be explained in the following sections.

\bigskip 

The \Name{} model generalizes a couple of models which have been recently studied; the NRRW (No Restart Random Walk) \cite{FIN} and the BGRW (Bernoulli Growth Random Walk)~\cite{FIORR}. In particular, \Name{} reduces to NRRW assuming $\xi_n \equiv 1\; \forall n$, whereas it reduces to BGRW  assuming $\s=1$ and $\xi_n\sim {\rm Ber}(p)\; \forall n$. 

%
%
%
%

\subsection{Environment conditions}\label{sec:environment}

The main goal of this paper is to provide conditions on the environment process $\boldsymbol{\xi} = \{\xi_{j}\}_{j \in \N}$ under which we observe recurrence or transience of~$\{X_n\}_{n \in \N}$. Since the process $\boldsymbol{\xi}$ controls how the walk modifies its environment, we refer to any distributional condition on $\boldsymbol{\xi}$ as \textit{environment condition}. In this section we list all conditions that will be used throughout the paper together with some brief discussion and examples. We reserve the letters $P$ and $E$ for the marginal distribution  of $\boldsymbol{\xi}$ and the corresponding expectation.

Two basic hypothesis on the variables $\xi_n$, $n\ge 1$, are that they are independent or even i.i.d. In the first case we say that $\boldsymbol{\xi}$ is an independent environment and in the second that it is an i.i.d. environment. For the other ones we reserve special notation. The first condition, denoted by (UE) is the following one
\begin{equation}\tag{UE}\label{cond:UE}
\inf_{n\in \N} P\left( \xi_{n}\geq 1\right)=\kappa>0\;.
\end{equation}
Tracing a parallel with the classical theory of random walk on random environment, the above condition is similar in spirit with the uniformly elliptic condition also denoted by~\eqref{cond:UE}. In our case, whenever the walk can add a new leaf to its environment, it has bounded away from zero probability of adding at least one leaf. This fact will be crucial to prove ballisticity when  $s$ is odd, since we can use this property to ``force routes of escape" as explained in Section~\ref{sec:odd}.

The next condition imposes restrictions on the moments of the environment process. Given $r > 0$, we say that $\boldsymbol{\xi}$ satisfies condition (\ref{cond:Mr})$_r$ if
\begin{equation}\tag{M}\label{cond:Mr}
\sup_{n\in \N} E(\xi_{n}^r) \le M < \infty\;.
\end{equation} 
The moment conditions (\ref{cond:Mr})$_r$ are required to control the growth of the graph, for instance to avoid the creation of traps for the random walk  (see, Theorem~\ref{thm:totti}).

The next two conditions are related to the\textit{ asymptotic behavior} of the environment process. For $n\ge 1$ 
define $S_n := \sum_{j=1}^n \xi_j$. 
  We say $\boldsymbol{\xi}$ satisfies assumption~(\ref{cond:S}) if there exists a positive constant $c$ and a function~$g: \N \setminus \{0\} \to \mathbb{R}^+$ of non-summable inverse ($\sum_{n =1}^{\infty}\frac{1}{g(n)} = \infty$) such that
\begin{equation}\tag{S}\label{cond:S}
P\left( \limsup_{n \rightarrow \infty} \frac{S_n}{g(n)} \le c\right) = 1\;.
\end{equation}
{In essence condition \eqref{cond:S} assures that the number of added leaves by time $n$ is bounded from above by a function  $g(n)$, with $g(n)/n \to 0$ (as $n$ grows), and thus the degree of a vertex cannot grow too fast. }

We say the environment $\boldsymbol{\xi}$ satisfies condition (\ref{cond:I}) if there exist a positive constant $c$ and a positive function~$f: \mathbb{N}\setminus \{0\}\to \mathbb{R}^+$ of summable inverse ($\sum_{n =1}^{\infty}\frac{1}{f(n)} < \infty$), such that
\begin{equation}\tag{I}\label{cond:I}
\begin{split}
P \left( \liminf_{n \rightarrow \infty} \frac{S_n}{f(n)} \ge c\right) & = 1\;. 
\end{split}
\end{equation}
{Condition \eqref{cond:I} requires that the number of added leaves by time $n$ is bounded from below by a  function $f(n)$, with $n/f(n)\to 0$ (as $n$ grows), and thus the degree of a vertex may grow very fast. }

%
We end this section with some important examples of environment for which one may verify some of the above conditions. The first one is the particular case where the environment $\boldsymbol{\xi}$ is i.i.d. with finite mean. It satisfies condition~(\ref{cond:UE}) and (\ref{cond:Mr})$_1$. Moreover, the Strong Law of Large Numbers assures that condition~(\ref{cond:S}) also  holds. Even more generally, if the environment  is an ergodic process, then (\ref{cond:S}) follows from the Ergodic Theorem.

On the other hand, for an independent environment such that $\xi_j$, $j\ge 1$, have heavy tails, then (\ref{cond:I}) holds. For instance, consider $\xi_j$ as independent random variables having power-law distributions such that
\[
P\left( \xi_j \ge x \right) \ge \frac{\delta}{x^{\alpha}}, \ \forall \, j \ge 1\;, 
\]
for some $\delta > 0$ and $\alpha \in (0,1)$. Now, consider $\beta\in(\alpha,1)$. Then for all $i$ large enough we get
$$
P \left(S_i \leq  i^{\frac{1}{\beta}}\right) 
\leq P\left(\max\{\xi_{1},\dots,\xi_{i}\}\leq i^{\frac{1}{\beta}}\right)
= \prod_{j=1}^i P\left(\xi_{j} \le i^{\frac{1}{\beta}}\right)
\le \left(1-\frac{\delta}{i^{\frac{\alpha}{\beta}}}\right)^i
\leq e^{-\delta \, i^{\gamma}}, 			
$$
where $\gamma = 1 - \frac{\alpha}{\beta}$. Since $\sum_{i\geq 1}e^{- \delta \, i^{\gamma}}<\infty$,  Borel-Cantelli lemma shows that
$$
P \left( \liminf_{i\to\infty} \frac{S_i }{i^{\frac{1}{\beta}}}\geq 1\right) = 1\;, 
$$
i.e., condition \eqref{cond:I} holds for $f(i) = i^{1/\beta}$ and $c=1$.

\subsection{Main results}\label{sec:results}
In this section we  present our results and  trace a parallel with their possible counterparts in the more classical theory of random walk on random environments. Before we can state properly our main results  we need to introduce some  definitions. As said before we want to study recurrence/transience and related properties for a $(\boldsymbol{\xi},s)$-\Name{}~$\{(T_n ,X_n)\}_{n \ge 0}$. Note that for independent environments $\boldsymbol{\xi}$ the process $\{(T_n ,X_n)\}_{n \ge 0}$ is markovian and  under (\ref{cond:UE}) it is always transient in the usual sense, since $T_n$ increases. {However, the process $\{X_n\}_{n \ge 0}$ is non-markovian and since $T_n$ increases, we need adequate definitions of recurrence and transience. With a slight abuse of terminology, when we say that the \Name{} process is either recurrent or transient, we will always be referring ourselves to the corresponding process $\{X_n\}_{n\geq 0}$ and not to the 
pair $\{(T_n ,X_n)\}_{n \ge 0}$.}  

Let $\{(T_n ,X_n)\}_{n \ge 0}$ be a \Name{} and $(T_0, x_0)$ its initial state. 
{We say that $z \in T_0$ is ${\it recurrent}$ if 
$$
\Pd_{T_0,x_0, s, \xi} \big( X_n = z \ \text{infinitely often} \big) = 1 \;.
$$
Since the graph structure is increasing, we also need to define recurrence for vertices that are added to the graph at some time.
So given a fixed realization $(\widetilde{T},x)$ of $(T_n,X_n)$ (i.e. $(\widetilde{T},x)$, with $x \in V(\widetilde{T})$ is any possible pair attainable from $(T_0,x_0)$ at time $n$) and $z \in V(\widetilde{T})$, we say that $z$ is ${\it recurrent}$ if
$$
\Pd_{\widetilde{T},x,s,\theta_n(\bxi)} ( X_n = z \ \text{infinitely often}) = 1\;,
$$
where and $\theta_k$ as the forward time shift such that~$\theta_k(\boldsymbol{\xi}) = \{\xi_{j+k}\}_{j \in \N}$. In the above display there is some abuse of notation if the environment is not independent, indeed $\Pd_{\tilde{T},x,s,\theta_k(\bxi)}$ depends on the whole history of the process until time $n$ and the display should be understood as the conditional probability
$$
\Pd_{T_0,x_0, s,\xi} \Big( X_n = z \ \text{infinitely often} \; \Big| T_n=\widetilde{T} , \; X_n = x \in V(\widetilde{T}), \; z \in V(\widetilde{T}) \Big) = 1 \;.
$$
The random walk in $(\boldsymbol{\xi},s)$-\Name{} is {\it recurrent} if, for every $n$ and every possible realization of $T_n$, all $z \in \cup_n V(T_n)$ are recurrent. 
If the \Name{} is not recurrent, we say that it is {\it transient}.}

\begin{remark} Recurrence or transience {for the $(\boldsymbol{\xi},s)$-\Name{} may depend on the choice of $T_0$, (even if $T_0$ is finite). Also, since the trees are connected, the \Name{} is irreducible in the usual sense that every vertex is reachable from any given configuration with positive probability. So irreducibility has no role in the results. }
\end{remark}
Let $ \eta_{z}$ denote the first time the random walk visits  vertex $z$, i.e.,   
\begin{equation}\label{eta}
\eta_{z} := \inf \left \lbrace n \ge 1 \; \middle | \;X_n = z \right \rbrace\;.
\end{equation} 
{Given $(T_0,x_0)$ and assuming the random walk in \Name{} is recurrent, we say that this random walk is {\it positive recurrent} if for any given realization of $(T_n,x_n)$ and $z \in V(T_n)$
$$
\mathbb{E}_{T_n,x_n,s,\theta_n(\bxi)}\left( \eta_z \right) < \infty\;.
$$
If the \Name{} is recurrent but not positive recurrent, then we say that it is {\it null recurrent}. This definition of positive recurrence is not directly comparable to the definition of positive recurrence for Markov chains. Since the tree is growing, there is no equilibrium and the mean return time to a vertex depends on the number of previous visits to  that vertex. It might increase on each visit to that vertex and even diverge to infinity as the vertex keeps being visited.}

\medskip 

As usual for trees, we will sometimes designate a particular vertex as the root of the tree, either because we simply want to fix a single vertex or because this vertex is special in some sense. We refer to this vertex simply as $\Root$.

We say that the \Name{} is {\it ballistic} if there exists a positive constant $c$, such that
	\begin{equation*}
	\liminf_{n \rightarrow \infty}\frac{\mathrm{dist}_{T_n}(X_n,\Root)}{n}  \geq c, \; \Pd_{T_0,x_0,s,\bxi}\text{- almost surely}\;.
	\end{equation*}
It is clear that every ballistic \Name{} is transient.

\medskip 
We can now state the main results of this paper.

\begin{theorem}[Recurrence/Traps for $s$ even] \label{rec-even} Consider a {$(s,\boldsymbol{\xi})$-\Name{}} process with $s$ even. For every initial state $(T_0,x_0)$ with $T_0$ finite, there exist two regimes:
\begin{enumerate}
\item[(i)]{\rm (Recurrence is inherited)} if $\boldsymbol{\xi}$ satisfies condition \eqref{cond:S}, then the {\Name{}} is recurrent.
\item[(ii)]{\rm (The dangerous environment)} if $\boldsymbol{\xi}$ is an independent environment  satisfying condition \eqref{cond:I}, then  
 there exists  $n$ such that the walker gets trapped at time $n$, $\Pd_{T_{0},x_0,s, \boldsymbol{\xi}}$-almost surely, i.e.
$$
\Pd_{T_{0},x_0,s, \boldsymbol{\xi}} \big( \textrm{there exists } x\in \cup_n V(T_n) \textrm{ and } k 
\textrm{ such that } X_{sn+k} = x \ \forall \, n \big) = 1\;.
$$
\end{enumerate}
\end{theorem}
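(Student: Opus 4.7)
The plan is to analyse both regimes through the block-chain $Y_n := X_{sn}$, the walker's location at each leaf-addition epoch. Write $L_n$ for the number of leaves attached to $Y_n$ immediately after the addition at time $sn$, and $d_n$ for its non-leaf degree at that instant. The crucial geometric observation---and where the parity of $s$ enters decisively---is that since $s$ is even the walker can return to $Y_n$ at time $s(n+1)$ by \textit{bouncing}, i.e.\ alternately stepping to one of the leaves of $Y_n$ and back. Given $\mathcal{F}_{sn}$, this bouncing event has conditional probability
\begin{equation*}
q_n \;:=\; \left(\frac{L_n}{L_n+d_n}\right)^{s/2} \;\ge\; 1-\frac{s\,d_n}{2(L_n+d_n)},
\end{equation*}
obtained from $s/2$ independent attempts to pick a leaf neighbour. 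Both parts of the theorem then reduce to controlling $L_n$ under the opposite growth hypotheses \eqref{cond:S} and \eqref{cond:I}.

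For part~(i), I argue by contradiction that the walker cannot remain pinned at any single vertex $v$ forever. On the event that the walker is trapped at $v$ from some time $sN$ onward, $d_n$ stabilises at a constant $d$, every subsequent $\xi_{sk}$ enlarges $L_k$, and by \eqref{cond:S} one has $L_n \le c\,g(n)$ almost surely. Consequently
\[ \Pd\bigl(Y_{n+1} \ne v \,\big|\, \mathcal{F}_{sn}\bigr) \;\ge\; 1-q_n \;\ge\; \frac{c'}{g(n)}, \]
and $\sum_n 1/g(n)=\infty$, so L\'evy's conditional Borel--Cantelli lemma forces infinitely many escapes, contradicting trapping. Recurrence at an arbitrary target $z \in \bigcup_n V(T_n)$ is then obtained by a standard tree/strong-Markov argument: once $z$ has been born it sits in a finite subtree, and since the walker cannot linger at any intermediate vertex, connectedness of the tree forces infinitely many visits to $z$.

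For part~(ii), the very same estimate is used in reverse. Let $B_n$ denote the event that the walker bounces throughout every period $k \ge n$; on $B_n$ we have $Y_k \equiv v := Y_n$ and $L_k = L_n + \sum_{j=n+1}^{k}\xi_{sj}$, so by the independence of $\bxi$ and \eqref{cond:I} applied to the sub-sequence $(\xi_{sj})_j$, eventually $L_k \ge c\,f(k)$ almost surely. Therefore
\begin{equation*}
\Pd\bigl(B_n \,\big|\, \mathcal{F}_{sn}\bigr) \;\ge\; \prod_{k \ge n}\!\left(1-\frac{s\,d}{2c\,f(k)}\right) \;=:\; \rho_n,
\end{equation*}
which is strictly positive because $\sum 1/f(k) < \infty$, and satisfies $\rho_n \to 1$ as $n\to\infty$. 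The events $B_n$ are increasing and each is contained in the trapping event, so $\Pd(\text{trapping}) \ge \lim_n \Pd(B_n) \ge \lim_n \rho_n = 1$.

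The main obstacle is the circularity of (ii): the bound $L_k \ge c\,f(k)$ relies on the walker having already stayed at $v$ long enough for leaves to accumulate, which is precisely what one is trying to prove. The infinite-product estimate above resolves this by bounding $\Pd(B_n\mid\mathcal{F}_{sn})$ directly, rather than iterating period-by-period escape bounds that would compound unfavourably. A minor technical annoyance is that \eqref{cond:I} is phrased in terms of $S_n=\sum_{j=1}^n\xi_j$ while only $(\xi_{sj})_j$ actually drives the tree; under independence this is handled by observing that the partial sums of the sub-sequence inherit the same asymptotic lower bound as $S_n$ up to a constant depending on $s$.
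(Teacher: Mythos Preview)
Your argument for part~(i) has a genuine gap. What you actually prove (via conditional Borel--Cantelli) is the analogue of the paper's Lemma~\ref{lem:time-exits}(i): the walker cannot stay pinned at any single vertex forever. But non-trapping is \emph{not} the same as recurrence in a growing tree---the walker could in principle escape each vertex yet drift away and never return to a fixed target $z$. Your final sentence (``connectedness of the tree forces infinitely many visits'') is where the real work hides, and it is not a standard argument. The paper closes this gap by exploiting the parity structure that you never invoke: between consecutive crossings of the self-loop, the process $Y_n=X_{2n}$ (observed at its jump times, call it $Z$) is a genuine simple random walk on a \emph{fixed finite} graph, because when $s$ is even the walker can only add leaves to vertices of one parity, and $Z$ lives on the other. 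This finiteness is what forces $Z$ to be recurrent and ultimately yields infinitely many self-loop crossings via a martingale argument. Without this structural observation your proof of (i) is incomplete. (There is also a smaller issue: your displayed inequality $\Pd(Y_{n+1}\neq v\mid\mathcal{F}_{sn})\ge 1-q_n$ goes the wrong way, since $q_n=(L_n/(L_n+d_n))^{s/2}$ is only a \emph{lower} bound on the return probability---there are other ways to return in $s$ steps besides leaf-bouncing.)

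Your argument for part~(ii) also has a gap, one you partly anticipate but do not resolve. The quantity $\rho_n$ depends on $d_n$, the non-leaf degree of $Y_n$, which is \emph{random} and not a priori bounded: as the walker explores, leaves can acquire children and become non-leaves, so $d_n$ may grow with $n$. If $d_n\to\infty$ then $\rho_n\not\to 1$ and your conclusion $\Pd(\text{trapping})\ge\lim_n\rho_n=1$ fails. The paper handles exactly this difficulty with a bookkeeping argument: the maximal non-leaf degree $c(T_{sk})$ increases by at most one each time $X_{sk}$ is a ``quasi-star'' (a vertex with exactly one non-leaf neighbour), and one then splits on whether quasi-star visits occur finitely or infinitely often. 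In either case the conditional trapping probabilities sum to infinity, and a second application of the conditional Borel--Cantelli / martingale lemma finishes the job. Your infinite-product estimate is essentially Lemma~\ref{lem:time-exits}(ii), but by itself it only gives $\Pd(B_n\mid\mathcal{F}_{sn})>0$, not $\to 1$.
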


For the specific case $s$ even and $\xi_j \equiv 1$, the recurrence of \Name{} was proved in \cite{FIN}; Theorem~\ref{rec-even} generalizes the result to much more general environments $\boldsymbol{\xi}$ and brings to light the possibility  for the walker to get trapped in some environments, i.e., localization phenomenon occurs, with the walker being locked in the neighborhood of a (random) vertex. We point out that Reinforced random walks also presents  the possibility to be either recurrent or to get trapped depending on the parameters of the model, see for instance \cite{D}.

The name for regime $(i)$ comes from the fact that under (\ref{cond:S}) and $s$ even the \Name{} process propagates recurrence, i.e., it is enough to have a single recurrent vertex and all vertices which are eventually added to the tree inherit recurrence from their parents. 

Although in Theorem~\ref{rec-even} we do not need any further assumption on $T_0$ other than finiteness, we will always assume  that $T_0$ has a self-loop at the $\Root$ when $s$ is even. This indeed makes things more interesting, since it is the case where {the height of the tree may increase}. This will be carefully discussed in Sections~\ref{sec:rec} and \ref{sec:height} (see, Proposition~\ref{prop:height}).

Theorem~\ref{rec-even} tells us that condition~\eqref{cond:S} guarantees the recurrence of every vertex. The next natural question regards the nature (null vs. positive) of this recurrence. 
As it turns out \eqref{cond:S}  alone does not guarantees neither null nor positive recurrence. Take, for instance,  a sequence $\xi_n\sim Ber(n^{-2})$, then the Borel-Cantelli lemma assures that condition~\eqref{cond:S} is satisfied and the tree will almost surely be finite, which implies that all its vertices will be positive recurrent. 
The next proposition provides sufficient condition for null recurrence. 

\begin{proposition}[Null recurrence for $s$ even]\label{prop:null-recurr}
Consider a $(s,\boldsymbol{\xi})-\Name$ process with $s$ even and independent environment $\boldsymbol{\xi}$  satisfying conditions  \eqref{cond:S} and  \eqref{cond:UE}. Then the {\Name{}} is null recurrent. 
\end{proposition}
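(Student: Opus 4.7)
The plan is to combine Theorem~\ref{rec-even}(i) (which handles recurrence under \eqref{cond:S}) with a direct construction showing non-positive-recurrence. By the definition of positive recurrence, it suffices to exhibit a realization $(T_n, x_n)$ and a vertex $z \in V(T_n)$ with $\mathbb{E}_{T_n, x_n, s, \theta_n(\boldsymbol{\xi})}[\eta_z] = \infty$. I would target a leaf $\ell$ of $\Root$. First, I argue that $\deg_{T_n}(\Root) \to \infty$ almost surely: by Theorem~\ref{rec-even}(i) the walker visits $\Root$ infinitely often, and under the self-loop convention for $s$ even (which breaks the bipartite parity constraint) infinitely many of those visits occur at times divisible by $s$. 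Since \eqref{cond:UE} ensures that each such visit adds a new leaf with probability at least $\kappa$, the second Borel--Cantelli lemma, applicable because $\boldsymbol{\xi}$ is independent, yields the unbounded growth of $\deg_{T_n}(\Root)$.

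Having fixed a realization $(T_{n^{*}}, \Root)$ with $n^{*}$ divisible by $s$, and a leaf $\ell$ of $\Root$ present at that time, I would analyze $\mathbb{E}[\eta_\ell]$ through an excursion / P\'olya-urn argument. Let $\tau_0 = 0 < \tau_1 < \tau_2 < \cdots$ be the successive visits to $\Root$ in the restarted process, and let $D_j$ denote the degree of $\Root$ at time $\tau_j$ (after any leaf addition at that step). At the first step of the $j$-th excursion the walker selects $\ell$ with conditional probability $1/D_j$, so if $N$ is the number of excursions required to hit $\ell$ then $\eta_\ell \ge N$ and
\[
\mathbb{E}[N] \;=\; \sum_{K \ge 0} \mathbb{E}\!\left[\prod_{j=0}^{K-1}\bigl(1 - 1/D_j\bigr)\right].
\]
The whole problem is reduced to showing divergence of this series.

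The main obstacle is carefully combining \eqref{cond:UE} and \eqref{cond:S} to force this divergence. A coupling with i.i.d.\ $\mathrm{Bernoulli}(\kappa)$ variables using \eqref{cond:UE} gives the lower bound $D_j \ge \deg_{T_0}(\Root) + \mathrm{Bin}(j,\kappa)$, but this alone yields a product decaying at best like $K^{-1/\kappa}$, which is summable when $\kappa<1$. The decisive use of \eqref{cond:S} is the upper bound $D_j \le \deg_{T_0}(\Root) + c\,g(\tau_j)$ coupled with an at-most-linear-in-$j$ control of $\tau_j$ (which I would obtain by showing that most excursions from $\Root$ are short, using the locally-leaf-dominated structure of the tree and the height estimate in Proposition~\ref{prop:height}). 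Together these force $\sum_j 1/D_j = \infty$ almost surely and, through a careful handling of $\log(1 - 1/D_j)$ and a dominated-convergence argument, the displayed sum diverges. As a backup, should this balance prove too delicate, one can instead pick a vertex at positive height (guaranteed by Proposition~\ref{prop:height}) and exploit the extra backtracking the walker must perform through the growing subtree to bound $\mathbb{E}[\eta_z]$ below by a divergent quantity.
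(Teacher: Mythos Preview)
Your main argument contains a directional error. You want the series $\sum_{K}\mathbb{E}\bigl[\prod_{j=0}^{K-1}(1-1/D_j)\bigr]$ to diverge, which requires the product to decay \emph{slowly}, i.e.\ the $D_j$ to be \emph{large}. Condition \eqref{cond:UE} indeed gives a lower bound on $D_j$, and you correctly observe that this only yields $\prod \gtrsim K^{-1/\kappa}$, summable when $\kappa<1$. You then invoke \eqref{cond:S} to bound $D_j$ from \emph{above} by $c\,g(\tau_j)$. But an upper bound on $D_j$ gives an \emph{upper} bound on $1-1/D_j$ and hence on the product; it cannot help establish divergence of the series. In fact \eqref{cond:S} caps the rate of leaf accumulation and therefore makes the product decay \emph{faster}, not slower, so its role here is the opposite of what you claim. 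In addition, your ``at-most-linear-in-$j$ control of $\tau_j$'' is unjustified and essentially circular: bounding the mean excursion length from $\Root$ is tantamount to the positive-recurrence question you are trying to refute.

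The paper's proof avoids all of this via two ingredients you do not use. Lemma~\ref{lem:infinite-expectation} shows, under \eqref{cond:UE} alone, that there is an $\ell_0$ with $\mathbb{E}_{T_0,x,s,\boldsymbol{\xi}}[\eta_z]=\infty$ whenever $\dist{0}{x}{z}\ge \ell_0$; this is proved through the loop-process coupling and the tail bound of Lemma~\ref{lemma:xlooptime}. Proposition~\ref{prop:height} then shows that under \eqref{cond:S} and \eqref{cond:UE} the height of $T_n$ tends to infinity, so the (recurrent, by Theorem~\ref{rec-even}(i)) walker will eventually sit at distance at least $\ell_0$ from any prescribed vertex $z$. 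The strong Markov property combined with Lemma~\ref{lem:infinite-expectation} finishes the argument in one line. Your backup paragraph gestures in this direction but does not identify Lemma~\ref{lem:infinite-expectation} as the decisive input; ``exploit the extra backtracking through the growing subtree'' is exactly what that lemma makes precise.
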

We have another important result which is Theorem \ref{thm:totti}. We do not state it here to avoid excessive notation in this introduction. The theorem provides  conditions on the parameters for i.i.d. environments that imply  distinct local behaviors for the process. Specifically,  either the exit time from a vertex through a non-leaf neighbor has infinite mean, which immediately implies null recurrence, or it has finite mean and the \Name{} makes transitions between non leaves neighbors in finite mean times almost surely.

\medskip
When $s$ is odd the \Name{} process has a thoroughly different behavior. 

\begin{theorem}[Ballisticity for $s$ odd] \label{theo:ballistic}
If $s$ is odd and $\boldsymbol{\xi}$ is an independent environment satisfying \eqref{cond:UE}, then the {\Name{}} is ballistic. 
\end{theorem}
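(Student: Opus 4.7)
The plan is to obtain ballisticity by establishing a uniform positive drift for the walker's distance to the root, sampled at the leaf-addition times $ks$, and then applying a standard martingale concentration argument. Write $v_k := X_{ks}$ and $D_k := \mathrm{dist}_{T_{ks}}(v_k, \Root)$. A key first observation is that, since every tree is bipartite and $s$ is odd, the parity of the distance from any fixed vertex toggles with each SRW step, so $D_{k+1} - D_k$ is always odd; in particular $|D_{k+1} - D_k| \geq 1$ deterministically. This is where the oddness of $s$ enters decisively: a block of $s$ steps cannot be wasted by returning to $v_k$, which is exactly what the walker would do to be trapped in the even case.

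The heart of the argument is a drift lemma: there exists $c_0 = c_0(s,\kappa) > 0$ such that, for every $k$ and every history,
\[
\mathbb{E}\bigl[D_{k+1} - D_k \,\big|\, \mathcal{F}_{ks}\bigr] \geq c_0.
\]
Under~\eqref{cond:UE}, with probability at least $\kappa$ at least one new leaf is attached to $v_k$ at time $ks$. Conditional on this, one constructs an explicit \emph{route of escape}: a sequence of $s$ moves in $T_{ks+1}$ that bounces the walker between $v_k$ and its new leaves and ends the block at a newly added leaf, giving $D_{k+1} = D_k + 1$. The probability of any fixed bouncing trajectory depends on $\mathrm{deg}_{T_{ks+1}}(v_k)$, which is a priori unbounded, so the cure is to amortize the probability cost across the $\xi_{ks}$ available leaves and across all parity-compatible bouncing patterns of length $s$, eventually yielding a lower bound on the drift that depends only on $s$ and $\kappa$. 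Thus once at least one new leaf exists, the walker has a uniform positive bias toward ending the block one step deeper in the tree.

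Ballisticity then follows from martingale concentration. The process
\[
M_k := D_k - \sum_{j=0}^{k-1} \mathbb{E}\bigl[D_{j+1} - D_j \,\big|\, \mathcal{F}_{js}\bigr]
\]
is a martingale with increments bounded by $s$, so Azuma--Hoeffding combined with Borel--Cantelli yields $M_k = o(k)$ almost surely. Together with the drift lemma this gives $\liminf_k D_k/k \geq c_0$ a.s. Since $|\mathrm{dist}_{T_n}(X_n, \Root) - D_{\lfloor n/s \rfloor}| \leq s$ for all $n$, we conclude $\liminf_n \mathrm{dist}_{T_n}(X_n,\Root)/n \geq c_0/s > 0$, i.e.\ the \Name{} is ballistic.

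The main obstacle is the drift lemma itself. The difficulty is that $\mathrm{deg}_{T_{ks+1}}(v_k)$ is not uniformly bounded, so the probability of any single escape trajectory cannot be lower bounded in a degree-free way; one must exploit the fact that $s$ odd forces every trajectory confined to the star centered at $v_k$ to terminate at one of its leaves, and carefully average over the many such escape patterns. The independence of $\bxi$ is also essential, since it allows conditioning on $\mathcal{F}_{ks}$ without introducing additional bias through correlations in the environment.
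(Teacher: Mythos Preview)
Your drift lemma is false, and this is a genuine gap, not a technicality. Consider $s=3$ and $\xi_n \equiv 1$ (which satisfies \eqref{cond:UE} with $\kappa=1$). Take $v_k$ to be a leaf whose parent $p$ has $N$ other leaf children. After the single new leaf $w$ is attached to $v_k$, a direct computation of the three-step SRW from $v_k$ gives
\[
\mathbb{E}\bigl[D_{k+1}-D_k \,\big|\, \mathcal{F}_{ks}\bigr] \;\le\; -\frac{N}{2(N+1)} \;\xrightarrow[N\to\infty]{}\; -\tfrac12,
\]
because with probability $1/2$ the first step is $v_k\to p$, and then with probability $(N-1)/(N+1)$ the walk goes $p\to \text{leaf}\to p$, ending at distance $D_k-1$. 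The amortization you describe handles the degree of $v_k$ but not the degree of its \emph{parent}; the obstruction lives one level up, and no averaging over bouncing patterns at $v_k$ touches it. Since $T_0$ is arbitrary, this configuration can be the initial data, so no uniform $c_0>0$ exists.

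This is exactly why the paper does not attempt a one-block drift. Instead it works at a mesoscopic scale $r$: condition \eqref{eq:L}, proved via the loop-process coupling of Section~\ref{sec:longtimes}, says that backtracking $r$ levels takes time at least $e^{r^\alpha}$ with high probability; condition \eqref{eq:R}, obtained by a bootstrap $(\mathrm{R})_M\Rightarrow(\mathrm{R})_{M+1/2}$ starting from Lemma~\ref{lemma:Mhalf}, says that within time $e^{r^\alpha}$ the walker advances $2r$ levels with high probability. Together these give a coupling with a biased simple random walk at scale $r$ (Lemma~\ref{lema:couplingsk}), from which ballisticity follows. The oddness of $s$ enters not through a parity constraint on $D_{k+1}-D_k$ but through Lemma~\ref{lemma:parity}, which shows that the walker can always correct its phase within $2s$ steps to push the tree forward---this is what fails for $s$ even and what ultimately drives the bootstrap.
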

In \cite{FIN}, it is proved that the NRWW (\Name{} with $\xi_j \equiv 1$, for every $j$) is transient for~$s=1$. There, it is also conjectured that the NRRW is transient for every~$s$ odd. A first proof of ballisticity was given in \cite{FIORR} for the particular case with~$s=1$ and  the $\xi_j$ i.i.d. Bernoulli random variables. Our Theorem \ref{theo:ballistic} proves the conjecture of \cite{FIN} and  generalizes the result in \cite{FIORR}.

Let us draw a parallel with the classical theory of random walks on random environments. One of the main open problems in the theory of RWRE in $\Z^d$ is whether directional transience is equivalent to directional ballisticity for uniformly elliptic \textit{i.i.d.} environments. The conjecture involves two conditions each of them playing specific roles: the uniform ellipticity, which is a local condition, and the directional transience, which is a global one. The former prevents the walk from getting trapped in substructures of the space, whereas the latter tells us how the walk explores the environment in the long run (the interested reader may read more about ellipticity and ballisticity  in \cite{S}).
Interestingly, Theorem~\ref{theo:ballistic} reveals that
for $\Name{}$ process with $s$ odd,  uniform ellipticity is enough to obtain ballisticity. In other words, in this settings, a local condition is enough to drive the walk away from its initial position at linear speed.

\begin{remark}
As usual we can define continuous versions of the {\Name{}} by considering that the time between the $(n-1)$-th and $n$-th transitions are i.i.d. exponential random variables with parameter $\lambda_n > 0$, $n\ge 1$. It is standard to show that recurrence/transience of the continuous time version follows from the same property for the discrete time version. If $(\lambda_n)_{n\ge 1}$ is bounded away from $0$ and $\infty$, then the same holds for properties like ballisticity and null recurrence. It is also possible to have a null recurrent {\Name{}} that generates a positive recurrent continuous time random walk which happens if $(\lambda_n)_{n\ge 1}$ diverges to infinity sufficiently fast. These derivative results for continuous time do not bring novelty when compared to similar conclusions obtained for standard continuous time random walks.   
\end{remark}

The structure of the paper is the following: In Section~\ref{sec:longtimes} we prove that the time for the  random walk to go from one vertex to other one sufficiently far apart has infinite mean (regardless of the parity of $s$). There we introduce an auxiliary process (called generalized loop-process) which will also play an important role in the proof of ballisticity. 
{In Section~\ref{sec:rec} we prove Theorem \ref{rec-even} and other results for $s$ even, such as Theorem \ref{thm:totti}.  Section~\ref{sec:odd} is devoted to the proof of ballisticity of {\Name{}} for $s$ odd. In Section~\ref{sec:height} we  show that the height of the tree diverges to infinity as time goes to infinity (regardless of the parity of $s$) and we prove Proposition \ref{prop:null-recurr}.} Lastly,  Section \ref{sec:fincom} finishes the paper with a brief discussion on possible extensions of our results.


\section{Long hitting times} \label{sec:longtimes}

In this section we focus on understanding the hitting times of single vertices in the \Name.
More specifically, let us assume that the \Name{} process starts on $(T_0, x_0)$ and that~$z$ denotes a vertex at distance $\ell$ of $x_0$ in $T_0$. Recall the definition of the first hitting time $\eta_z$ from Equation~\eqref{eta}. The aim of this section is to study the distribution of $\eta_z$ and how it depends on $\ell$ in independent environments; the main results are Corollary~\ref{cor:etaybound} and Lemma~\ref{lem:infinite-expectation}.

In order to study $\eta_z$, we shall introduce  a simpler process, called {\em Generalized Loop Process}. 
This simpler process is a generalization of  the Loop process  which was introduced  in \cite{FIORR} in the context of the BGRW, which is the \Name{} for $s=1$ and $\boldsymbol{\xi}$  i.i.d.  sequence of Bernoulli's random variables. Herein, building on the same ideas, we generalized  such a process. 
Some of the results concerning the Generalized loop process are minor variations of the one presented in \cite{FIORR}. 

\subsection{Generalized Loop Process}\label{sec:loopprocess}
Roughly speaking, a generalized loop process on an initial graph $G$ is a random walk such that at each time $t=ms$ adds $\xi_{t}$ loops to its position  and then chooses uniformly one edge of its current position to walk on. Specifically, the number of vertices in the graph stays constant during the evolution of the process and it is equal to the number of vertices in the initial graph. 

Although we can define the loop process over any graph, we will treat only the case in which it is defined over a specific graph called \textit{backbone}. A finite graph $\mathcal{B}$ is a \textit{backbone} of length $\ell$ if~$\mathcal{B}$ is a path of length $\ell$ with a loop attached to its $(\ell + 1)$-th vertex and possibly to remaining vertices, see Figure~\ref{fig:backbone} below.
\begin{figure}[h]
\begin{tikzpicture}

\tikzset{every loop/.style={min distance=2cm,looseness=30}}
\tikzstyle{every state}=[scale=0.2,draw, fill]

\node[state] at (0, 0)   (0) {};
\node at (0, -0.5)   () {$0$};

\node[state] at (2,0)   (1) {};
\node at (2, -0.5)   () {$1$};

\node[state] at (4,0)   (2) {};
\node at (4, -0.5)   () {$2$};

\node[state] at (6,0)   (3) {};
\node at (6, -0.5)   () {$\cdots$};

\node[state] at (8,0)   (4) {};
\node[state] at (10,0)   (5) {};

\node[state] at (12,0)   (6) {};
\node at (12, -0.5)   () {$\ell$};

\draw[thick] (0) -- (1);
\draw[thick] (1) -- (2);
\draw[thick] (2) -- (3);
\draw[thick] (3) -- (4);
\draw[thick] (4) -- (5);
\draw[thick] (5) -- (6);

\draw[thick]   (1) edge [in=30,out=130,loop]  (1);

\draw[thick]   (1) edge [in=50,out=160,loop]  (1);

\draw[thick]   (2) edge [in=50,out=130,loop]  (2);

\draw[thick]   (4) edge [in=70,out=170,loop]  (4);
\draw[thick]   (4) edge [in=50,out=140,loop]  (4);
\draw[thick]   (4) edge [in=10,out=100,loop]  (4);

\draw[thick]   (6) edge [in=30,out=-30,loop]  (6);

\end{tikzpicture}
\caption{A \textit{backbone} of length $\ell$ }
	\label{fig:backbone}
\end{figure}
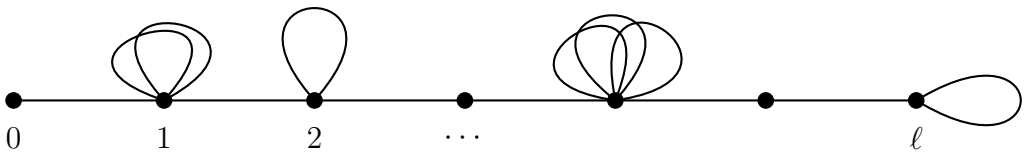

In this section, we denote by $\text{deg}_t(i)$  the number of edges attached to vertex $i$ at time $t$ counting loops only once. 
We refer to this quantity as \textit{degree of a vertex} even though we do not count loops twice.

The process depends on an integer greater zero $s$ and an independent environment process $\boldsymbol{\xi} = \{\xi_{n}\}_{n\in\N}$, which we assume satisfies condition \eqref{cond:UE}. We denote the Generalized loop process by $\{( \mathcal{B}_t ,X^{\text{loop}}_t)\}_{t \in \mathbb{N}}$ where $\mathcal{B}_t$ denotes the backbone at time $t$ and~$X^{\text{loop}}_t$ is one of its~$\ell+1$ vertices. Similarly to the \Name{}, the loop process is defined inductively according to the update rules presented below
\begin{enumerate}
	\item Generate $\mathcal{B}_{t+1}$ from $\mathcal{B}_{t}$ as follows:
	
	if $t= 0\,{\rm mod}\, s$ add   $\xi_{t}$  new loops to $X^{\text{loop}}_{t}$,
	
	if $t\neq 0\,{\rm mod}\, s$,  $\mathcal{B}_{t+1}=\mathcal{B}_{t}$.
	\item Choose uniformly one edge attached to $X^{\text{loop}}_t$ in $\mathcal{B}_{t+1}$. If the chosen edge is a loop, set $X^{\text{loop}}_{t+1}=X^{\text{loop}}_t$. Otherwise, $X^{\text{loop}}_{t+1}$ becomes the chosen $X^{\text{loop}}_t$ neighbor.
\end{enumerate}
Note the index $t+1$ of $\mathcal{B}$ on the rule~(2). This means we may add a loop at rule (1) and then select it at (2).


{Given a  backbone $\mathcal{B}$ of length $\ell$ and $i \in \{0,1,\cdots, \ell\}$ we denote by  $\Pd_{\mathcal{B},i}$ the law of the loop process when~$( \mathcal{B}_0 ,X^{\text{loop}}_0) \equiv (\mathcal{B},i)$, and by $\Ed_{\mathcal{B},i}$ the corresponding expectation. }
We will also let $\eta^{\text{loop}}_0$ be the first time $X^{\text{loop}}$ visits $0$,  i.e.,
\begin{equation*}
\eta^{\text{loop}}_0 := \inf\left \lbrace t \ge 0 \; \middle | \;X^{\text{loop}}_t = 0 \right \rbrace\;.
\end{equation*}
The next result provides  upper bounds for the cumulative distribution of $\eta^{\text{loop}}_0$.{
\begin{lemma}\label{lemma:xlooptime} Let $\boldsymbol{\xi}$ be an independent environment  satisfying condition \eqref{cond:UE}. Then there exist positive constants $c_1$, $c_2$ depending on $s$ and $\boldsymbol{\xi}$ such that, for all integer~$K\geq 1$ and for all $\beta \in (0,1)$
	\[
	\Pd_{\mathcal{B},\ell} \left( \eta^{\text{\rm loop}}_0 \le 
	e^{\beta K}\right) \le \exp\{-c_1K\} +  1-\left(1-\frac{1}{\ell}\right)^{c_2K} + \exp\{-(1-\beta)K\} \;.
	%
%
		\]
\end{lemma}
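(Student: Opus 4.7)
My plan is to use the following structural observation: self-loops do not bias the walker's choice among distinct vertices, since conditional on $X^{\text{loop}}_t$ making a non-loop move, the move is uniform over the non-loop neighbors of its current backbone vertex. Consequently, the non-loop skeleton of $X^{\text{loop}}$ is a simple random walk on the path $\{0, 1, \dots, \ell\}$, reflecting at $\ell$. I would define an \emph{excursion from} $\ell$ as a trajectory segment beginning with the transition $\ell \to \ell - 1$ and ending either at $0$ (success) or upon the first subsequent return to $\ell$ (failure). By gambler's ruin on the path, each excursion succeeds with probability exactly $1/\ell$, and outcomes of distinct excursions are Bernoulli independent (loops alter only waiting times, not transition directions). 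Writing $M_T$ for the number of excursions initiated in $[0, T]$ with $T = e^{\beta K}$, a union bound on the number of excursions gives
\[
\Pd_{\mathcal{B},\ell}\!\big(\eta_0^{\text{loop}} \le T\big) \;\le\; \Pd\!\big(M_T > c_2 K\big) \;+\; \big(1 - (1 - 1/\ell)^{c_2 K}\big),
\]
which accounts for the middle error term. It remains to bound $\Pd(M_T > c_2 K)$ by $\exp\{-c_1 K\} + \exp\{-(1-\beta) K\}$.

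For this, I would study the length $\tau_k$ of the $k$-th visit of $X^{\text{loop}}$ to $\ell$ and the degree $d_k$ of $\ell$ at the start of that visit: $\tau_k$ is geometric with mean $d_k$, and $\{M_T > c_2 K\} \subseteq \{\sum_{k=1}^{c_2 K} \tau_k < T\}$. During the $k$-th visit, every step $t \equiv 0 \pmod s$ adds at least one loop to $\ell$ with probability $\ge \kappa$ by \eqref{cond:UE}, so the expected number of loops added per visit is of order $(\kappa/s) d_k$, driving multiplicative growth $d_{k+1} \ge (1 + c'\kappa/s) d_k$. A Chernoff bound applied to the independent Bernoullis $\Ind{\{\xi_{ks}\ge 1\}}$ shows that this growth holds simultaneously for all $k \le c_2 K$ on an event whose complement has probability at most $\exp\{-c_1 K\}$---the first error term. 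On that good event, choosing $c_2$ so that $c_2 K \log(1 + c'\kappa/s) > K$ gives $d_{c_2 K} \ge e^K$, and since a geometric random variable with mean $\mu$ satisfies $\Pd(\cdot < T) \le T/\mu$, I obtain $\Pd(\tau_{c_2 K} < T) \le T/d_{c_2 K} \le e^{-(1-\beta) K}$, the third error term.

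The hard part will be rigorously establishing the multiplicative degree growth $d_{k+1} \ge (1 + c'\kappa/s) d_k$ uniformly in $k \le c_2 K$. There is a feedback loop: loops at $\ell$ accumulate only while the walker is at $\ell$, but whether the walker stays at $\ell$ is controlled by the already-accumulated loops. Proving that $\tau_k$ is not atypically small (so enough loop-addition opportunities arise during the $k$-th visit) and that these opportunities do produce loops (via \eqref{cond:UE}) requires a joint argument---typically by induction on $k$ with a simultaneous concentration bound---and handling the dependence of successive $\tau_k$ on the earlier loop-addition variables requires careful conditioning on the natural filtration generated by the walker's trajectory and the environment $\boldsymbol{\xi}$.
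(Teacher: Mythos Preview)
Your overall architecture matches the paper's: the skeleton of $X^{\text{loop}}$ is a simple symmetric walk on $\{0,\dots,\ell\}$, the number of excursions from $\ell$ needed to hit $0$ is geometric with parameter $1/\ell$ (yielding the middle term), and the degree of $\ell$ grows multiplicatively with each visit so that eventually a single sojourn at $\ell$ already exceeds $e^{\beta K}$ (yielding the last term). The decomposition $\Pd(\eta_0^{\text{loop}}\le T)\le \Pd(M_T>c_2K)+\bigl(1-(1-1/\ell)^{c_2K}\bigr)$ is a valid rearrangement of the paper's argument.

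There is, however, a genuine gap in the step you flag as hard. You aim to show that $d_{k+1}\ge (1+c'\kappa/s)\,d_k$ holds \emph{simultaneously for all} $k\le c_2K$ except on an event of probability $\le e^{-c_1K}$. This target is too strong: for the very first visit the degree $d_1$ equals $2$, and with probability at least $(1-\kappa)\bigl(1-(1/2)^{s-1}\bigr)$ the walker adds no loop at time $0$ and leaves $\ell$ before time $s$, so $d_2=d_1$. That failure probability is a fixed positive constant, not $e^{-c_1K}$. No amount of induction or conditioning can bring the probability of ``every visit succeeds'' above $1$ minus this constant, so the claimed bound $e^{-c_1K}$ is unreachable. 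Relatedly, the Chernoff bound you propose to apply to $\Ind{\{\xi_{ks}\ge 1\}}$ is indexed by \emph{time}, whereas the growth ratios $d_{k+1}/d_k$ are indexed by \emph{visit}; the two indices do not line up (the $k$-th visit may consume anywhere from zero to many loop-addition opportunities), so this particular Chernoff application does not control what you need.

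The paper's resolution is to relax the target: rather than ask every visit to succeed, it shows that each visit to $\ell$, conditionally on the past, multiplies the degree by $1+\varepsilon\kappa/s$ with probability at least some fixed $q=q(s,\kappa)>0$ (this is the Claim inside the proof, obtained by first asking the geometric sojourn time to exceed $d_k\vee s$ and then applying Chernoff to the $\lfloor G_k/s\rfloor$ loop-addition opportunities in that sojourn). The number $W$ of successful visits among the first $N$ then stochastically dominates $\mathrm{Bin}(N,q)$, and Chernoff on \emph{these} success indicators gives $\Pd(W<K/\log(1+\varepsilon\kappa/s))\le e^{-c_1K}$ once $N\ge c_2K$. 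In your decomposition this means: on $\{M_T>c_2K\}$ you already have $c_2K$ visits, so $W\succeq\mathrm{Bin}(c_2K,q)$, and choosing $c_2$ so that $\tfrac{q}{2}\,c_2K\,\log(1+\varepsilon\kappa/s)\ge K$ gives $d_{c_2K}\ge e^K$ except with probability $\le e^{-c_1K}$. After that your geometric tail bound $\Pd(\tau_{c_2K}<T)\le T/d_{c_2K}\le e^{-(1-\beta)K}$ goes through unchanged.
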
}

\begin{proof}
It will be useful to look to loop process only when it actually moves from its position. Thus, define inductively the following
	\begin{equation*}
	\begin{split}
	& \tau_0 \equiv 0\;, \\
	& \tau_k := \inf\left \lbrace t > \tau_{k-1} \; \middle | \; X^{\text{loop}}_t \neq X^{\text{loop}}_{\tau_{k-1}}\right \rbrace\;.
	\end{split}
	\end{equation*}
We point out that the stopping times $\tau_k$ are not necessarily finite almost surely. If $\tau_k = \infty$ it means that the process gets trapped. As it will be shown in Theorem~\ref{rec-even}  (Section \ref{sec:rec}) under condition \eqref{cond:S} the times  $\tau_k$ are finite almost surely for all $k \ge 1$. However, when this is not the case, we can condition on the event that $X^{\text{loop}}$ is able to reach $0$  to estimate the probability in the statement, since on the complement of this event, we have $\eta_0^{\text{loop}} = \infty$. We leave the details to the reader and, henceforth, we suppose that $\tau_k$ is finite almost surely for all $k \ge 1$. This allows us to define the process
	\begin{equation*}
	Y_k := X^{\text{loop}}_{\tau_k}\;.
	\end{equation*}
	Note that by Strong Markov Property, $\{Y_k\}_k$ is a symmetric random walk on  the segment $[0, \ell] \cap \Z$, with reflecting barriers. We also define another stopping time
	\begin{equation*}
	\sigma :=  \inf\left\{ k > 0 \mid Y_k = 0\right\}\;,
	\end{equation*}
	and notice that $\eta^{\text{loop}}_0 = \tau_{\sigma}$.
	
	The idea of the proof is to show that the degree of vertex $\ell$ at time $\tau_{\sigma}$ is at least $e^K$ \textit{w.h.p.} which,  in turns,  guarantees that $\sum_{j=0}^{\lfloor\tau_\sigma/s\rfloor}\xi_{sj}$, i.e., the number of leaves added up to time $\tau_\sigma$,  is also at least $e^K$. Intuitively, having added an exponential number of leaves makes it harder for the walker to go back.
		In order to keep track of the degree of $\ell$, the following inequality (stochastic domination) will be useful  
	\begin{equation*}
	\mathrm{deg}_{\tau_n}(\ell) = 2+ \sum_{k=0}^{n-1} \mathbbm{1}\{Y_k = \ell\}\left( \sum_{ \substack{ m \in [\tau_k, \tau_{k+1}), \\ m \equiv 0\, {\rm mod}\,s}}\xi_m\right) \succeq  2 + \sum_{k=0}^{n-1} \mathbbm{1}\{Y_k = \ell\}\mathrm{Bin}\left(\left\lfloor \Delta \tau_k/s\right \rfloor, \kappa \right)\;,
	\end{equation*}
	where $\Delta \tau_k := \tau_{k+1} - \tau_k$ and $\kappa$ is the uniform ellipticity constant given by condition (\ref{cond:UE}). Note that when $Y_k = \ell$, the amount of time $X^{\text{loop}}$ takes to leave $\ell$ is $\Delta \tau_k$, which in turns satisfies
	\[
	\Delta \tau_k \succeq \mathrm{Geo}\left(1/\mathrm{deg}_{\tau_k}(\ell)\right)\;.
	\]
	Since the degree is non-decreasing, for any time $t \in [\tau_k, \tau_{k+1}]$, the probability of leaving $\ell$ given the past up to time $t-1$ is at most $1/\mathrm{deg}_{\tau_k}(\ell)$. Thus one may construct a coupling in such way that $\Delta \tau_k$ is greater than a geometric  random variable with parameter~$1/\mathrm{deg}_{\tau_k}(\ell)$. Thus, we have
	\begin{equation*}
	\mathrm{deg}_{\tau_n}(\ell) \succeq \sum_{k=0}^{n-1} \mathbb{1}\{Y_k = \ell\}\mathrm{Bin}\left(\left \lfloor \mathrm{Geo}\left( \frac{1}{\deg_{\tau_k}(\ell)}\right) /s\right \rfloor, \kappa\right)\;.
	\end{equation*}
	In order to avoid clutter, we simplify the notation defining $G_k := \mathrm{Geo}\left( 1/\deg_{\tau_k}(\ell)\right)$ and $d_k := \deg_{\tau_k}(\ell)$. With this notation, we claim that
\begin{claim} For all $\varepsilon \in (0,1)$, there exists a constant $q = q(s,\kappa, \varepsilon) \in (0,1)$ such that
	\begin{equation*}
	\Pd_{\mathcal{B},\ell}\left(Y_k = \ell, \mathrm{Bin}\left( \left \lfloor \frac{G_k}{s} \right \rfloor, \kappa\right) \ge  \varepsilon \kappa \frac{d_k}{s} \;  \middle |\; \mathcal{F}_{\tau_k} \right) \ge q\mathbb{1}\{Y_k = \ell\}\;, \quad \Pd_{\mathcal{B},\ell}-a.s.\;.
	\end{equation*}
\end{claim}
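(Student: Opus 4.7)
The plan is to note that both sides of the inequality vanish on $\{Y_k \neq \ell\}$, so it suffices to work on $\{Y_k = \ell\} \in \mathcal{F}_{\tau_k}$ and bound $\Pd_{\mathcal{B},\ell}(\mathrm{Bin}(\lfloor G_k/s\rfloor,\kappa) \ge \varepsilon\kappa d_k/s \mid \mathcal{F}_{\tau_k})$ below by a positive constant $q = q(s,\kappa,\varepsilon)$. On this event, by the strong Markov property of the loop process at the stopping time $\tau_k$, the variable $G_k$ is, conditionally on $\mathcal{F}_{\tau_k}$, geometric with parameter $1/d_k$, where $d_k := \deg_{\tau_k}(\ell) \ge 2$ is $\mathcal{F}_{\tau_k}$-measurable, and by construction $G_k$ is independent of the $\mathrm{Ber}(\kappa)$ sequence realising the Binomial.

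I would then fix $\delta \in (\varepsilon,1)$, say $\delta = (1+\varepsilon)/2$, and decompose the target event as $\{G_k \ge \delta d_k\} \cap \{\mathrm{Bin}(\lfloor G_k/s\rfloor,\kappa) \ge \varepsilon\kappa d_k/s\}$, controlling the two factors via the tower property. For the first factor, using the explicit geometric tail and the elementary inequality $(1-1/n)^n \ge 1/4$ valid for every integer $n \ge 2$ (with equality at $n=2$),
\[
\Pd_{\mathcal{B},\ell}\bigl(G_k \ge \delta d_k \,\big|\, \mathcal{F}_{\tau_k}\bigr) \;\ge\; (1-1/d_k)^{\delta d_k} \;=\; \bigl((1-1/d_k)^{d_k}\bigr)^{\delta} \;\ge\; (1/4)^{\delta},
\]
uniformly in $d_k \ge 2$. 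On $\{G_k \ge \delta d_k\}$, setting $N := \lfloor G_k/s\rfloor \ge \lfloor \delta d_k/s\rfloor$, the conditional mean $\kappa N$ of the Binomial strictly dominates the target $\varepsilon\kappa d_k/s$ by a factor close to $\delta/\varepsilon > 1$, up to an $O(1)$ correction.

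It then remains to lower bound, uniformly in $d_k \ge 2$, the conditional probability $\Pd_{\mathcal{B},\ell}(\mathrm{Bin}(N,\kappa) \ge \varepsilon\kappa d_k/s \mid G_k, \mathcal{F}_{\tau_k})$ on the event $\{G_k \ge \delta d_k\}$. For $d_k$ larger than a threshold $D_0 = D_0(s,\kappa,\varepsilon)$, a Paley--Zygmund (or Chebyshev) argument applied to $\mathrm{Bin}(N,\kappa)$ does the job, since the target is a strict multiplicative fraction of the mean while the variance is only linear in $N$. For each of the finitely many remaining $d_k \in \{2,\dots,D_0\}$ I would settle the case by an explicit probabilistic computation, for example by further forcing $G_k$ to exceed a sufficiently large deterministic threshold and then requiring at least $\lceil \varepsilon\kappa D_0/s\rceil$ successes in the Binomial; taking the minimum over this finite set yields a positive $c_2 = c_2(s,\kappa,\varepsilon)$. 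Combining the two factors through the tower property gives $q = (1/4)^{\delta}\, c_2$. The main obstacle is precisely this uniformity over all $d_k \ge 2$ in the Binomial step: standard concentration is effective only for large $d_k$, so the small-$d_k$ regime must be handled by hand rather than through a Chernoff-type inequality.
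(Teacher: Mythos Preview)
Your argument is correct, but the paper's proof is considerably more streamlined. Both proofs reduce to the same shape---first force $G_k$ to be large relative to $d_k$, then show the Binomial concentrates---but the paper avoids your case split entirely.

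The paper's key device is to condition on the single event $\{G_k \ge d_k \vee s\}$ rather than on $\{G_k \ge \delta d_k\}$. The extra $\vee s$ guarantees $\lfloor G_k/s\rfloor \ge 1$, so a one-line Chernoff bound applies uniformly in $d_k$: on this event,
\[
\Pd\!\left(\mathrm{Bin}\Big(\Big\lfloor \tfrac{G_k}{s}\Big\rfloor,\kappa\Big) \ge \varepsilon\kappa\tfrac{d_k}{s}\,\middle|\, \mathcal{F}_{\tau_k},G_k\right) \;\ge\; 1 - e^{-\frac{(1-\varepsilon)^2}{2}\kappa\lfloor G_k/s\rfloor} \;\ge\; 1 - e^{-\frac{(1-\varepsilon)^2}{2}\kappa}.
\]
The geometric tail is then bounded as $(1-1/d_k)^{d_k \vee s} \ge (1-1/d_k)^{d_k s} \ge e^{-3s/2}$, yielding the explicit constant $q = (1 - e^{-(1-\varepsilon)^2\kappa/2})\,e^{-3s/2}$. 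There is no separate treatment of small $d_k$ because the $\vee s$ already absorbs it.

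Your route---Paley--Zygmund/Chebyshev for $d_k > D_0$ and explicit computation for $d_k \le D_0$---works, and has the minor advantage of relying only on second-moment tools rather than Chernoff. The cost is that the resulting $q$ is not explicit (it involves a minimum over finitely many cases), and the split itself is the complication you identify as ``the main obstacle''. The paper sidesteps the obstacle rather than overcoming it: conditioning on $\{G_k \ge d_k \vee s\}$ instead of $\{G_k \ge \delta d_k\}$ makes the uniformity over $d_k \ge 2$ automatic.
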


	\begin{claimproof}  By Chernoff bound, we have that
		\begin{equation*}
		\begin{split}
		&\Pd_{\mathcal{B},\ell}\left( Y_k = \ell, \mathrm{Bin}\left(\left \lfloor \frac{G_k}{s} \right \rfloor, \kappa \right) \ge \e \kappa  \frac{d_k}{s},\, G_k \ge  d_k \vee s  \middle | \mathcal{F}_{\tau_k}, G_k\right) \\
		&\ge \left(1- e^{-\frac{(1-\e)^2}{2}\kappa \left \lfloor\frac{G_k}{s}\right \rfloor}\right)\mathbb{1}\{Y_k = \ell, G_k \ge d_k \vee s  \} \\
		& \ge \left(1- e^{-\frac{(1-\e)^2}{2} \kappa }\right)\mathbb{1}\{Y_k = \ell, G_k \ge d_k \vee s \}\;.
		\end{split}
		\end{equation*}
		 Taking the conditional expectation with respect to $\mathcal{F}_{\tau_k}$ on the above inequality yields
		\begin{equation*}
		\begin{split}
		&\Pd_{\mathcal{B},\ell}\left( Y_k = \ell, \mathrm{Bin}\left(\left \lfloor \frac{G_k}{s} \right \rfloor, \kappa \right) \ge \e \kappa  \frac{d_k}{s}, G_k \ge  d_k \vee s \middle | \mathcal{F}_{\tau_k}\right)\\
		& \ge \left(1- e^{-\frac{(1-\e)^2}{2}\kappa}\right)\Pd_{\ell} \left( Y_k = \ell, G_k \ge d_{k} \vee s\middle | \mathcal{F}_{\tau_k} \right) \\
		& \ge \left(1- e^{-\frac{(1-\e)^2}{2}\kappa}\right)\left(1-\frac{1}{d_k}\right)^{d_k \vee s}\mathbb{1}\{Y_k = \ell\}\\
& \ge \left(1- e^{-\frac{(1-\e)^2}{2}\kappa}\right)\left(1-\frac{1}{d_k}\right)^{d_k  s}\mathbb{1}\{Y_k = \ell\}\\
		& \ge \underset{q(s,\kappa,\e)}{\underbrace{\left(1- e^{-\frac{(1-\e)^2}{2}\kappa}\right) e^{-\frac{3}{2}s}}}\mathbb{1}\{Y_k = \ell\}\;,
		\end{split}
		\end{equation*}
{where in the last inequality we used that $(1-\frac{1}{x})^x\geq e^{-\frac{3}{2}}$, $\forall x \geq 2$, and that  $d_k$ is greater than $2$ for all $k$.} 	
	\end{claimproof}
	
	By the claim, conditionally on the past, each time the process $\{Y_k\}_k$ visits $\ell$ it has a bounded away from zero probability of leaving $\ell$ with degree multiplied by $1+\e \kappa/s$. Thus, the degree of $\ell$ must be at least exponential in the number of visits of $\{Y_k\}_k$ to  $\ell$. 
		Now we are left to control the number of visits to $\ell$ by $Y$. To do this, let $N_{\sigma}(\ell)$ be the number of visits made by $Y$ to $\ell$ before time $\sigma$. Recall that $Y$ is a symmetric random walk on $\{0,1,\cdots,\ell\}$, thus $N_{\sigma}(\ell) \sim {\rm Geo}(1/{\ell})$. Moreover, the random variable $W$ that counts how many times we have successfully multiplied the degree of $\ell$ by $1+\e \kappa/s$ may be written as follows	
	\begin{equation*}
	W := \sum_{k=0}^{\sigma}\mathbb{1}\{Y_k = \ell\}\mathbb{1}\left\{\mathrm{Bin}\left( \left \lfloor \frac{G_k}{s} \right \rfloor, \kappa\right) \ge  \varepsilon \kappa \frac{d_k}{s}\right\}\;,
	\end{equation*}
	and dominates a random variable distributed as $\text{Bin}(N_{\sigma}(\ell),q)$. Thus, for any $K\ge 0$ 
	\begin{equation}\label{ineq:w}
	\begin{split}
	\Pd_{\mathcal{B},\ell}\left(W \le \frac{K}{\log(1+\e \kappa/s)}\right) & \le \Pd_{\mathcal{B}, \ell}\left(\mathrm{Bin}(N_{\sigma}(\ell),q) \le \frac{K}{\log(1+\e \kappa/s)}\right) \\
	& \le \Pd_{\mathcal{B},\ell}\left(\mathrm{Bin}(N_{\sigma}(\ell),q) \le \frac{K}{\log(1+\e \kappa/s)}\; \middle |\; N_{\sigma}(\ell) > \frac{2K}{q\log(1+\e \kappa/s)} \right) \\
	& + \Pd_{\mathcal{B},\ell}\left(N_{\sigma}(\ell) \leq 2q^{-1}K/\log(1+\e \kappa/s)\right) \\
	& \le \exp\{-c_1K\} + 1-\left(1-\frac{1}{\ell}\right)^{c_2K},
	\end{split}
	\end{equation}
with $c_2= \frac{2}{q \log(1+\e\kappa/s)}$ and $c_1=\frac{1}{4\log(1+\e\kappa/s)}$ obtained using Chernoff bound. 
{
Finally, observe that if~$W \ge K/\log(1+\e \kappa/s)$, then $\mathrm{deg}_{\tau_{\sigma}}(\ell) \ge 2e^{K}$. Thus, \eqref{ineq:w} yields
\begin{equation}\label{ineq:degless}
\Pd_{\mathcal{B}, \ell} \left( \mathrm{deg}_{\tau_{\sigma}}(\ell) \le 2e^{K}\right) \le \exp\{-c_1K\} + 1-\left(1-\frac{1}{\ell}\right)^{c_2K}.
\end{equation}
Given $K$, let $\eta_*$ be the following stopping time:
\begin{equation*}
\eta_* := \inf \left \lbrace n \ge 0 \; : \;  \mathrm{deg}_n(\ell) \ge 2e^{K}\right \rbrace\;.
\end{equation*}

By the Strong Markov Property applied to the loop process it follows that for any $\beta \in (0,1)$
\begin{equation}\label{ineq:etatau}
	\begin{split}
		\Pd_{\mathcal{B}, \ell}\left( \tau_\sigma \le e^{\beta K}, \eta_* < \tau_\sigma \right) & \le \Ed_{\mathcal{B}, \ell }\left(\mathbb{1}\{ \eta_* < \tau_\sigma\}\Pd_{\mathcal{B}_{\eta_*}, \ell }\left( \tau_\sigma \le e^{\beta K}\right)\right) \;.
	\end{split}
\end{equation}
In the event $\{\eta_* <\tau_{\sigma}\}$, the degree of vertex $\ell$ in the backbone $\mathcal{B}_{\eta_*}$ is at least~$2e^{K}$ and under such condition $\tau_{\sigma}$ dominates a geometric  random variable of parameter~$1/(2e^{K})$ which corresponds to  the time needed for the walker to visit vertex $\ell -1$. Thus, we obtain 
\begin{equation}\label{ineq:etatau2}
 \Ed_{\mathcal{B}, \ell }\left(\mathbb{1}\{ \eta_* < \tau_\sigma\}\Pd_{\mathcal{B}_{\eta_*}, \ell }\left( \tau_\sigma \le e^{\beta K}\right)\right) 
\le 1 - \left(1- \frac{1}{2e^{K}}\right)^{e^{\beta K}} \le \exp\{-(1-\beta)K\}\;.
\end{equation}
Finally using \eqref{ineq:degless},  \eqref{ineq:etatau} and \eqref{ineq:etatau2}  we obtain that
\begin{equation*}
	\begin{split}
		\Pd_{\mathcal{B}, \ell}\left( \tau_\sigma \le e^{\beta K} \right) & \le \Pd_{\mathcal{B}, \ell}\left( \tau_\sigma \le e^{\beta K}, \mathrm{deg}_{\tau_{\sigma}}(\ell) \ge 2e^{K} \right) + \Pd_{\mathcal{B}, \ell}\left(  \mathrm{deg}_{\tau_{\sigma}}(\ell) \le 2e^{K} \right) \\
		& \le \exp\{-(1-\beta)K\} + \exp\{-c_1K\} + 1-\left(1-\frac{1}{\ell}\right)^{c_2K}\;,
	\end{split}
\end{equation*}
  proving the lemma.
}
%
\end{proof}
The next proposition tells us that we may couple the \Name{} and the Generalized Loop Process, GLP,  in such way that $\eta_{z}$ is greater than $\eta^{\text{loop}}_0$ almost surely. 
The proposition is a mere generalization of Proposition~4.4 in \cite{FIORR} and its proof is in line with the one given therein  and thence will be omitted. The reader may check it by just replacing $s=1$ by any~$s$ and taking into account that the environment process may be capable of adding more than one leaf at once. 

\begin{proposition}[Coupling $\Name{}$ and the GLP; Proposition 4.4 in \cite{FIORR}]\label{prop:coupling} Let $T_0$ be a rooted locally finite tree, $x_0$ one of its vertices different from the root and $z$ an ancestor of $x_0$ at distance at least 2 from  $x_0$. Then, there exists a coupling of $\{(T_n,X_n)\}_{n\in\N}$ starting from~$(T_0,x_0)$ and a generalized loop process $\{(\mathcal{B}_n,X^{\text{\rm loop}}_n)\}_{n\in\N}$ starting from~$(\mathcal{B}(T_0,z,x_0),x_0)$ such that
	\[
	\Pd\left(\eta_{z}\ge \eta^{\text{\rm loop}}_0\right) = 1\;,
	\]
where $\mathcal{B}(T_0,z,x_0)$ is the backbone obtained from $(T_0,x_0)$ by the following procedure: i) remove all vertices in $T_0$ at distance greater than $2$ from the unique path connecting $x_0$ to $z$; ii) identify the remaining  vertices at distance one from the path with their neighbors on the path and consequently, all remaining edges not belonging to the path turn into loops.
\end{proposition}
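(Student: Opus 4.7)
The plan is to construct an explicit coupling of the TBRW $(T_n,X_n)$ and a generalized loop process $(\mathcal B_m,X^{\text{\rm loop}}_m)$ on the same probability space, under which the loop process is obtained as a projection of the TBRW onto the path from $x_0$ to $z$, with loop-process time re-indexed by the subsequence of TBRW-times at which the walker is on this path.

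Let $v_0=z,\,v_1,\dots,\,v_\ell=x_0$ denote the vertices of the unique path from $z$ to $x_0$ in $T_0$, and for any $u\in\bigcup_n V(T_n)$ let $\pi(u)\in\{v_0,\dots,v_\ell\}$ be the vertex of this path closest to $u$ in the current tree. Define $\tau_0=0$ and inductively $\tau_{j+1}:=\inf\{n>\tau_j\,:\,X_n\in\{v_0,\dots,v_\ell\}\}$, so $\tau_j$ is the $j$-th time the TBRW is on the path. The coupling is built so that two invariants hold at every $j$: (a) $X^{\text{\rm loop}}_j=X_{\tau_j}$, and (b) for every $k$, the number of loops at $v_k$ in $\mathcal B_j$ equals the number of off-path edges incident to $v_k$ in $T_{\tau_j}$. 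Invariant (b) at $j=0$ holds by the very definition of $\mathcal B(T_0,z,x_0)$.

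At the walk-update step $\tau_j$ with $X_{\tau_j}=v_k$, invariant (b) guarantees that $v_k$ has the same total edge count in $T_{\tau_j+1}$ and in $\mathcal B_{j+1}$, so the two uniform edge selections can be coupled by pairing the (at most two) path edges of $v_k$ with the path edges of $\mathcal B_{j+1}$ and the off-path edges of $v_k$ in $T_{\tau_j+1}$ bijectively with the loops of $v_k$ in $\mathcal B_{j+1}$. If the TBRW picks a path edge, the loop process picks the matching path edge and $\tau_{j+1}=\tau_j+1$; if the TBRW picks an off-path edge, the loop process traverses the matched loop while the TBRW begins an off-path excursion ending, by definition, at time $\tau_{j+1}$. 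Between $\tau_j$ and $\tau_{j+1}$ the loop process is frozen, and all new leaves added to the TBRW during this interval are aggregated into loops added at $v_k$ by the loop process at time $j+1$, which preserves (b) by construction.

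Once the coupling is in place, the conclusion is immediate: at the first time $\eta_z$ one has $X_{\eta_z}=z=v_0$, so $\eta_z=\tau_J$ for some $J$, and invariant (a) gives $X^{\text{\rm loop}}_J=v_0$; since trivially $\tau_J\ge J$ we obtain $\eta^{\text{\rm loop}}_0\le J\le\tau_J=\eta_z$ almost surely. The main obstacle is that the aggregated environment $\tilde\xi_j:=\sum_{\tau_j\le n<\tau_{j+1},\,n\equiv 0\,(\mathrm{mod}\,s)}\xi_n$ used by the loop process on this coupling is not the original environment $\boldsymbol\xi$ but a stochastically larger one that is also not independent; one must check that Lemma~\ref{lemma:xlooptime} still applies to it. The resolution is a monotonicity observation: additional loops at a vertex can only delay the loop process, so $\eta^{\text{\rm loop}}_0$ computed under $\tilde{\boldsymbol\xi}$ stochastically dominates the one computed under a genuine i.i.d.\ or independent environment that satisfies~\eqref{cond:UE}, and so the upper bound on $\Pd(\eta^{\text{\rm loop}}_0\le e^{\beta K})$ of Lemma~\ref{lemma:xlooptime} continues to hold. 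The detailed book-keeping mirrors exactly the proof of Proposition~4.4 in~\cite{FIORR}, where the case $s=1$ and i.i.d.\ Bernoulli $\boldsymbol\xi$ is worked out; the modifications for general $s\ge 1$ and general independent $\boldsymbol\xi$ amount to replacing ``one leaf per tree-update step'' by ``$\xi_n$ leaves per tree-update step'' throughout.
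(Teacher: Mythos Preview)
Your approach—coupling the loop process to the TBRW via its successive visits to the path $v_0,\dots,v_\ell$—is exactly the construction the paper intends; the paper itself omits the proof entirely and refers to~\cite{FIORR}, noting only that the changes from $s=1$ to general $s$ and from Bernoulli to general $\boldsymbol\xi$ are routine.

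That said, there is an internal inconsistency in your write-up. You assert that aggregating into loops at $v_k$ \emph{all} leaves the TBRW adds during an off-path excursion ``preserves (b) by construction.'' It does not: leaves attached during the excursion are attached to off-path vertices and are therefore \emph{not} incident to $v_k$, so your aggregation strictly inflates the loop count at $v_k$ while the number of off-path edges incident to $v_k$ in $T_{\tau_{j+1}}$ stays the same as in $T_{\tau_j+1}$. The equality in (b) then fails, and with it the degree-matching that lets you couple the two uniform edge selections. The fix is simply to drop the aggregation: add $\xi_{\tau_j}\mathbb{1}\{\tau_j\equiv 0\bmod s\}$ loops at $v_k$ at loop-time $j$, and nothing for the leaves created off-path. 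Then (b) holds exactly at every $j$, the coupled edge selection is well defined, and your conclusion $\eta_z=\tau_J\ge J$ goes through.

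You are right that the resulting process is not literally the generalized loop process of Section~\ref{sec:loopprocess}, because its loop-addition schedule is the random set $\{j:\tau_j\equiv 0\bmod s\}$ with environment read at indices $\tau_j$. Your monotonicity remark points in the right direction but is phrased for the wrong comparison: after the correction above the coupled process can add \emph{fewer} loops per unit loop-time than the standard one (excursions may swallow many $\bmod\,s$ instants), not more. The robust observation is that the only downstream use of the proposition is Corollary~\ref{cor:etaybound}, and the proof of Lemma~\ref{lemma:xlooptime} uses nothing about the law of $\boldsymbol\xi$ or the precise schedule beyond condition~\eqref{cond:UE}; checking that the degree-growth argument there goes through for your coupled process (rather than invoking a black-box stochastic comparison) is what is actually needed.
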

We combine the bound given by Lemma \ref{lemma:xlooptime} with the above proposition to obtain an upper bound for the cumulative distribution of $\eta_{z}$ for a far enough $z$.
\begin{corollary}\label{cor:etaybound} Consider a $\Name{}$ started at $(T_0,x_0)$, with $T_0$ a rooted tree, $x_0$  a vertex different from the root (at distance at least $\ell$ from the root) and let $z$ be the ancestor of $x_0$ at distance $\ell$. Moreover, assume that $\boldsymbol{\xi}$ satisfies condition \eqref{cond:UE}.  Then,  there exists a positive constant $C$ depending on $s$ and $\boldsymbol{\xi}$ only, such that
	\[
	\Pd_{T_0,x_0,s, \boldsymbol{\xi}} \left( \eta_{z} \le e^{\sqrt{\ell}}\right) \le \frac{C}{\sqrt{\ell}}\;.
	\]
\end{corollary}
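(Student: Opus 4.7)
The plan is to combine Proposition~\ref{prop:coupling} with Lemma~\ref{lemma:xlooptime}, choosing the parameters $K$ and $\beta$ so that $e^{\beta K}$ matches the target time-scale $e^{\sqrt{\ell}}$, and then showing that each of the three terms on the right-hand side of Lemma~\ref{lemma:xlooptime} is $O(1/\sqrt{\ell})$. The statement is vacuous for $\ell \in \{0,1\}$ (the bound can be forced by enlarging $C$), so from now on assume $\ell \ge 2$, which also lets us invoke Proposition~\ref{prop:coupling}.

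First, by Proposition~\ref{prop:coupling}, we couple $\{(T_n,X_n)\}_{n\in\N}$ starting from $(T_0,x_0)$ with a generalized loop process $\{(\mathcal{B}_n,X^{\text{loop}}_n)\}_{n\in\N}$ on the backbone $\mathcal{B}=\mathcal{B}(T_0,z,x_0)$ of length $\ell$, started at the vertex corresponding to $x_0$ (namely, vertex $\ell$). The coupling gives $\eta_z \ge \eta^{\text{loop}}_0$ almost surely, hence
\[
\Pd_{T_0,x_0,s,\boldsymbol{\xi}}\left( \eta_z \le e^{\sqrt{\ell}} \right) \le \Pd_{\mathcal{B},\ell}\left( \eta^{\text{loop}}_0 \le e^{\sqrt{\ell}} \right).
\]

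Second, set $K = \lceil 2\sqrt{\ell}\,\rceil$ and $\beta = \sqrt{\ell}/K \in (0,1/2]$ so that $e^{\beta K} = e^{\sqrt{\ell}}$, and apply Lemma~\ref{lemma:xlooptime}:
\[
\Pd_{\mathcal{B},\ell}\left( \eta^{\text{loop}}_0 \le e^{\sqrt{\ell}} \right)
\;\le\; e^{-c_1 K} \;+\; \left[1-\Big(1-\tfrac{1}{\ell}\Big)^{c_2 K}\right] \;+\; e^{-(1-\beta)K}.
\]
Since $K \ge 2\sqrt{\ell}$ and $1-\beta \ge 1/2$, the first and third terms are bounded by $e^{-2c_1\sqrt{\ell}}$ and $e^{-\sqrt{\ell}}$ respectively, both of which are $o(1/\sqrt{\ell})$. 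For the middle term, apply Bernoulli's inequality to get
\[
1-\Big(1-\tfrac{1}{\ell}\Big)^{c_2 K} \;\le\; \frac{c_2 K}{\ell} \;\le\; \frac{c_2 (2\sqrt{\ell}+1)}{\ell} \;\le\; \frac{3 c_2}{\sqrt{\ell}}
\]
for $\ell$ large enough. Summing the three contributions yields the corollary with some explicit constant $C$ depending only on $c_1,c_2$, hence only on $s$ and $\boldsymbol{\xi}$.

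The argument is a direct reduction to Lemma~\ref{lemma:xlooptime}, so there is no genuine obstacle; the only delicate point is the parameter balancing. Taking $\beta$ too close to $1$ would blow up the middle term (since then $K$ is small and we lose the $1/\sqrt{\ell}$ rate via the Bernoulli bound), while taking $\beta$ too close to $0$ forces $K$ to grow, which makes $c_2 K/\ell$ the dominant quantity again through a different mechanism. The choice $\beta \approx 1/2$ (equivalently $K \approx 2\sqrt{\ell}$) makes all three terms simultaneously $O(1/\sqrt{\ell})$, with the middle one being the bottleneck.
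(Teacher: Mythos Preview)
Your proof is correct and follows essentially the same approach as the paper: apply the coupling of Proposition~\ref{prop:coupling} to reduce to the loop process, then invoke Lemma~\ref{lemma:xlooptime} with $K\approx 2\sqrt{\ell}$ and $\beta\approx 1/2$, so that $e^{\beta K}=e^{\sqrt{\ell}}$ and all three error terms are $O(1/\sqrt{\ell})$. You are in fact slightly more careful than the paper in handling the integrality of $K$ and in bounding each term explicitly.
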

\begin{proof} By choosing $K = 2\sqrt{\ell}$ and $\beta = 1/2$ on Lemma \ref{lemma:xlooptime} we obtain that under condition (\ref{cond:UE}) it holds
		\[
	\Pd_{\mathcal{B}, \ell } \left( \eta^{\text{loop}}_0 \le e^{\sqrt{\ell}}\right) \le \frac{C}{\sqrt{\ell}}\;,
	\]
for some positive $C$ depending on the environment process and $s$ only. Finally, using the coupling given by Proposition~\ref{prop:coupling} the result follows.
\end{proof}

\subsection{Infinite expectation for the hitting time of far away vertices}
In this section, building on the previous (specifically on Lemma~\ref{lemma:xlooptime})  we prove that the hitting time of sufficiently far away vertices has an infinite expectation.
 This immediately implies  that the \Name{} process is either transient or null-recurrent.

\begin{lemma}\label{lem:infinite-expectation}
Let $\boldsymbol{\xi}$ be an independent environment  satisfying condition \eqref{cond:UE} and
$\{(T_n,X_n)\}_{n \in \N}$ a $(\boldsymbol{\xi},s)$-\Name{}. Then there exists $\ell_0=\ell_0(s, \kappa)$ such that
	\[
\mathbb{E}_{T_0,x, s, \xi}\left(\eta_z\right)=\infty\;,
	\]
	for all vertices $x, z$ such that $\dist{0}{x}{z}\geq \ell_0$.
\end{lemma}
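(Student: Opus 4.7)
The plan is to reduce the statement to an analogous estimate for the Generalized Loop Process, and then integrate the tail bound of Lemma~\ref{lemma:xlooptime} to deduce infinite expectation. Given $x, z$ with $\dist{0}{x}{z} = \ell \ge \ell_0 \ge 2$, I would root $T_0$ at $z$, so that $z$ is an ancestor of $x$ at distance $\ell$ and $x$ differs from the root. The hypothesis of Proposition~\ref{prop:coupling} is then satisfied, and the proposition provides a coupling of the \Name{} started at $(T_0, x)$ with a Generalized Loop Process on $\mathcal{B}(T_0, z, x)$ started at the end of the backbone such that $\eta_z \ge \eta^{\text{loop}}_0$ almost surely. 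Consequently $\mathbb{E}_{T_0, x, s, \boldsymbol{\xi}}(\eta_z) \ge \mathbb{E}_{\mathcal{B}, \ell}(\eta^{\text{loop}}_0)$, and it suffices to show that the right-hand side is infinite for $\ell$ large enough.

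Let $c_1, c_2>0$ be the constants provided by Lemma~\ref{lemma:xlooptime}. I would fix any $\beta \in (0, \min(c_1, 1/2))$ and use the lemma to write, for every integer $K \ge 1$,
\[
\Pd_{\mathcal{B}, \ell}\left(\eta^{\text{loop}}_0 > e^{\beta K}\right) \ge \left(1 - \tfrac{1}{\ell}\right)^{c_2 K} - e^{-c_1 K} - e^{-(1-\beta)K}.
\]
Lower-bounding the expectation via $\mathbb{E}(\eta^{\text{loop}}_0) = \sum_n \Pd(\eta^{\text{loop}}_0 > n)$ and grouping the integers $n$ into blocks of the form $[\lfloor e^{\beta(K-1)} \rfloor + 1, \lfloor e^{\beta K}\rfloor]$ yields
\[
\mathbb{E}_{\mathcal{B}, \ell}(\eta^{\text{loop}}_0) \ge (1 - e^{-\beta}) \sum_{K \ge K_0} e^{\beta K}\,\Pd_{\mathcal{B}, \ell}\left(\eta^{\text{loop}}_0 > e^{\beta K}\right)
\]
for some $K_0$ large enough to absorb the floor corrections. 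Plugging in the tail bound splits the sum into three pieces. The two subtracted series $\sum_K e^{(\beta - c_1)K}$ and $\sum_K e^{(2\beta - 1)K}$ are geometric with ratios strictly less than one (by the choice of $\beta$), hence finite. The principal series $\sum_K \exp\!\bigl(K(\beta + c_2 \log(1 - 1/\ell))\bigr)$ diverges precisely when $\beta + c_2 \log(1 - 1/\ell) > 0$, equivalently when $\ell > (1 - e^{-\beta/c_2})^{-1}$. Setting $\ell_0 := \lceil (1 - e^{-\beta/c_2})^{-1} \rceil + 1$, which depends only on $s$ and $\kappa$ through $c_1, c_2$, the principal piece dominates and we conclude $\mathbb{E}_{T_0, x, s, \boldsymbol{\xi}}(\eta_z) = \infty$ for every $\ell \ge \ell_0$.

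The main technical point lies in the choice of $\beta$: it must be strictly smaller than both $c_1$ and $1/2$ to make the two subtracted geometric series convergent, while being strictly positive so that the critical threshold $(1 - e^{-\beta/c_2})^{-1}$ is a finite quantity. Since the interval $(0,\min(c_1,1/2))$ is nonempty, such a $\beta$ always exists, and this is precisely what allows $\ell_0 = \ell_0(s,\kappa)$ to be defined. The delicate balance is therefore the interplay between the lemma's main term $(1-1/\ell)^{c_2 K}$, whose decay rate in $K$ is controlled by $1/\ell$, and the exponential growth $e^{\beta K}$ coming from the tail integration; infinite expectation is obtained as soon as $\ell$ is large enough for the former to beat the latter.
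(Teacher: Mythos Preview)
Your argument is correct and follows essentially the same route as the paper: reduce to the loop process via Proposition~\ref{prop:coupling}, then integrate the tail bound of Lemma~\ref{lemma:xlooptime} by grouping into exponential blocks and showing the principal series diverges for $\ell$ large while the two subtracted geometric series converge. The only cosmetic difference is in the parameter tuning: the paper fixes the block exponent at $1/3$ and then goes back into the proof of Lemma~\ref{lemma:xlooptime} to choose the internal $\varepsilon$ so that $c_1>1/3$, whereas you treat $c_1,c_2$ as given and instead pick $\beta\in(0,\min(c_1,1/2))$; your version is arguably cleaner since it uses Lemma~\ref{lemma:xlooptime} as a black box.
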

\begin{proof} The result will follow from our upper bound for the cumulative distribution of $\eta_{z}$ given in Corollary \ref{cor:etaybound}.
{
\begin{align*}
\mathbb{E}_{T_0,x, s, \xi}\left(\eta_z\right)&= \sum_{k=0}^\infty\Pd_{T_0,x, s, \xi}(\eta_z\geq k )\geq 
(e^{\frac{1}{3}} -1) 
\sum_{m=0}^\infty \Pd_{T_0,x}(\eta_y\geq e^{m/3 } ) e^{ (m-1)/3}
\\
&\geq (e^{\frac{1}{3}} -1)
e^{-1/3}\sum_{m=0}^\infty e^{ m/3 }
\left(
\left(1-\frac{1}{\ell}\right)^{c_2m} - e^{-c_1m} - e^{-2/3m}\right)
\\
&= (e^{\frac{1}{3}} -1)
e^{-1/3}\left( 
\sum_{m=0}^\infty e^{ m/3 }
\left(1-\frac{1}{\ell}\right)^{c_2 m}  - 
\sum_{m=0}^\infty e^{m/3} e^{-c_1m} - 
\sum_{m=0}^\infty e^{ m/3 } e^{-2m/3}
\right)\;,
\end{align*}
}
where in the last inequality we used Lemma~\ref{lemma:xlooptime} and the coupling in Proposition~\ref{prop:coupling}. Clearly the last summation on the right-hand side converges and we can ignore this term. As regards  the second summation, recalling from Lemma~\ref{lemma:xlooptime} that $c_1=\frac{1}{4\log(1+\e\kappa/s)}$ with $\e \in (0,1)$ arbitrary, we can choose $\e$ sufficiently small (depending on $s$ and $\kappa$) such that $c_1>1/3$. This guarantees that the second summation also converges. In order to prove the claim we are left with showing that the first summation diverges. 
{
Using that $(1-\frac{1}{\ell})^{\ell}\geq e^{-3/2}$ for $\ell\geq 2$
we obtain that
\begin{align*}
\sum_{m=0}^\infty e^{ m/3 }
\left(1-\frac{1}{\ell}\right)^{c_2 m}\geq \sum_{m=0}^\infty e^{m \left(\frac{1}{3} - \frac{3c_2}{2\ell} \right) }\;.
\end{align*}
}
Recall (from Lemma~\ref{lemma:xlooptime}) that $c_2= \frac{2}{q \log(1+\e\kappa/s)}$; thus, given $s, \e, \kappa$ and $q$ it is possible to choose~$\ell$ sufficiently large such that $1/3-3c_2/2\ell>0$, which implies that the summation diverges.  
\end{proof}


\section{The case $s$ even: Recurrence vs getting Trapped  } \label{sec:rec}

In this section we study the \Name{} when the step parameter is even for finite initial trees.  We shall assume that the initial tree $T_0$ (finite) has a particular vertex $r$, called the {\em root} of the tree,  which is the {\em unique} vertex with a self-loop.

As discussed in \cite{FIN}, the self-loop  at the root plays a prominent role in \Name{} when $s$ is even. Let us recall a few concepts to better understand the impact of  this local feature at the root.  
Let the level of a vertex be its distance (graph distance) from the root and define the level of the walker at time $n$ as $\dist{n}{X_n}{ {\rm root}}$. We say that the walker {$X$} at time $n$ is even (resp., odd) if $\dist{n}{X_n}{ {\rm root}}+n$ is even (resp., odd). Note that,  whenever the walker is even (resp., odd) new leaves can only be added to vertices with  even (resp., odd) levels.
In order for new leaves to be added to vertices whose levels have different parity, the walker must ``change its parity''. As it turns out, the walker can chance its parity only if it traverses the self-loop; indeed, this is the only case in which the distance from the root stays constant and the time increases by one (see, Figure~\ref{fig:walker-parity}). 
\begin{figure}[h]
\begin{tikzpicture}

\tikzset{every loop/.style={min distance=10mm,looseness=10}}
\tikzstyle{every state}=[scale=0.3,draw, fill]

\node at (-6, 0)   (a) {(0)};
\node at (-6, -1)   (b) {(1)};
\node at (-6, -2)   (c) {(2)};
\node at (-6, -3)   (d) {(3)};
\node at (-6, -4)   (e) {(4)};

\node at (6, 0)   (A) {} ;
\node at (6, -1)   (B) {};
\node at (6, -2)   (C) {};
\node at (6, -3)   (D) {};
\node at (6, -4)   (E) {};

\draw[thin, dotted] (a) -- (A);
\draw[thin, dotted] (b) -- (B);
\draw[thin, dotted] (c) -- (C);
\draw[thin, dotted] (d) -- (D);
\draw[thin, dotted] (e) -- (E);

\node[state,gray] at (0, 0)   (1) {};
\node[scale=0.4,draw, fill, black,circle] at (0, 0)  {};
\node at (0.8, 0.4)   () {root};
\node[state, gray] at (-2, -1)   (2) {};
\node[state, gray] at (0, -1)   (3) {};
\node[state, gray] at (2, -1)   (4) {};
\node[state] at (-3, -2)   (5) {};
\node[state] at (-1.5, -2)   (6) {};
\node[state] at (0, -2)   (7) {};
\node[state] at (1.5, -2)   (8) {};
\node[state] at (2.5, -2)   (9) {};
\node[state] at (3.5, -2)   (10) {};
\node[state, gray] at (-4, -3)   (11) {};
\node[state, gray] at (-2.5, -3)   (12) {};
\node[state, gray] at (-1, -3)   (13) {};
\node[state, gray] at (1, -3)   (14) {};
\node[state, gray] at (2.5, -3)   (15) {};
\node[state, gray] at (3.5, -3)   (16) {};
\node[state, gray] at (4.5, -3)   (17) {};
\node[state] at (-2.5, -4)   (18) {};
\node[state] at (-1.5, -4)   (19) {};
\node[state] at (-0.5, -4)   (20) {};
\node[state] at (1, -4)   (21) {};
\node[state] at (2.5, -4)   (22) {};
\node[state, red, rectangle] at (3.5, -4)   (23) {};


\draw[thick]   (1) edge [in=50,out=130,loop]  (1);

\draw[thick] (1) -- (2);
\draw[thick] (1) -- (3);
\draw[thick] (1) -- (4);
\draw[thick] (2) -- (5);
\draw[thick] (3) -- (6);
\draw[thick] (3) -- (7);
\draw[thick] (3) -- (8);
\draw[thick] (4) -- (9);
\draw[thick] (4) -- (10);
\draw[thick] (5) -- (11);
\draw[thick] (5) -- (12);
\draw[thick] (7) -- (13);
\draw[thick] (7) -- (14);
\draw[thick] (9) -- (15);
\draw[thick] (10) -- (16);
\draw[thick] (10) -- (17);
\draw[thick] (12) -- (18);
\draw[thick] (13) -- (19);
\draw[thick] (13) -- (20);
\draw[thick] (14) -- (21);
\draw[thick] (15) -- (22);
\draw[thick] (16) -- (23);
\end{tikzpicture}
\caption{If the random walk {$X$} at an even time is in the squared vertex, whose level is $(4)$, then the walker {$X$} is even and new leaf can only be added to vertices with even level, unless the walker uses the self-loop at the root. }
\label{fig:walker-parity}
\end{figure}
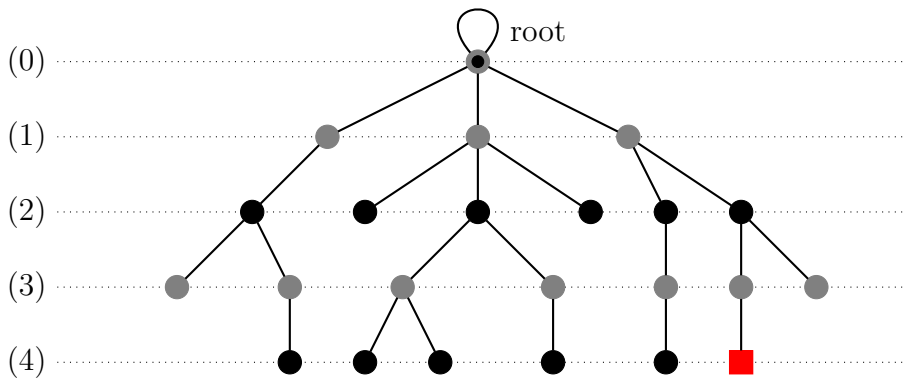
The change of parity of the walker imposes  crucial constraints on the growth of the tree when $s$ is even. 
\begin{itemize}
\item  The tree can grow to deeper levels only if the walker {$X$} changes its parity. Specifically,   if we consider a leaf $i$ added at time $t=ms$, subsequent leaves can be added to $i$ only if the walker changes its parity after time $t$.
\item If the walker {$X$} does not change its parity, new leaves can only be added to a finite set of vertices (those whose levels have  the same parity of the walker).
\end{itemize}
In most of the results presented in this chapter, we do not need the hypothesis of independence on the environment, and thus the Markov property. The reader should keep in mind that although the \Name{} is not necessarily markovian, if we only observe the evolution of the walker {$X$} on vertices with opposite parity as that of itself between consecutive uses of the self-loop, then this evolution is markovian. 

Following \cite{FIN}, a few questions naturally arise:  will the random walk change its parity an infinite number of times with probability one? What is the impact of  the environment~$\{\xi_{j}\}_{j \in \N}$ on the behavior of \Name{}? 
Note that, if the random walk changes its parity a finite number of time with positive probability then with positive probability the tree will have a finite depth.
A necessary condition to change parity an infinite number of times almost surely is that the walker visits the root an infinite number of times with probability one, i.e., that the root is recurrent. In this section we  show that:

\begin{itemize}
\item If  the  environment $\{\xi_{j}\}_{j \in \N}$ satisfies assumption \eqref{cond:S} (see, Section~\ref{sec:environment}) then:
\begin{enumerate}[i)]
\item the recurrence of the root is also a sufficient condition to assure the walker changes its parity infinitely often almost surely (Corollary~\ref{cor:recur}).
\item every vertex of the tree (also the root)  is recurrent (Theorem~\ref{rec-even}).
\end{enumerate}
\item If the environment satisfies condition \eqref{cond:I} (see, Section~\ref{sec:environment}) then the walker gets trapped almost surely, i.e., will keep on bouncing from one (random) vertex to its neighbors and back forever (Theorem~\ref{rec-even}). 
\end{itemize} 
Let us mention that,  for the specific case $(2k,1)$-\Name{} the recurrence is proved in \cite{FIN} and that  $(2k,1)$-\Name{} trivially satisfies condition~\eqref{cond:S}.

\medskip 
Before proving the main theorem of this section  we introduce some auxiliary results. The first one uses the fact that the random walk in \Name{} is symmetric to conclude that if the  walker visits a vertex $x$ an infinite number of times and traverse a specific edge incident to this vertex an infinite number of times, then it must traverse every edge incident to $x$ an infinite number of times. 
\begin{lemma}\label{lem:neighbors-recurrence}
	 Consider the {\Name{}} process $\{(T_n,X_n)\}_{n\in \N}$ with initial condition $(T_0,x_0)$. If there exist $x, y \in V(T_0)$ such that
	\begin{align*}
	\sum_{n=1}^\infty \mathbb{1}\{X_n=x\}\mathbb{1}\{X_{n+1}=y\}=+\infty\;, \quad \Pd_{T_0,x_0, s, \bxi}-a.s. \;, 
	\end{align*}
	then,  for any $z$ such that $\{x,z\} \in E(T_0)$ ($z$ neighbor of $x$)  
		\begin{align*}
	\sum_{n=1}^\infty \mathbb{1}\{X_n=x\}\mathbb{1}\{X_{n+1}=z\}=+\infty\;, \quad \Pd_{T_0,x_0, s, \bxi}-a.s.\;. 
	\end{align*}
\end{lemma}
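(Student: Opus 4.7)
My plan is to apply L\'evy's conditional extension of the second Borel--Cantelli lemma, exploiting the symmetric (uniform) selection of the next edge in update rule~(2). Let $\mathcal{F}_n := \sigma\bigl((T_k,X_k)_{k\le n},(\xi_k)_{k<n}\bigr)$ be the natural filtration of the process (no independence of $\bxi$ is required), and set
$$
A_n := \mathbb{1}\{X_n = x,\, X_{n+1} = y\}, \qquad B_n := \mathbb{1}\{X_n = x,\, X_{n+1} = z\},
$$
so that both sequences are adapted to $(\mathcal{F}_{n+1})_{n\ge 0}$.

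The key step is the almost-sure identity
$$
\Pd_{T_0,x_0,s,\bxi}\bigl(A_n = 1 \mid \mathcal{F}_n\bigr) \;=\; \Pd_{T_0,x_0,s,\bxi}\bigl(B_n = 1 \mid \mathcal{F}_n\bigr).
$$
Since $y,z\in V(T_0)$, both are permanent neighbors of $x$ in every $T_m$, $m\ge 0$. On $\{X_n = x\}$, conditionally on $\mathcal{F}_n$ and on $T_{n+1}$, rule~(2) assigns probability $1/\deg_{T_{n+1}}(x)$ to each incident edge, so $\{x,y\}$ and $\{x,z\}$ receive equal weight. Integrating out the only extra randomness in $T_{n+1}$ given $\mathcal{F}_n$ --- namely $\xi_n$ when $n \equiv 0 \pmod s$ --- preserves this symmetry, because the newly added leaves are distinct from $y$ and $z$. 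On $\{X_n \ne x\}$ both conditional probabilities vanish trivially.

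To conclude, I invoke L\'evy's generalized Borel--Cantelli lemma: for any $\{0,1\}$-valued sequence $(C_n)$ adapted to $(\mathcal{F}_{n+1})$ with $p_n := \Pd(C_n = 1 \mid \mathcal{F}_n)$, one has $\{\sum_n C_n = +\infty\} = \{\sum_n p_n = +\infty\}$ almost surely. Applied to $(A_n)$, the hypothesis $\sum_n A_n = +\infty$ a.s.\ implies $\sum_n \Pd(A_n=1 \mid \mathcal{F}_n) = +\infty$ a.s.; by the identity above the same holds for $\sum_n \Pd(B_n=1 \mid \mathcal{F}_n)$, and a second application of L\'evy's lemma in the reverse direction delivers $\sum_n B_n = +\infty$ almost surely, as required. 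The only mildly delicate point is preserving the $y/z$-symmetry across the leaf-addition step, which is immediate once the filtration is set up as above; no ellipticity or moment assumption on $\bxi$ is needed.
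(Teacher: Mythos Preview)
Your proof is correct. Both arguments hinge on the same symmetry observation---that, conditionally on the past at a visit to $x$, the uniform step rule assigns equal probability to the edges $\{x,y\}$ and $\{x,z\}$---but you package it differently from the paper. The paper first passes to the subsequence $(\tau_k)_k$ of visit times to $x$, rewrites each sum as $\sum_k \mathbb{1}\{X_{\tau_k+1}=\cdot\}$, and then appeals to a coupling argument (swapping the roles of $y$ and $z$) to transfer the almost-sure divergence from one neighbor to another. You instead stay on the original time scale and invoke L\'evy's conditional Borel--Cantelli lemma twice, converting the hypothesis into divergence of the common compensator $\sum_n \mathbb{1}\{X_n=x\}/\deg_{T_{n+1}}(x)$ and back. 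Your route is arguably cleaner: it names a standard result, avoids the subsequence detour, and makes the role of the filtration (in particular the integration over $\xi_n$) explicit, whereas the paper's coupling is left somewhat informal. The paper's approach, on the other hand, is closer in spirit to the Doob decomposition used later in the proof of Theorem~\ref{rec-even}.
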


\begin{proof}
If $\sum_{n=1}^\infty \mathbb{1}\{X_n=x\}\mathbb{1}\{X_{n+1}=y\}=+\infty$  $\Pd_{T_0,x_0, s, \bxi}$-a.s. then clearly the vertex $x$ is recurrent. This in particular, implies that the time of the $k$-th visit to $x$, i.e.,  $$\tau_k:=\inf\{n\geq \tau_{k-1}: X_n=x\}\;,$$ is finite almost surely. Thus,
\[
\sum_{n=1}^\infty \mathbb{1}\{X_n=x\}\mathbb{1}\{X_{n+1}=y\}=\sum_{k=1}^\infty \mathbb{1}\{X_{\tau_k}=x\}\mathbb{1}\{X_{\tau_k+1}=y\}
=\sum_{k=1}^\infty \mathbb{1}\{X_{\tau_k+1}=y\}=+\infty\;.
\]
Using a coupling argument, it can be shown that the distribution of the random variable~$\mathbb{1}\{X_{\tau_k+1}=y\}$ only depends on the degree of vertex $x$ at time $\tau_k+1$, and does not depend on the specific neighbors. Therefore, we conclude that 
\[
\sum_{k=1}^\infty \mathbb{1}\{X_{\tau_k+1}=y\}=\infty \quad \Pd_{T_0,x_0, s, \bxi}-a.s. \implies \sum_{k=1}^\infty \mathbb{1}\{X_{\tau_k+1}=z\}=+\infty\quad \Pd_{T_0,x_0, s, \bxi}-a.s.\;, 
\] 
for all $z$ neighbors of $x$.
\end{proof}
The second auxiliary result is fundamental to prove the main theorem of this section. It states that, under condition \eqref{cond:S} on the environment, the random walk does not get stuck  bouncing  from one vertex to its neighbors and back forever, whereas under condition \eqref{cond:I}, the walker has a positive probability to keep on bouncing back forever. Before stating the lemma, let us define 
\begin{equation*}
\tau_{\rm exit} := \inf \{ 2n \in \N \; : \; X_{2n} \neq  X_0\}\;,
\end{equation*}
 the first time the walker does not come back to the initial node after two steps.
\begin{lemma}\label{lem:time-exits}
	Consider an even {$(s,\boldsymbol{\xi})$-\Name{}} process. Then, for every initial state $(T_0,x_0)$ with $T_0$ finite:
	\begin{enumerate}[i)]
		\item if $\boldsymbol{\xi}$ satisfies condition \eqref{cond:S},  it holds that
		\[
		\Pd_{T_0,x_0, s, \bxi}\left(\tau_{\rm exit}(x_0) <\infty\right)=1\;,
		\]
		\item if $\boldsymbol{\xi}$ satisfies condition \eqref{cond:I}  and $x_0$ has at least a neighboring leaf in $T_0$ there exits a positive constant {$C$, depending only on $s$ and on the distribution of $\bxi$,} such that 
		\[
		\Pd_{T_0,x_0, s, \bxi}\left(\tau_{\rm exit}(x_0) =\infty\right)> e^{- C \,(\deg_{T_0}(x_0)-{\rm leaf}_{T_0}(x_0)) }>0\;,
		\]
		where $\deg_{T_0}(x_0)$ denotes the degree of $x_0$ in $T_0$ and ${\rm leaf}_{T_0}(x_0)$ the number of neighboring leaves. 
	\end{enumerate}
\end{lemma}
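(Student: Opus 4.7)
The plan is to work on the event $\{\tau_{\rm exit}>2n\}$ where the walker bounces back to $x_0$ at every even time. Since $s$ is even, every leaf-adding time $ks$ is even and finds the walker at $x_0$; hence every addition accrues to $x_0$, whereas each non-leaf neighbour $y$ of $x_0$ keeps its initial degree (the walker never reaches distance $\geq 2$). Setting
\[
d_n := \deg_{T_0}(x_0) + \sum_{k:\,ks\le 2n}\xi_{ks},
\]
a direct computation from the uniform-edge rule gives
\[
\Pd_{T_0,x_0,s,\bxi}\!\left(X_{2n+2}\neq x_0 \,\Bigm|\, \mathcal{F}_{2n},\,\tau_{\rm exit}>2n\right)=\frac{A}{d_n}, \quad A:=\sum_{y\text{ non-leaf nbr of }x_0}\!\left(1-\tfrac{1}{\deg_{T_0}(y)}\right),
\]
with $A\in[M/2,M]$ and $M:=\deg_{T_0}(x_0)-{\rm leaf}_{T_0}(x_0)$. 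Since on the bouncing event $d_n$ is a deterministic function of $\bxi$, iterating conditional probabilities gives
\[
\Pd_{T_0,x_0,s,\bxi}(\tau_{\rm exit}=\infty \mid \bxi)=\prod_{n\ge 0}\!\left(1-\tfrac{A}{d_n}\right).
\]
The whole question thus reduces to whether $\sum_n 1/d_n$ diverges or converges.

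For (i) I would show $\sum_n 1/d_n=\infty$ a.s., which kills the product and, after taking expectation over $\bxi$, yields $\Pd(\tau_{\rm exit}=\infty)=0$. Since $d_n\le \deg_{T_0}(x_0)+S_{2n}$, it suffices to prove $\sum_n 1/S_{2n}=\infty$ a.s. Condition~\eqref{cond:S} gives $S_n\le c\,g(n)$ eventually, so $\sum_n 1/S_n\ge \sum_n 1/(cg(n))=\infty$ a.s. Monotonicity of $(S_n)$ transfers this to the even subsequence:
\[
\sum_n \tfrac{1}{S_n}= \sum_n \tfrac{1}{S_{2n}}+\sum_n \tfrac{1}{S_{2n+1}}\le 2\sum_n \tfrac{1}{S_{2n}},
\]
giving $\sum 1/S_{2n}=\infty$ a.s. and proving (i). (The degenerate sub-case $A=0$, where $x_0$ has only leaf neighbours and no self-loop, yields $\tau_{\rm exit}=\infty$ trivially and must be implicitly excluded.)

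For (ii) I reverse the direction. In the independent setting the sub-sequence $\{\xi_{ks}\}_k$ is itself independent and, by the same heavy-tail/Borel--Cantelli argument used in the motivating example for \eqref{cond:I}, satisfies an analogous lower bound $\sum_{k:ks\le 2n}\xi_{ks}\ge c'\tilde f(n)$ eventually with $\sum 1/\tilde f<\infty$. Hence $K(\bxi):=\sum_n 1/d_n$ is a.s.\ finite. Using $\log(1-x)\ge -2x$ for $x\le 1/2$ and discarding the finitely many initial indices where $A/d_n\ge 1/2$,
\[
\Pd_{T_0,x_0,s,\bxi}(\tau_{\rm exit}=\infty \mid \bxi)\ge c_0\,\exp\!\left(-2A\,K(\bxi)\right),
\]
where $c_0$ is a deterministic lower bound on the probability of surviving those initial indices — and this is precisely where the hypothesis ``$x_0$ has a leaf neighbour in $T_0$'' enters: selecting that leaf at each initial step gives a guaranteed return. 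Picking $K_0$ with $\Pd(K(\bxi)\le K_0)\ge 1/2$ and integrating over $\{K\le K_0\}$ yields
\[
\Pd_{T_0,x_0,s,\bxi}(\tau_{\rm exit}=\infty)\ge \tfrac{c_0}{2}\,e^{-2AK_0}\ge e^{-CM},
\]
for some $C$ depending only on $s$ and the law of $\bxi$, since $A\le M$.

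The main obstacle is producing the \emph{deterministic} constant $C$ in (ii): \eqref{cond:I} is only an almost-sure asymptotic statement, so $K(\bxi)$ is random with no uniform ceiling. The truncation at $K_0$ is what converts an a.s.\ finite bound into a constant, and the ``neighbouring leaf'' hypothesis is what absorbs the uncontrolled transient before the asymptotic growth of $d_n$ kicks in. A subsidiary technical point, clean under independence but more delicate otherwise, is transferring \eqref{cond:S} and \eqref{cond:I} from the full sum $S_n$ to the relevant subsum over multiples of $s$.
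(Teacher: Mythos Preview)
Your approach coincides with the paper's: both reduce the question to whether the product $\prod_n(1-c_n/d_n)$ vanishes, where $d_n$ is the running degree of $x_0$, and then invoke \eqref{cond:S} (resp.\ \eqref{cond:I}) to force $\sum 1/d_n$ to diverge (resp.\ converge). The paper works in blocks of $s$ steps and, for (i), first reduces to the worst case of a star with a single non-leaf neighbour; your direct two-step computation with $A=\sum_y(1-1/\deg_{T_0}(y))$ is a clean variant and your proof of (i) is correct.

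For (ii) there is a gap. The number of ``initial indices'' with $A/d_n>\tfrac12$ depends on how quickly the particular realisation of $\bxi$ feeds leaves into $x_0$; it is random, so your $c_0$ is not deterministic as claimed, and ``selecting the original leaf each time'' does not furnish a uniform bound since the number of such selections is itself uncontrolled. The paper circumvents this in two moves. First it uses $\log(1-x)\ge -x-\tfrac{x^2}{1-x}=-\tfrac{x}{1-x}$, valid on all of $[0,1)$, which removes the need to split off an ``$x\le\tfrac12$'' regime. Second, from \eqref{cond:I} it extracts a \emph{deterministic} $n_0$ and a set $\Omega_\varepsilon^c$ of probability $>\varepsilon$ on which $S_i\ge cf(i)$ for all $i\ge n_0$; the first $n_0$ factors are then bounded by $(\ell/d)^{sn_0/2}\ge e^{-(s/2)n_0 M}$ (using $\log(\ell/d)\ge -(d-\ell)=-M$, which is where $\ell\ge 1$ enters), and the tail by $e^{-(s/2)M\widehat C}$ with $\widehat C=\sum_i 1/(cf(i))$. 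Alternatively, within your framework, the sharper inequality gives $\prod_n(1-A/d_n)\ge\exp\!\big(-M\sum_n 1/(d_n-M)\big)$ directly, and since $d_n-M\ge 1$ the sum can be bounded by a functional of $\bxi$ alone, after which your $K_0$-truncation becomes legitimate without any separate $c_0$. Finally, note that the case $x_0=\Root$ needs separate treatment, as the self-loop alters the two-step transition formula; the paper handles it by an analogous argument.
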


\begin{proof} 
\underline{Proof of item \textit{(i)}.} Let us first assume that $x_0$ is different from the root.  We shall prove the Lemma considering the ``worst'' possible scenario, i.e., the case in which $T_0$ is a star centered at $x_0$, whose degree is $d$ and  $d-1$ neighbors of $x_0$ are leaves and one neighbor is the root with a self-loop. We shall assume that $d\geq 2$, i.e., $x_0$ has at least a neighboring leaf; the case where $d=1$ is similar and easier. As it turns out, we shall prove that $\tau_{\rm exit}(x_0) <\infty$ almost surely, regardless of the value of $d$, which ``justify'' why this choice of $T_0$ is the worst possible scenario. Note that for this choice of $(T_0, x_0)$ the time $\tau_{\rm exit}(x_0)$ corresponds to the first time the walker traverses the self-loop. 
	To show that $\tau_{\rm exit}(x_0) <\infty$ almost surely, it is enough to show that $\widetilde{\tau}_{\rm exit}(x_0) <\infty$ a.s., where $\widetilde{\tau}_{\rm exit}$ denotes the first time the walker visits the root. This is because every time the walker visits the root, it has a constant probability (equal to 1/2) to traverse the self-loop. 

As long as $ \widetilde{\tau}_{\rm exit} > sn$, we have that  $S_{n}:= \sum_{j=0}^n \xi_j$ denotes the number of new leaves attached to $x_0$  up to time $sn$. Note that, if the random walk steps towards a leaf (not the root), it will necessarily be at $x_0$ in the next step.  The probability of choosing a leaf at time~$sn$ is $1-\frac{1}{d+S_n}$. Therefore, 
	\begin{eqnarray}\label{tailtau}
	\Pd_{T_0,x_0, s, \bxi}\left(\widetilde{\tau}_{\rm exit}> sn\right)
	& = &  E \big( \Pd_{T_0,x_0, s, \bxi}\left(\widetilde{\tau}_{\rm exit}> sn \big| \bxi \right) \big)
	\, = \, E \Big(\Pi_{i=0}^{n-1}\left(1-\frac{1}{d+S_i}\right)^{s/2}\Big) \nonumber \\
	& = & E \Big(\exp\big\{\frac{s}{2}\sum_{i=0}^{n-1}\log(1-\frac{1}{d+S_i})\big\}\Big)\;,		
	\end{eqnarray}
	where $\mathrm{E}$ denotes the expectation with respect to the environment $\boldsymbol{\xi}$.
	Since $\log(1-x)\leq -x$, for $0\le x <1$,   we obtain that  
	\begin{align*}
	\lim_{n\rightarrow\infty} \Pd_{T_0,x_0, s, \bxi}\left(\widetilde{\tau}_{\rm exit}> sn\right) & \le \lim_{n\rightarrow\infty}E \Big(\exp\big\{-\frac{s}{2}\sum_{i=0}^{n-1}\frac{1}{d+S_i}\big\}\Big)=E \Big(\exp\big\{-\frac{s}{2}\lim_{n}\sum_{i=0}^{n-1}\frac{1}{d+S_i}\big\}\Big),
	\end{align*}
	where the last inequality follows from the Dominated Convergence Theorem. Moreover, using that we are under condition \eqref{cond:S}, it follows
	\[
	\limsup_{i \rightarrow \infty}\frac{S_i}{g(i)} \le c, \; P\text{-almost surely}\;, 
	\]
	which implies that {there exists $L(\omega)$, for almost every $\omega$,} such that $S_i(\omega)< 2g(i)c$ for all $i\ge L(\omega)$. Then
	\begin{equation} \label{Sinfty}
	\sum_{i=L(\omega)}^{n-1}\frac{1}{d+S_i(\omega)}\geq \sum_{i=L(\omega)}^{n-1}\frac{1}{d+2g(i)c}\rightarrow \infty \text{ as } n\rightarrow\infty\;,
	\end{equation}
	regardless the value of $d$. Thus
	\begin{align*}
	\Pd_{T_0,x_0, s, \bxi}\left(\widetilde{\tau}_{\rm exit}=\infty\right)=\lim_{n\rightarrow\infty} \Pd_{T_0,x_0, s, \bxi}\left(\widetilde{\tau}_{\rm exit}> sn\right) \le E \Big(\exp\big\{-\frac{s}{2}\lim_{n}\sum_{i=0}^{n-1}\frac{1}{d+S_i}\big\}\Big)= 0\;.
	\end{align*}
	{Let us now consider the case $x_0={\rm root}$. We shall prove the Lemma considering the ``worst'' possible scenario, which in this case corresponds to having  $T_0$  a star
	centered at $x_0$ whose degree is $d$ (we can assume $d\geq 2$) and $d-1$ neighbors of 
	$x_0$ are leaves. Note that in this case $\tau_{\rm exit}(x_0)$ corresponds to the first time the walker traverses the self-loop (at an odd time) and in the subsequent step  visits a leaf.    
	To show that $\tau_{\rm exit}(x_0) <\infty$ almost surely, it is enough to show that $\widetilde{\tau}_{\rm exit} <\infty$ a.s., where $\widetilde{\tau}_{\rm exit}$ denotes the first time the walker uses the self-loop. This is because every time the walker traverses the self-loop at an odd time, it has a  probability bigger than or equal to $\frac{d-1}{d}$ to visit a leaf. Now the proof follows the same line of reasoning as above.}
	
	\medskip 
	
	\noindent
	\underline{Proof of item \textit{(ii)}.} Given a vertex $x_0\in V(T_0)$, we denote by $d=\deg_{T_0}(x_0)$  and 
	by~$\ell={\rm leaf}_{T_0}(x_0)$ its degree  and the number of neighboring leaves  in $T_0$, respectively. Note that, by the hypothesis we have that $\ell\geq 1$, while for the tree structure we have  $d-\ell\geq 1$. 
	Let us define $\widetilde{\tau}_{\rm exit}(x_0)$ the first time the random walk visits a non-leaf vertex neighbor of $x_0$. Then 
	\[
	\Pd_{T_0,x_0, s, \boldsymbol{\xi}} \left( \tau_{\rm{exit}}(x_0)> sn\right)\geq \Pd_{T_0,x_0, s, \boldsymbol{\xi}} \left( \widetilde{\tau}_{\mathrm{exit}}(x_0)> sn\right)\;,
	\]
	for every $s$ and $n$. Thus, to prove the claim it suffices to show that 
	$\Pd_{T_0,x_0, s, \bxi}\left(\widetilde{\tau}_{\rm exit}(x_0) =\infty\right)$ is strictly positive.

	Notice that, as long as $ \widetilde{\tau}_{\rm exit}(x_0) > sn$, the probability of choosing a leaf at time $sn$ is equal to~$1-\frac{d - \ell}{d+S_n}$. Therefore, similarly to Equation~\eqref{tailtau},  we have that 
	\begin{equation*}
	\Pd_{T_0, x_0, s, \boldsymbol{\xi}} \left(\widetilde{\tau}_{\rm exit}(x_0) > sn\right) =E\left( \exp \Big\{\frac{s}{2} \sum_{i=0}^{n-1}\log \Big(1- \frac{d-\ell}{d+S_i}
	\Big)\Big\}\right)\;,
	\end{equation*}
	where $E$ denotes the expectation with respect to the environment $\boldsymbol{\xi}$.  Recall that  environment condition \eqref{cond:I}  assures that
	\begin{equation} \label{I2c}
	{P}\left( \liminf_{n\to\infty} \frac{S_n}{f(n)}\geq 2c\right) = 1\;.
	\end{equation}
	Therefore, for every $0<\varepsilon<1$, we can find a measurable subset $\Omega_{\varepsilon}$ {and $n_{0} = n_0(\varepsilon)$} such that $P(\Omega^c_{\varepsilon})>\varepsilon$ and $S_i \geq cf(i)$ on $\Omega_{\epsilon}^{c}$, for all $i\geq n_0$. Hence for $n>n_0$ we get	
	\begin{align*}
	E\left(\exp \Big\{ \frac{s}{2} \sum_{i=0}^{n-1}\log \Big(1- \frac{d-\ell}{d+S_i}
	\Big)\Big\}\right) &\geq e^{\frac{s}{2} n_0\log\left( 1- \frac{d-\ell}{d}\right)}E\left(
	\exp\Big\{\frac{s}{2}\sum_{i=n_0}^{n-1}\log\Big(1-\frac{d-\ell}{d +cf(i)}\Big)\Big\}\mathbbm{1}_{\Omega_{\varepsilon}^c} \right)
	%
	\\
	&\ge \varepsilon e^{\frac{s}{2} n_0\log\left( 1- \frac{d-\ell}{d}\right)}\exp\left\{ \frac{s}{2}\sum_{i=n_0}^{n-1}\log\Big(1-\frac{d-\ell}{d+cf(i)}\Big)\right\}
	\;.
	\end{align*}
	Since $\log(1-x)\geq -x -\frac{x^2}{1-x}$,  for $0\leq x<1$, we have that  
	\begin{align*}
	\sum_{i=n_0}^{n-1}&\log\left(1-\frac{d-\ell}{d+cf(i)}\right)\geq -\sum_{i=n_0}^{n-1}\frac{d-\ell}{d+cf(i)} - \sum_{i=n_0}^{n-1}\frac{(d-\ell)^2}{\left(d+cf(i)\right)\left(\ell+cf(i)\right)}
	\\&
	=- \sum_{i=n_0}^{n-1} \frac{d-\ell}{\ell+cf(i)}\ge - \sum_{i=n_0}^{n-1} \frac{d-\ell}{cf(i)} \;.
	\end{align*}
	{Moreover $\log\left( 1- \frac{d-\ell}{d}\right) = \log(\ell) - \log(d) \ge - (d-\ell)$. }
	
	Using the hypothesis that $\sum_{i=1}^\infty \frac{1}{f(i)}<\infty$, we obtain that there exists a positive constant~$\widehat{C}$ such that 
	\begin{align*}
	\Pd_{T_0, x_0, s, \boldsymbol{\xi}} \left(\widetilde{\tau}_{\rm exit}(x_0) =\infty \right)=\lim_{n\to \infty}\Pd_{T_0, x_0, s, \boldsymbol{\xi}} \left(\widetilde{\tau}_{\rm exit}(x_0) > sn\right)\ge \varepsilon e^{-\frac{s}{2}n_0 (d-\ell) } e^{ -\frac{s}{2} (d-\ell)\, \widehat{C} }> 0\;.
	\end{align*} 
	{To finish the proof of the claim choose $C= - \log \epsilon + \frac{s}{2} (n_0 + \widehat{C})$ and note that $\varepsilon$ is fixed, $n_0$ depends on $\varepsilon$ and on the distribution of $\bxi$, and $\widehat{C}$ depends only on the distribution of $\bxi$. We point out that $n_0$ should increase with $\varepsilon$ and we have not considered the optional choice for $\varepsilon$ here.}
\end{proof}

\begin{remark}
	Note that in the above Lemma (part $ii)$), we implicitly use the fact that, at time $0$ the walker is in  $x_0$, and  new leaf can be added  to $x_0$ without the need to change parity. In particular, if the walker  steps on a vertex with the ``wrong'' parity,  it will not be able to add leaves to the vertex without changing its parity  first, and in particular not before the corresponding $\tau_{\rm exit}$, thus the Lemma will not be true in this case! 
	In the sequel we will need to use this Lemma together with the strong Markov property and therefore we must be sure the walker  steps on a vertex with the right parity.    
\end{remark}

\begin{remark} \label{geralSI}
Lemma~\ref{lem:time-exits} also holds if the environment $\bxi$ is shifted by an almost surely finite random time. First consider (i). If $\bxi$ satisfies condition \eqref{cond:S}  then the environment shifted by an almost surely finite random time also satisfies condition \eqref{cond:S}. To see that, it is enough to show that condition \eqref{cond:S}  is preserved by any finite shift of the environment. For $k\ge 0$ let $\theta_k$ be the forward time shift such that~$\theta_k(\boldsymbol{\xi}) = \{\xi_{j+k}\}_{j \in \N}$ and $S_n \comp \theta_k = \sum_{j=k+1}^{n+k} \xi_j = S_{n+k} - S_k$. If $\bxi$ satisfies condition \eqref{cond:S} with function $g$ and constant $c$, then for any $k\geq 0$, the environment $\theta_k(\boldsymbol{\xi})$ satisfies condition \eqref{cond:S} with function $\tilde{g}(n)=g(n+k)$ and constant $c$. Although $k$ might be random, we still have an almost surely divergence in \eqref{Sinfty} and (i) holds. Now we consider (ii) which is even simpler. Suppose that $\bxi$ satisfies \eqref{I2c}. Observe that $f(n) \rightarrow \infty$. Then
$$
\frac{S_n \comp \theta_k}{f(n)} = \frac{f(k+n)}{f(n)} \frac{S_{n+k}}{f(k+n)} - \frac{S_k}{f(n)} \ge c\;,
$$
for all sufficiently large $n$ depending on the possibly random $k$. This implies that the proof of (ii) is the same.
\end{remark}

For the subsequent results we will need to define a  random walk over an auxiliary graph that will play a crucial role in the proofs we are going to provide. 

Let us denote by $Y_n=X_{2n}$ the position of the walker after two steps, and define the sequence of stopping times: $\phi_0\equiv 0$ and for $k\geq 1$
\[
\phi_k:=\inf\{n>\phi_{k-1}: Y_n\neq Y_{\phi_{k-1}}\}\;.
\]
Note that, under condition \eqref{cond:S}, Lemma~\ref{lem:time-exits} guarantees  that $\phi_k$ is almost surely finite, for every $k$. Thus the process $\{Z_k\}_{k \in \N}$ defined as
\begin{equation}\label{def:Z}
Z_k := Y_{\phi_k}\;,
\end{equation}
is well defined.

Interestingly, as long as the walker { $X$}  does not traverse the self-loop, the process $Z$ is homogeneous. More specifically, if $t_m$ denotes the $m$-th time the walker { $X$} crossed the self-loop, 
then $Z_k$ for $t_m \leq k \leq t_{m+1}$ is a symmetric  random walk on a graph, whose structure only depends on the 
\Name{}  process up to time $t_m$, and remains fixed  during time $t_m$ and~$t_{m+1}$ (see, Figure~\ref{fig:Z-process}).

\begin{figure}[h]
\begin{tikzpicture}[scale=0.55]

\tikzset{every loop/.style={min distance=10mm,looseness=10}}
\tikzstyle{every state}=[scale=0.2,draw, fill]

\node[state,gray] at (0, 0)   (1) {};
\node[scale=0.4,draw, fill, black,circle] at (0, 0)  {};
\node at (1.3, 0.4)   () {root};
\node[state, gray] at (-2, -1)   (2) {};
\node[state, gray] at (0, -1)   (3) {};
\node[state, gray] at (2, -1)   (4) {};
\node[state] at (-3, -2)   (5) {};
\node[state] at (-1.5, -2)   (6) {};
\node[state] at (0, -2)   (7) {};
\node[state] at (1.5, -2)   (8) {};
\node[state] at (2.5, -2)   (9) {};
\node[state] at (3.5, -2)   (10) {};
\node[state, gray] at (-4, -3)   (11) {};
\node[state, gray] at (-2.5, -3)   (12) {};
\node[state, gray] at (-1, -3)   (13) {};
\node[state, gray] at (1, -3)   (14) {};
\node[state, gray] at (2.5, -3)   (15) {};
\node[state, gray] at (3.5, -3)   (16) {};
\node[state, gray] at (4.5, -3)   (17) {};
\node[state] at (-2.5, -4)   (18) {};
\node[state] at (-1.5, -4)   (19) {};
\node[state] at (-0.5, -4)   (20) {};
\node[state] at (1, -4)   (21) {};
\node[state] at (2.5, -4)   (22) {};
\node[state, red, rectangle] at (3.5, -4)   (23) {};


\draw[thick]   (1) edge [in=50,out=130,loop]  (1);

\draw[thick] (1) -- (2);
\draw[thick] (1) -- (3);
\draw[thick] (1) -- (4);
\draw[thick] (2) -- (5);
\draw[thick] (3) -- (6);
\draw[thick] (3) -- (7);
\draw[thick] (3) -- (8);
\draw[thick] (4) -- (9);
\draw[thick] (4) -- (10);
\draw[thick] (5) -- (11);
\draw[thick] (5) -- (12);
\draw[thick] (7) -- (13);
\draw[thick] (7) -- (14);
\draw[thick] (9) -- (15);
\draw[thick] (10) -- (16);
\draw[thick] (10) -- (17);
\draw[thick] (12) -- (18);
\draw[thick] (13) -- (19);
\draw[thick] (13) -- (20);
\draw[thick] (14) -- (21);
\draw[thick] (15) -- (22);
\draw[thick] (16) -- (23);
\end{tikzpicture}

\begin{minipage}{0.75\textwidth}
\begin{tikzpicture}[scale=0.7]

\tikzset{every loop/.style={min distance=10mm,looseness=10}}
\tikzstyle{every state}=[scale=0.27,draw, fill]

\node[state] at (0, 0)   (1) {};
\node at (0.8, 0.4)   () {root};
\node[state,gray] at (10, 0)   (1a) {};
\node at (10.8, 0.4)   () {root};

\node[state, gray] at (8, -1)   (2a) {};
\node[state, gray] at (10, -1)   (3a) {};
\node[state, gray] at (12, -1)   (4a) {};

\node[state] at (-3, -2)   (5) {};
\node[state] at (-1.5, -2)   (6) {};
\node[state] at (0, -2)   (7) {};
\node[state] at (1.5, -2)   (8) {};
\node[state] at (2.5, -2)   (9) {};
\node[state] at (3.5, -2)   (10) {};

\node[state, gray] at (6, -3)   (11a) {};
\node[state, gray] at (8.5, -3)   (12a) {};
\node[state, gray] at (9, -3)   (13a) {};
\node[state, gray] at (11, -3)   (14a) {};
\node[state, gray] at (12.5, -3)   (15a) {};
\node[state, gray] at (13.5, -3)   (16a) {};
\node[state, gray] at (14.5, -3)   (17a) {};

\node[state] at (-2.5, -4)   (18) {};
\node[state] at (-1.5, -4)   (19) {};
\node[state] at (-0.5, -4)   (20) {};
\node[state] at (1, -4)   (21) {};
\node[state] at (2.5, -4)   (22) {};
\node[state, red, rectangle] at (3.5, -4)   (23) {};


\draw[thick]   (1) edge [thick,dotted]  (1a);

\draw[thick] (1a) -- (2a);
\draw[thick] (1a) -- (3a);
\draw[thick] (1a) -- (4a);

\draw[thick] (2a) -- (3a);
\draw[thick] (3a) -- (4a);

\draw[thick] (2a) -- (11a);
\draw[thick] (2a) -- (12a);
\draw[thick] (11a) -- (12a);

\draw[thick] (3a) -- (13a);
\draw[thick] (3a) -- (14a);
\draw[thick] (13a) -- (14a);

\draw[thick] (4a) -- (15a);
\draw[thick] (4a) -- (16a);
\draw[thick] (4a) -- (17a);
\draw[thick] (15a) -- (16a);
\draw[thick] (16a) -- (17a);


\draw[thick] (1) -- (5);
\draw[thick] (1) -- (6);
\draw[thick] (1) -- (7);
\draw[thick] (1) -- (8);
\draw[thick] (1) -- (9);
\draw[thick] (1) -- (10);
\draw[thick] (6) -- (7);
\draw[thick] (7) -- (8);
\draw[thick] (9) -- (10);
\draw[thick] (5) -- (18);
\draw[thick] (7) -- (19);
\draw[thick] (7) -- (20);
\draw[thick] (7) -- (21);
\draw[thick] (9) -- (22);
\draw[thick] (10) -- (23);
\draw[thick] (19) -- (20);
\end{tikzpicture}
\end{minipage}
\caption{The process $Z$ and its graph corresponding to the situation depicted in Figure~\ref{fig:walker-parity}. As long as $Z$ (squared vertex) moves on the graph with black vertices, it will alter the gray graph structure. However, until {the process $X$}  crosses the self-loop (dotted line) {$Z$} will not see the gray structure. As soon as {$X$} crosses the self-loop the situation is interchanged.}
\label{fig:Z-process}
\end{figure}
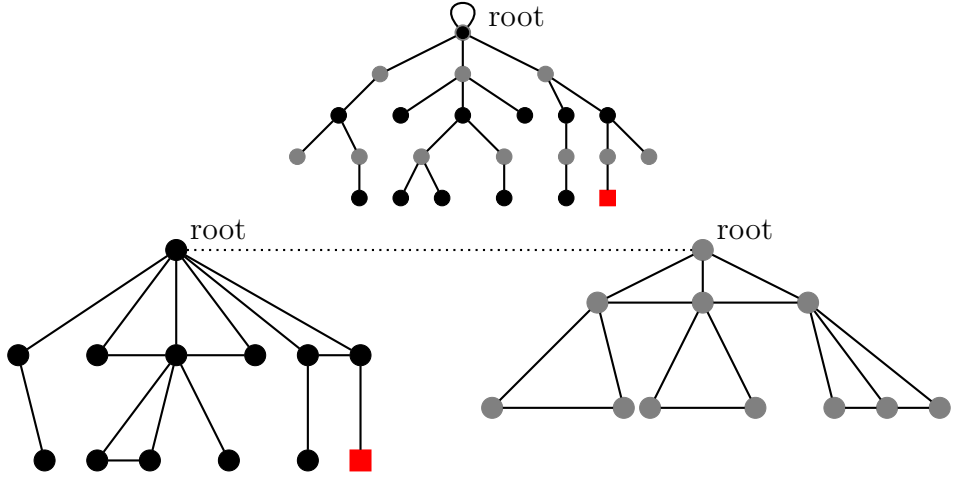
We say that the walker gets {\em trapped} at time $n$ if $\tau_{\rm exit}(X_n)=\infty$.
Note that, by the definition of $\tau_{\rm exit}$, if the walker gets trapped at time $n$ then it will also get trapped at time $n+2k$ for every $k$. 

\begin{remark}
If the walker {$X$} gets trapped at time $n$ with  probability one, then  all vertices at distance at least $2$ from $X_n$ will be clearly transient, whereas the vertex $X_n$ will be recurrent. As a matter of fact, also all the vertices at distance equal to $1$ from $X_n$ will be transient. Observe that if the walker gets trapped at time $n$ then it necessarily traverses  a finite number of times the edge connecting $X_n$ to its parent in the tree; the symmetry of the random walk assures that the same must hold for every edge incident to $X_n$. 
\end{remark}

We can now prove Theorem \ref{rec-even} the main result of this section. 

\medskip

\begin{proof}[Proof of Theorem~\ref{rec-even}]
\noindent \underline{Proof of item \textit{(i)}.} 
To prove the first part, it is enough to show that the walker traverses the self-loop an infinite number of times almost surely.  Indeed, whenever the latter happens, we know the root will be recurrent and,   using 
Lemma~\ref{lem:neighbors-recurrence}, we can conclude that the walker traverses an infinite number of times
any edge incident to the root. This will assure that the neighbors of the root will be recurrent. Knowing that these neighbors are recurrent and that the edges connecting them to the root are crossed an infinite number of times,  {  by Lemma~\ref{lem:neighbors-recurrence} we may conclude that all vertices at distance $2$ from the root are also recurrent. This  
 argument can then be iterated along all vertices of the tree.} 

\medskip 

Given $x, y$ vertices of the tree,  let us define 
$
J_n(x,y):=\sum_{k=1}^{n-1} \mathbb{1}\{X_k=x\}\mathbb{1}\{X_{k+1}=y\}
$. What we are after is to show that $\lim_n J_n(\Root, \Root)=\infty$ a.s..
\begin{claim}
$\lim_n J_n(\Root, \Root)=+\infty$, $\Pd_{T_0,x_0, s, \bxi}$-a.s. .
\end{claim}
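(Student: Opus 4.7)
The plan is proof by contradiction. Let $\mathcal{A}$ be the event that the self-loop is crossed only finitely many times, and suppose $\Pd_{T_0,x_0,s,\bxi}(\mathcal{A}) > 0$. On $\mathcal{A}$ there is a last self-loop crossing time $M$, after which the walker has constant parity; without loss of generality assume this parity is even, the odd case being analogous by interchanging the roles of even- and odd-level vertices.

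The first key step is a structural observation. On $\mathcal{A}$, new leaves after $M$ are attached only to even-level vertices, since $s$ is even and growth occurs at even $X$-times when an even-parity walker stands at an even-level vertex; the new leaves themselves sit at odd level. Such new odd-level leaves can never acquire offspring, because that would require the walker to visit them at a growth time, whereas an even-parity walker visits odd-level vertices only at odd times. Hence the set $V^*$ of even-level vertices in the tree stabilizes after $M$ to a finite set, and for the same reason the degrees $d_u$ of all odd-level non-leaf vertices are also frozen. By Lemma~\ref{lem:time-exits}(i), together with Remark~\ref{geralSI} and the strong Markov property applied at each vertex successively visited by the walker, $\tau_{\rm exit}$ is almost surely finite at every such vertex, so the process $Z$ of \eqref{def:Z} is well-defined and makes infinitely many jumps.

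Next, I would analyse the jump chain of $Z$. From $v \in V^*$, a genuine $Y$-jump visits $w \in V^*$ through the unique odd-level non-leaf $u$ on the path $v \to u \to w$, so the conditional transition probability given that a jump occurs is
\[
q(v,w) = \frac{1/d_u}{\sum_{u'} 1/d_{u'}},
\]
where the sum runs over odd-level non-leaf neighbors $u'$ of $v$. Because the $d_{u'}$'s are frozen on $\mathcal{A}$, this is a time-homogeneous, irreducible Markov chain on the finite connected graph $V^*$, reversible with $\pi(v) \propto \sum_{u'} 1/d_{u'}$, hence positive recurrent. Therefore on $\mathcal{A}$ the $Y$ process makes infinitely many sojourns at the root.

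The final step is a conditional Borel--Cantelli argument. At each $Y$-step during a sojourn of $Y$ at the root, the probability of a self-loop at the first $X$-step is $1/d_{\Root}(t)$, while the probability of a $Y$-jump from the root through a non-self-loop path (root $\to u \to w$ with $u$ odd non-leaf and $w\neq \Root$) is $C_{\Root}/d_{\Root}(t)$, where $C_{\Root} = \sum_u (d_u-1)/d_u$ is a frozen constant. The common factor $1/d_{\Root}(t)$ cancels, so conditional on the sojourn ending on this particular $Y$-step, the probability that it ends via a self-loop is bounded below by a positive constant independent of the current degree. Consequently each $Z$-sojourn at the root has, conditional on the past, probability at least some $c>0$ of containing a self-loop crossing, and with infinitely many such sojourns on $\mathcal{A}$ the conditional Borel--Cantelli lemma forces infinitely many self-loop crossings after $M$, contradicting the definition of $M$; hence $\Pd(\mathcal{A})=0$. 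The main obstacle is this ratio cancellation, which must be verified carefully in light of the time-varying degree $d_{\Root}(t)$ (including a second-step correction of order $1/d_{\Root}^2$ in the sub-case of two consecutive self-loops) so that a uniform positive lower bound on the per-sojourn self-loop probability holds as long as $d_{\Root}(t)\geq 2$, which is automatic since the root has a self-loop and at least one further neighbor.
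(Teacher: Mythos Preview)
Your proof is correct and reaches the same conclusion, but via a genuinely different route. Both arguments begin identically: assume the self-loop is crossed only finitely often, observe that on this event the parity freezes so the odd-level non-leaf degrees are fixed and the $Z$-process eventually lives on a fixed finite graph, and deduce that the root is visited infinitely often. The divergence is in the final contradiction step. The paper applies Doob's decomposition $J_n(\Root,z)=M_n(\Root,z)+A_n(\Root)$ and exploits that the compensator $A_n(\Root)=\sum_k \mathbb{1}\{X_k=\Root\}/\deg_{k+1}(\Root)$ is the \emph{same} regardless of which neighbor $z$ one targets; the bounded-increment martingale dichotomy then forces $A_n(\Root)$ to be simultaneously finite (via $J_n(\Root,\Root)<\infty$) and infinite (via $J_n(\Root,y)\to\infty$). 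Your argument instead observes directly that at each $Y$-step during a $Z$-sojourn at the root, both the probability of a self-loop use and the probability of a non-self-loop $Z$-exit carry the common factor $1/d_{\Root}(t)$, so their ratio is a frozen constant $1:C_{\Root}$; hence each sojourn contains a self-loop with probability at least $1/(1+C_{\Root})$, and conditional Borel--Cantelli yields the contradiction. Your approach is more elementary (no martingale convergence/oscillation theorem needed), while the paper's is slicker once one notices the shared compensator.

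Two minor points. Your formula $q(v,w)=(1/d_u)/\sum_{u'}1/d_{u'}$ should have denominator $\sum_{u'}(d_{u'}-1)/d_{u'}$ (and correspondingly $\pi(v)\propto \sum_{u'}(d_{u'}-1)/d_{u'}$); this does not affect the logic. More importantly, the assertion that $Z$ is a time-homogeneous Markov chain on $V^*$ is not literally true, since the unconditioned $Z$-process at the root has extra transitions through the self-loop; but this is harmless here, because to propagate recurrence to the root you only need the transition law from \emph{non-root} vertices of $V^*$, and those are indeed given by the frozen $q$.
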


{We present a proof of the claim just below defining a useful martingale and we emphasize that similar martingale techniques will also be used latter.} 


\medskip 
\begin{claimproof}
%
Without loss of generality, we can assume that there exists $y\in V(T_0)$ such that $\degr{0}{y}\geq 2$ and 
$\{\Root, y\}\in E(T_0)$. 

Let us define the following sequence of stopping times: $\sigma_0\equiv 0$ and for all $m\ge 1$ we define $\sigma_m:=\inf\{n>\sigma_{m-1}: X_{n-1}=X_n \}$ if $\sigma_{m-1}<\infty$ and $\sigma_m=\infty$, otherwise.
Since we must have $X_{\sigma_m} = \Root$, to prove the claim it is enough to show that 
the event $\{\exists m : \sigma_m=\infty\}$ has probability $0$ with respect to  $\Pd_{T_0,x_0, s, \bxi}$. 
Note that, for $y\in V(T_0)$ such that $\{\Root, y\}\in E(T_0)$, using the Doob's Decomposition Theorem, we may decompose $J_n(\Root,y)$ as
\begin{equation}\label{eq:doob2}
J_n(\Root ,y):= M_n(\Root,y) + A_n(\Root)
\;,
\end{equation}
where $M_n(\Root,y)$ defined as
\[
M_n(x,y) := \sum_{k=1}^{n-1} \left( \mathbb{1}\{X_k=\Root\}\mathbb{1}\{X_{k+1}=y\} - \frac{\mathbb{1}\{X_k=\Root\}}{\degr{k+1}{\Root}}\right),
\]
is a mean zero bounded increments martingale with respect to the filtration $\{ \mathcal{F}_{k}\}_{k \in \N}$, where~$\mathcal{F}_k$ is the $\sigma$-algebra generated by the { process $\{(T_n, X_n)\}_{n\geq 0}$ up to time $k$ together with the information of $\xi_k$}. This implies that $T_{k+1}$ is measurable with respect to $\mathcal{F}_k$ and that
\[
\Pd_{T_{0},x_0,s, \boldsymbol{\xi}}\left( X_{k+1} = y,  X_k = \Root \mid \mathcal{F}_k \right) =  \frac{\mathbb{1}\{X_k=\Root\}}{\degr{k+1}{\Root}}\;.
\]
Thus, $A_n(\Root)$ is the predictable component defined as
\[
A_n(\Root) :=\sum_{k=1}^{n-1} \frac{\mathbb{1}\{X_k=\Root\}}{\degr{k+1}{\Root}}\;.
\] 
Moreover, since $M_n$ has bounded increments, we may apply Theorem~$5.3.1$ in \cite{Du}, which guarantees that if we define the following two sets 
\begin{align*}
& C_{\Root, y}=\{\lim_n M_n(\Root, y)\text{ exists and it is finite}\}\;,
\\
& 
D_{\Root, y}=\{\limsup_n M_n(\Root, y)=\infty \text{ and } \liminf_n M_n(\Root, y)=-\infty\}\;,
\end{align*}
then, it holds that $\Pd_{T_0,x_0, s, \bxi}\left( C_{\Root, y} \cup D_{\Root, y} \right)=1$. Using the latter result, we conclude that to prove  that $\Pd_{T_0,x_0, s, \bxi}\left(\{\exists m : \sigma_m=\infty\}\right)=0$ it suffices to show that the event $F$ defined as
\[
F:= \{\exists m : \sigma_m=\infty\} \cap \left(C_{\Root, \Root} \cup D_{\Root, \Root} \right) \cap  \left(C_{\Root, y} \cup D_{\Root, y} \right)\;,
\]
has probability zero. In order to do so, we first observe that for any $\omega \in \{\exists m : \sigma_m=\infty\}$ the process $Z$ { (see, Equation~\eqref{def:Z})} eventually evolves on a fixed graph, {since $X$}  will traverse the self-loop a finite number of times. Then, on the trajectory $\omega$, { either  $Z$ visits all the vertices of this fixed graph an infinite number of times (first case) or there exist vertices on this finite graph which are visited only a finite number of times (second case). Thus we may write $\{\exists m : \sigma_m=\infty\}= \{\exists m : \sigma_m=\infty\}' \cup \{\exists m : \sigma_m=\infty\}''$, where 
$\{\exists m : \sigma_m=\infty\}'$ is the set of trajectories corresponding to the first case,  and $\{\exists m : \sigma_m=\infty\}''$ to the second case.   Since on~$\{\exists m : \sigma_m=\infty\}$ the process $Z$ is a simple random walk on a finite graph and consequently recurrent,  the  probability of $\{\exists m : \sigma_m=\infty\}''$ is zero. Thus, to prove  the claim it is enough to show that 
\[
F':= \{\exists m : \sigma_m=\infty\}' \cap \left(C_{\Root, \Root} \cup D_{\Root, \Root} \right) \cap  \left(C_{\Root, y} \cup D_{\Root, y} \right) = \emptyset\;.
\]
}
{ Let us assume, towards a contradiction, there exists $\omega \in F'$. Then $\omega \in \{\exists m : \sigma_m=\infty\}'$, and  along  this sample path the process $Z$ eventually 
evolves on a fixed graph and visits all the  vertices of this fixed graph an infinite number of times.} Therefore, due to the assumption that $y$ is a child of the root and $y$ has at least one child, we know that either $Z$ visits $y$ an infinite number of times, or it visits all the  neighbors of $y$ an infinite number of times. Both cases imply $J_n(\Root,y)(\omega)\nearrow \infty$.

As far as the very same $\omega$ is concerned, we also know that  $\omega \in C_{\Root, y} \cup D_{\Root, y}$, which implies that either $M_n(\Root, y)(\omega)\to L(\omega)<\infty$ or $\liminf_n M_n(\Root, y)(\omega)=-\infty$. Knowing that $J_n(\Root,y)(\omega)\nearrow \infty$ and considering the possible cases for $M_n(\Root,y)(\omega)$ it follows that we must have that $A_n(\Root)(\omega)\nearrow \infty$. In the first case, this follows from Equation~\ref{eq:doob2},  whilst in the second it follows from the definition of $M_n(\Root, y)$. 

{ To show the contradiction we now show that if $\omega \in  \{\exists m : \sigma_m=\infty\}'$ then $A_n(\Root)(\omega)$ converges to a finite limit. Indeed, if $\omega \in  \{\exists m : \sigma_m=\infty\}'$ then by the definition of the stopping times $\sigma_m$ it follows that $J_n(\Root, \Root)(\omega)\nearrow K(\omega)<\infty$. Using Equation~\ref{eq:doob2} and  that $A_n(\Root)$ is positive, we have that $\limsup_n M_n(\Root, \Root)(\omega)\leq K(\omega)$. Given that the trajectory $\omega$ belongs to $ C_{\Root, \Root} \cup D_{\Root, \Root}$, 
it must be the case that the martingale converges to a finite limit, i.e., $M_n(\Root, \Root)(\omega)\to K'(\omega)<\infty$. This will imply that $A_n(\Root)(\omega)\nearrow K''(\omega)<\infty$, and we reach a contradiction.}  
%
%
%
%
%
%
%
\end{claimproof}

\bigskip

\noindent \underline{Proof of item \textit{(ii)}.} 
We need to show that $\Pd_{T_{0},x_0,s, \boldsymbol{\xi}} \left(\exists n\geq 0 :  \tau_{\rm exit}(X_{2n})=\infty \right)=1$. Due to the definition of $\tau_{\rm exit}(X_n)$ it is enough to show that 
\[
\sum_{n=1}^\infty \mathbb{1}\{  \tau_{\rm exit}(X_{2n})=\infty \}=+\infty\;, \quad \Pd_{T_0, x_0, \s, \boldsymbol{\xi}}-a.s. \;.
\]
In order to prove the above identity we argue in a similar way to the proof of item $i)$. In this case, we show first that if the predictable process given the the Doob's Decomposition Theorem converges to infinity, then the whole sum also goes to infinity. More formally, we prove the following claim first.
\begin{claim}
\begin{equation*}
\begin{split}
\sum_{k=1}^\infty \Ed_{T_0, x_0, \s, \boldsymbol{\xi}}\left(\mathbb{1}\{\tau_{\rm exit}(X_{\s k})=\infty\} | \mathcal{F}_{\s k} \right)=\infty, \text{ a.s.} & \implies \sum_{k=1}^\infty \mathbb{1}\{\tau_{\rm exit}(X_{\s k})=\infty\}=\infty, \text{ a.s.}\;.
\end{split}
\end{equation*}
\end{claim}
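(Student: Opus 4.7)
The plan is to prove the claim by a standard conditional Borel--Cantelli argument of the same flavor used in the proof of item (i), using Doob's decomposition and Theorem 5.3.1 in \cite{Du}.

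First I would set up the Doob decomposition of the partial sums. Define
\begin{equation*}
J_n := \sum_{k=1}^n \mathbb{1}\{\tau_{\rm exit}(X_{sk})=\infty\}, \qquad A_n := \sum_{k=1}^n \Ed_{T_0, x_0, \s, \boldsymbol{\xi}}\bigl(\mathbb{1}\{\tau_{\rm exit}(X_{sk})=\infty\} \mid \mathcal{F}_{sk}\bigr),
\end{equation*}
and let $M_n := J_n - A_n$. Since $\tau_{\rm exit}(X_{sk})$ is not $\mathcal{F}_{sk}$-measurable but $A_n$ is the corresponding predictable compensator, $\{M_n\}_{n\geq 0}$ is a zero-mean martingale with respect to $\{\mathcal{F}_{sk}\}_{k}$ whose increments $M_{n+1}-M_n$ lie in $[-1,1]$, hence are uniformly bounded.

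The key step is then to invoke Theorem 5.3.1 in \cite{Du}: for a martingale with bounded increments, almost surely either $M_n$ converges to a finite limit or $\limsup_n M_n = +\infty$ and $\liminf_n M_n = -\infty$. In particular, the event $\{M_n \to -\infty\}$ has probability zero. Now argue by contradiction on the event $E := \{A_n \to \infty\} \cap \{J_n \not\to \infty\}$. On $E$, since $J_n$ is non-decreasing and non-negative, it converges to a finite limit $L < \infty$, so
\begin{equation*}
M_n = J_n - A_n \longrightarrow L - \infty = -\infty,
\end{equation*}
which by Durrett's theorem occurs with probability zero. Therefore $\Pd_{T_0, x_0, \s, \boldsymbol{\xi}}(E) = 0$, and on the event $\{A_n \to \infty\}$ we have $J_n \to \infty$ almost surely, which is precisely the claim.

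I do not expect a real obstacle: the only slightly delicate point is verifying that $\{M_n\}_{n\geq 0}$ is genuinely a martingale for the chosen filtration, which requires that $X_{sk}$ (and hence whether $\tau_{\rm exit}(X_{sk})=\infty$ is measurable in the appropriate way) fits into the conditioning properly. This is handled exactly as in item (i), where $\mathcal{F}_k$ was defined so that $T_{k+1}$ is $\mathcal{F}_k$-measurable; here we use the same filtration restricted to the subsequence of times $sk$, and the conditional expectation definition of $A_n$ makes $M_n$ a martingale by construction.
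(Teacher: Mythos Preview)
Your proposal is correct and takes essentially the same approach as the paper: both write the Doob decomposition $J_n = M_n + A_n$ with $M_n$ a bounded-increment martingale, invoke Theorem~5.3.1 in \cite{Du}, and conclude that $A_n\to\infty$ forces $J_n\to\infty$. Your contrapositive phrasing (on $\{A_n\to\infty\}\cap\{J_n\not\to\infty\}$ one would have $M_n\to-\infty$, which is ruled out) is logically equivalent to the paper's direct case analysis.
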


\begin{claimproof}
Let $R_n=\sum_{k=1}^n \mathbb{1}\{\tau_{\rm exit}(X_{\s k})=\infty\}$, we can write 
\[
R_n = M_n + A_n\;,
\]
where, 
\[
M_n :=\sum_{k=1}^n \bigg( \mathbb{1}\{\tau_{\rm exit}(X_{\s k})=\infty\}- \Ed_{T_0, x_0, \s, \boldsymbol{\xi}}\left(\mathbb{1}\{\tau_{\rm exit}(X_{\s k})=\infty\} |\mathcal{F}_{\s k} \right) \bigg)\;,
\] is a bounded increments martingale and 
\[
A_n :=\sum_{k=1}^n\Ed_{T_0, x_0, \s, \boldsymbol{\xi}}\left(\mathbb{1}\{\tau_{\rm exit}(X_{\s k})=\infty\} |\mathcal{F}_{\s k} \right)\;.\]
Using Theorem~$5.3.1$ in \cite{Du}, we know that either $\lim_n M_n$ exists and it is finite almost surely, or $\limsup_n M_n=\infty$ and $\liminf_n M_n=-\infty$ almost surely. Using the assumption that $\lim_n A_n=\infty$ almost surely, in both cases we can conclude that $\lim_n R_n=\infty$ almost surely. 
\end{claimproof}

\medskip 
{ Using the above claim, in order to prove item $ii)$ of Theorem~\ref{rec-even}, we
 are  left with showing that 
\begin{equation}\label{eq:sum}
\sum_{k=1}^\infty \Ed_{T_0, x_0, \s, \boldsymbol{\xi}}\left(\mathbb{1}\{\tau_{\rm exit}(X_{\s k})=\infty \}|\mathcal{F}_{\s k} \right)=\infty, \; \Pd_{T_0, x_0, \s, \boldsymbol{\xi}}-a.s.\;.
\end{equation}
The proof of \eqref{eq:sum} relies on  Lemma~\ref{lem:time-exits} (point $ii)$). However, when using Lemma~\ref{lem:time-exits} together with the Markov Property (which holds given the independence assumption on the environment $\boldsymbol{\xi}$) we need to account for how $\deg_{T_{sk}}(X_{sk})-{\rm leaf}_{T_{sk}}(X_{sk})$ grows with $k$. 
In order to do that, let us introduce an terminology: we say that a vertex is a {\em quasi-star} if it  has at least one neighboring leaf and  only one non-leaf neighbor. Specifically, 
$X_{n}$ is a quasi-star if ${\rm leaf}_{T_{n}}(X_n)\geq 1$ and  $\degr{n}{X_n}={\rm leaf}_{T_{n}}(X_n)+1$. 
%
Then,  we can  write
\begin{align}
\nonumber \Ed_{T_0, x_0, \s, \boldsymbol{\xi}}&\left(\mathbb{1}\{\tau_{\rm exit}(X_{\s k})=\infty\} |\mathcal{F}_{\s k} \right)=
\\ 
& \label{eq:first}\Ed_{T_0, x_0, \s, \boldsymbol{\xi}}\left(\mathbb{1}\{\tau_{\rm exit}(X_{\s k})=\infty\} \mathbb{1}\{ X_{sk} \text{ is a quasi-star} \}|\mathcal{F}_{\s k} \right) 
\\
& \label{eq:second}+  \Ed_{T_0, x_0, \s, \boldsymbol{\xi}}\left(\mathbb{1}\{\tau_{\rm exit}(X_{\s k})=\infty\} \mathbb{1}\{ X_{sk} \text{ is not a quasi-star}\}|\mathcal{F}_{\s k} \right) 
\end{align}

As far as \eqref{eq:first} is concerned, applying  the Markov Property, we obtain 
\begin{align*}
&\Ed_{T_0, x_0, \s, \boldsymbol{\xi}}\left(\mathbb{1}\{ X_{sk} \text{ is a quasi-star}\} \mathbb{1}\{\tau_{\rm exit}(X_{\s k})=\infty\} |\mathcal{F}_{\s k} \right)
\\
&=
\mathbb{1}\{ X_{sk} \text{ is a quasi-star}\} \Ed_{T_{\s k}, X_{\s k}, \s, \theta_{sk}(\boldsymbol{\xi})}\left(\mathbb{1}\{\tau_{\rm exit}(X_{0})=\infty\}\right)
\\
&
\geq\mathbb{1}\{ X_{sk} \text{ is a quasi-star}\} e^{-C}\;,
\end{align*}
where $\theta_{sk}$ denotes  the forward time shift  $\theta_{sk}(\boldsymbol{\xi}) = \{\xi_{j+sk}\}_{j \in \N}$, and in the last inequality we used that $X_{sk}$ is a quasi-star together with Lemma~\ref{lem:time-exits} (point $ii$). 
As regards \eqref{eq:second}, applying the Markov Property we have that 
\begin{align*}
&\Ed_{T_0, x_0, \s, \boldsymbol{\xi}}\left(\mathbb{1}\{ X_{sk} \text{ is not a quasi-star}\}\mathbb{1}\{\tau_{\rm exit}(X_{\s k})=\infty\} |\mathcal{F}_{\s k} \right)
\\
&=
\mathbb{1}\{ X_{sk} \text{ is not a quasi-star}\}\Ed_{T_{\s k}, X_{\s k}, \s, \theta_{sk}(\boldsymbol{\xi})}\left(\mathbb{1}\{\tau_{\rm exit}(X_{0})=\infty\}\right)\;.
\end{align*}
If $X_{sk}$ is not a quasi-star, $X_{sk}$ may have no neighboring leaves in $T_{sk}$ and thus Lemma~\ref{lem:time-exits} (point $ii$) does not necessarily apply. To be able to  apply Lemma~\ref{lem:time-exits},  we multiply by $\mathbb{1}\{ \xi_{sk}\geq 1\}$ and obtain that 
\begin{align*}
&\Ed_{T_0, x_0, \s, \boldsymbol{\xi}}\left(\mathbb{1}\{ X_{sk} \text{ is not a quasi-star}\}\mathbb{1}\{\tau_{\rm exit}(X_{\s k})=\infty\} |\mathcal{F}_{\s k} \right)
\\
&\geq 
\mathbb{1}\{ X_{sk} \text{ is not a quasi-star}\} \mathbb{1}\{ \xi_{sk}\geq 1\} \Ed_{T_{\s k}, X_{\s k}, \s,  \theta_{sk}(\boldsymbol{\xi})}\left(\mathbb{1}\{\tau_{\rm exit}(X_{0})=\infty\}\right)
\\
&
\geq\mathbb{1}\{ X_{sk} \text{ is not a quasi-star}\}\mathbb{1}\{ \xi_{sk}\geq 1\} \exp\left\{-C \left(
\deg_{T_{sk}}(X_{sk})-{\rm leaf}_{T_{sk}}(X_{sk}) \right) \right\}\;. 
\end{align*}
If $X_{sk}$ is not a quasi-star (but has at least a neighboring leaf), we do not have a straight forward  upper  bound for $\deg_{T_{sk}}(X_{sk})-{\rm leaf}_{T_{sk}}(X_{sk})$, uniformly in $sk$.  However, 
if define  
\[
c(T_{sk}):= \max_{x \in V(T_{sk})}\{\degr{sk}{x}- {\rm leaf}_{T_{sk}}(x)\}\;,
\]
it holds that  
\[
c(T_{sk}) \leq c(T_0) + \sum_{i=0}^k \mathbb{1}\{ X_{ik} \text{ is a quasi-star}\}\;.
\]
To see why the above inequality holds, note that when 
$\xi_{sk}$  new leaves are added to the tree, either $X_{sk}$ is a quasi-star (in $T_{sk+1}$) or not. If $X_{sk}$ is not a quasi-star (in $T_{sk+1}$) then the constant $c(T_{sk})$ does not change, since the degree of $X_{sk}$ in $T_{sk+1}$ has increased by $\xi_{sk}$ and all of them are leaves. If $X_{sk}$ is a quasi-star (in $T_{sk+1}$) then the constant $c(T_{sk})$ may have increased by at most one unit  (if the $\xi_{sk}$ new vertices were added to a leaf).

Overall, we obtain that 
\begin{align*}
&  \Ed_{T_0, x_0, \s, \boldsymbol{\xi}}\left(\mathbb{1}\{\tau_{\rm exit}(X_{\s k})=\infty\} |\mathcal{F}_{\s k} \right) \geq\mathbb{1}\{ X_{sk} \text{ is a quasi-star}\} e^{-C}
\\
& + \mathbb{1}\{ X_{sk} \text{ is not a quasi-star}\}\mathbb{1}\{ \xi_{sk}\geq 1\} \exp\left\{-C \left( c(T_0) + \sum_{i=0}^k \mathbb{1}\{ X_{ik} \text{ is a quasi-star}\} \right) \right\}
\;. 
\end{align*}
Note that, either $\sum_{k=1}^\infty \mathbb{1}\{ X_{sk}\text{ is a quasi-star}\} = +\infty$ or $\sum_{k=1}^\infty \mathbb{1}\{ X_{sk}\text{ is a quasi-star}\} =M< +\infty$. In the first case, clearly we obtain that $\sum_{k=1}^\infty \Ed_{T_0, x_0, \s, \boldsymbol{\xi}}\left(\mathbb{1}\{\tau_{\rm exit}(X_{\s k})=\infty\} |\mathcal{F}_{\s k} \right)= +\infty$, whereas  in the second case we obtain that 
\begin{align*}
\sum_{k=1}^\infty \Ed_{T_0, x_0, \s, \boldsymbol{\xi}}\left(\mathbb{1}\{\tau_{\rm exit}(X_{\s k})=\infty\} |\mathcal{F}_{\s k} \right)&\geq e^{-C \left(c(T_0) + M\right)}\sum_{k=1}^\infty \mathbb{1}\{\xi_{sk\geq 1}\} \mathbb{1}\{ X_{sk}\text{ is not a quasi-star}\}
\\
&
= e^{-C \left(c(T_0) + M\right)}\left(\sum_{k=1}^\infty \mathbb{1}\{\xi_{sk\geq 1}\} -M\right)\;.
\end{align*} 
The proof of \eqref{eq:sum} finally follows from noticing that condition \eqref{cond:I} implies 
\[
\sum_{k=1}^\infty \mathbb{1}(\xi_{\s k}\geq 1)=+\infty, \;\Pd_{T_0, x_0, \s, \boldsymbol{\xi}}-\text{a.s.}\;,
\] since whenever $S_n=\sum_{k=1}^n \xi_{\s k}$ converges to a finite limit, condition \eqref{cond:I} cannot hold.
}
\end{proof}
One consequence of item $(i)$ of  { Theorem~\ref{rec-even}} is the following corollary. 
\begin{corollary}\label{cor:recur}
	Consider a $(s,\boldsymbol{\xi})-\Name$ process with $s$ even and environment $\boldsymbol{\xi}$ which satisfies condition  \eqref{cond:S}. Then the walker changes its parity infinitely often almost surely.
	\end{corollary}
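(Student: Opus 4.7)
The plan is to extract the corollary as a direct consequence of the claim proved inside Theorem~\ref{rec-even}(i), together with the elementary observation about how the walker's parity can change.

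First, I would recall the local mechanism of parity changes explained right before Figure~\ref{fig:walker-parity}: the parity of $X$ at time $n$ is the parity of $\mathrm{dist}_{T_n}(X_n,\Root)+n$, and at every transition the time coordinate increases by one while $\mathrm{dist}_{T_n}(X_n,\Root)$ changes by $\pm 1$ \emph{unless} the walker traverses the self-loop at the $\Root$, in which case the distance stays at $0$ while the time increments. Consequently, a parity change occurs at step $k\mapsto k+1$ if and only if $X_k=X_{k+1}=\Root$. In particular, the total number of parity changes up to time $n$ equals
\[
J_n(\Root,\Root)=\sum_{k=1}^{n-1}\mathbb{1}\{X_k=\Root\}\mathbb{1}\{X_{k+1}=\Root\}\;.
\]

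Second, I would simply invoke the Claim proved within the proof of Theorem~\ref{rec-even}(i), which asserts precisely that under condition~\eqref{cond:S} and with $s$ even,
\[
\lim_{n\to\infty} J_n(\Root,\Root)=+\infty,\qquad \Pd_{T_0,x_0,s,\bxi}\text{-a.s.}
\]
Combining this with the identification of $J_n(\Root,\Root)$ as the number of parity changes up to time $n$ yields the corollary.

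There is no real obstacle here; the only subtlety is making sure that the self-loop is genuinely the unique mechanism for parity changes (hence the need to note, as in the preceding discussion, that a non-loop transition always alters $\mathrm{dist}_{T_n}(X_n,\Root)$ by $\pm 1$). Once that is spelled out, the statement follows immediately and no new probabilistic input beyond Theorem~\ref{rec-even}(i) is required.
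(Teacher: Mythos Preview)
Your proposal is correct and mirrors exactly what the paper does: the corollary is stated without a separate proof, as a direct consequence of the Claim in the proof of Theorem~\ref{rec-even}(i) that $\lim_n J_n(\Root,\Root)=+\infty$ almost surely, combined with the observation (made in the discussion preceding Figure~\ref{fig:walker-parity}) that parity changes occur precisely at self-loop traversals.
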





\subsection{Null recurrence driven by distinct local behaviors in i.i.d environment} \label{sec:iid}
In this part, we consider the particular case where the environment process $\boldsymbol{\xi}$ is a sequence of i.i.d. random variables. The goal is to extract finer information about the exiting time $\tau_{\mathrm{exit}}$, specifically, regarding its expected value from different initial conditions.

By our discussion about conditions \eqref{cond:S} and \eqref{cond:I} at the beginning of this section and Proposition~\ref{prop:null-recurr} it follows that under i.i.d. environments with non-zero finite mean the~$(2k,\boldsymbol{\xi})-\Name$ is null recurrent. However, we can identify two distinct regimes of the null recurrence for these processes. 
These two regimes are explained in terms of the random walk $Z$ defined { in  Equation~\eqref{def:Z}}. Roughly speaking what we observe is that, under certain environment conditions, the random walk $Z$ evolves in its graph with infinite mean time transition and under other environment conditions, it indeed evolves with finite mean time transitions.

Below we state the main result of this section and before proving it we explain how it is related with the two possible behaviors of the auxiliary random walk $Z$. Moreover, in order to simplify the statement of the theorem, we use the following notation
\[
\mathbb{P}_{\ell+1,\Root, 2k, \boldsymbol{\xi}} \text{ and } \mathbb{E}_{\ell+1,\Root, 2k, \boldsymbol{\xi}}\;,
\]
to denote the \Name{} process whose initial state $(T_0,x_0)$ is the root with $\ell$ leaves and a self-loop.
\begin{theorem}\label{thm:totti} Consider a $(2k, \boldsymbol{\xi})-\Name$ process, with  
independent environment $\boldsymbol{\xi}=\{\xi_{j}\}_{j \in \N}$  satisfying condition  \eqref{cond:Mr}$_1$.
Then, $\Name$ is null recurrent. Furthermore:
	\begin{enumerate}
		\item[(i)] If $k < \mu := E(\xi_1)$, then $\Ed_{\ell +1, \Root, 2k, \boldsymbol{\xi}} \left(\tau_{\rm exit}\right) = \infty$ for every $\ell \ge 1$; \\
		
		\item[(i*)] If $\xi_1 \equiv k$ or $k = \mu$ and $\xi_1$ satisfies
		\begin{equation} \label{modd}
		\limsup_{n \rightarrow \infty} \frac{\log(n)^2}{n} \log P\left(\xi > n/\log(n)\right) = -\infty\;,
		\end{equation}
		then $\Ed_{\ell +1, \Root, 2k, \boldsymbol{\xi}} \left(\tau_{\rm exit}\right) = \infty$ for every $\ell \ge 1$; \\
		
		\item[(ii)] If $k > \mu$ then there exists $\gamma > 0$ such that $\Ed_{\ell +1, \Root, 2k, \boldsymbol{\xi}} \left(\tau_{\rm exit}\right) \ge \gamma \ell$ for every $\ell \ge 1$. Moreover, if there exists $\varepsilon>0$ such that $\boldsymbol{\xi}$ satisfies condition \eqref{cond:Mr}$_{2+\varepsilon}$, then 
		\[
		\Ed_{\ell +1, \Root, 2k, \boldsymbol{\xi}} \left(\tau_{\rm exit}\right) = \Theta(\ell)\;.
		\]
	\end{enumerate}
\end{theorem}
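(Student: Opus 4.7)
The plan is to reduce $\tau_{\rm exit}$ to an explicit functional of the i.i.d.\ random sum $S_m:=\sum_{i=0}^m\xi_{2ki}$ and then analyze the three regimes separately. First, the null recurrence: under \eqref{cond:Mr}$_1$ the SLLN gives $\limsup S_n/n=\mu<\infty$, so \eqref{cond:S} holds with $g(n)=n$, while (excluding the degenerate case $\mu=0$) $P(\xi_1\ge 1)>0$ provides \eqref{cond:UE}; Proposition~\ref{prop:null-recurr} then yields null recurrence.

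The core of the proof is a deterministic computation. Up to $\tau_{\rm exit}$ the walker is confined to the star around the root; right after the $m$-th leaf-addition (at time $2mk$) the root has $\ell+S_m$ leaves plus the self-loop. Escaping at an even time requires using the self-loop and then moving to a leaf, so each of the $k$ pairs of consecutive steps inside the block $[2mk,2(m+1)k)$ carries the same escape probability $p_m=(\ell+S_m)/(\ell+1+S_m)^2$, and these $k$ attempts are conditionally independent given $\bxi$. This yields
\[
\Pd_{\ell+1,\Root,2k,\bxi}\bigl(\tau_{\rm exit}>2nk \mid \bxi\bigr)=\prod_{m=0}^{n-1}(1-p_m)^k,
\]
and, since $\tau_{\rm exit}$ takes even values, $\Ed_{\ell+1,\Root,2k,\bxi}[\tau_{\rm exit}]\asymp k\sum_{n\ge 0} E_\xi\bigl[\prod_{m=0}^{n-1}(1-p_m)^k\bigr]$.

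For (i) and (i*) the plan is to push through the heuristic $-\log\prod_{m<n}(1-p_m)^k \approx k\sum_{m<n}p_m \approx (k/\mu)\log n$, which formally gives $\prod\asymp (\ell/(\ell+\mu n))^{k/\mu}$. For $k<\mu$ I would use the a.s.\ upper bound $S_m\le (\mu+\delta)m$ (valid eventually by SLLN) with $\delta<\mu-k$, so that the conditional lower bound on the product has exponent $k/(\mu+\delta)<1$; Fatou's lemma then forces $\sum_n E_\xi[\prod]=\infty$. For $\xi_1\equiv k$ a direct substitution gives $\prod(1-p_m)^k\asymp 1/m$ and a harmonic divergence. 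In the borderline $k=\mu$ the tail assumption \eqref{modd} is the crucial ingredient: it provides concentration of $\sum p_m$ around $(1/\mu)\log n$ sharp enough for $E_\xi[\prod(1-p_m)^k]\gtrsim 1/n$ to survive the outer expectation, which still yields divergence of the harmonic sum.

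For (ii) the lower bound $\Ed[\tau_{\rm exit}]\ge \gamma\ell$ is immediate from $p_m\le 1/(\ell+1)$, which gives $P(\tau_{\rm exit}>2j)\ge (1-1/(\ell+1))^j$ and hence $\Ed[\tau_{\rm exit}]\ge 2(\ell+1)$. The matching $\Theta(\ell)$ upper bound under \eqref{cond:Mr}$_{2+\varepsilon}$ is the hardest part of the argument. I would fix $\delta\in(0,k/\mu-1)$ so that $\alpha:=k/((1+\delta)\mu)>1$, and on the good event $G_n=\{S_m\le(1+\delta)\mu m\text{ for all }m_0\le m\le n\}$, via the pointwise bound $p_m\gtrsim 1/(\ell+S_m)$ and an integral comparison, obtain $\prod_{m<n}(1-p_m)^k\mathbb{1}_{G_n}\le C(\ell/(\ell+cn))^\alpha$; this piece contributes $\lesssim \ell/(\alpha-1)$ after summing in $n$. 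The complementary large-deviation contribution is the real obstacle: here I would invoke Marcinkiewicz--Zygmund with the $(2+\varepsilon)$-moment to get $P(S_m>(1+\delta)\mu m)\lesssim m^{-(1+\varepsilon/2)}$, and couple it with the Markov recursion $\psi(z)=1+(1-p(z))^k E_\xi[\psi(z+\xi)]$ for the expected number of blocks to escape, looking for a linear super-solution $\psi(z)\le Cz$. Balancing the good-event and the tail contributions so that the total is genuinely $O(\ell)$ rather than $\ell^{1+o(1)}$ is the delicate calculation I expect to require the most care.
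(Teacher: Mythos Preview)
Your reduction and the exact two–step escape probability $p_m=(\ell+S_m)/(\ell+1+S_m)^2$ are correct (the paper works with the slightly cruder $1/(d+S_m)$, the probability of touching the self-loop, which agrees with $p_m$ up to a summable $1/(d+S_m)^2$ correction). For (i), and for $\xi_1\equiv k$ in (i*), your SLLN-plus-lower-bound route is essentially the paper's (it uses Egorov rather than Fatou to produce a set of positive measure on which $S_m\le(\mu+\delta)m$ uniformly, but the content is the same). Your lower bound in (ii) via the crude inequality $p_m\le 1/(\ell+1)$ is correct and noticeably simpler than the paper's, which redoes the Egorov/integral comparison to get the same linear bound.

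There are two genuine gaps. First, in (i*) at the critical point $k=\mu$, ``concentration sharp enough for $E_\xi[\prod]\gtrsim 1/n$'' is not an argument. The plain SLLN only gives $S_m\ge(\mu-\delta)m$ with a fixed $\delta>0$, which produces an exponent $k/(\mu-\delta)>1$ and a \emph{convergent} series, so it is useless here. The paper identifies the missing tool: condition~\eqref{modd} is precisely the hypothesis of a moderate deviation principle (Eichelsbacher--L\"owe) at scale $b_n=n/\log n$, yielding $|S_m/m-\mu|\le k/(2\log m)$ eventually on a set of positive probability; this is just sharp enough to make $\prod(1-p_m)^k\gtrsim c/(n\log n)$ on that set, and the sum still diverges.

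Second, your good-event/bad-event split for the upper bound in (ii) fails as written: with $m_0$ fixed, $P(G_n^c)\ge P(S_{m_0}>(1+\delta)\mu m_0)$ is a positive constant independent of $n$, so $\sum_n E[\prod\cdot\mathbb 1_{G_n^c}]$ is still divergent and you have controlled nothing. The paper's fix is to replace the deterministic $m_0$ by the \emph{random} last exceedance time $M_\delta:=\sup\{m:|S_m/m-\mu|\ge\delta\}$; splitting $\sum_n$ at $M_\delta$, the first piece is $\le E[M_\delta+1]$ and on the second piece one has $S_m\le(\mu+\delta)m$ deterministically. Baum--Katz then gives $E[M_\delta^{\alpha}]<\infty$ under (M)$_{2+\varepsilon}$ for a suitable $\alpha\in(1,1+\varepsilon]$, and the cross-term $E[(d+M_\delta)^{\alpha}]\cdot\sum_{n}(d+n(\mu+\delta))^{-\alpha}$ is $O(\ell)$. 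Your recursive super-solution $\psi(z)\le Cz$ is a genuine alternative route, and if you carry it out cleanly (check $(1-p(w))^kw\le w-k+O(1/w)$ and run the monotone iteration $\psi_{n+1}(z)=1+E[(1-p(z+\xi))^k\psi_n(z+\xi)]$) it actually gives $O(\ell)$ with only $E[\xi]<\infty$, which is stronger than what the paper states. But mixing it with a Marcinkiewicz--Zygmund tail bound that does not do the job leaves the argument incomplete; commit to the recursion and drop the good-event split.
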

Let us say some words about the above results. The reader may notice the lack of a general initial state on the statement above, however, the general case is implicitly covered. Note that regardless of the initial state of the $\Name$ process items \textit{(i)} and \textit{(i*)} imply that when $X$ lands on a leaf at an even step by the Strong Markov Property the {process $Z$} will take, in average, infinity time to move, although it will move almost surely since $X$ is recurrent. On the other hand, as we explained before the more non-leaves neighbors a vertex has the more likely is $X$ to leave it, thus in item \textit{(ii)}, under higher moment conditions, $X$ always moves in finite mean time.

We will prove items \textit{(i)} and \textit{(ii)} separately.  The proof of item \textit{(i*)} is similar to that of item \textit{(i)}, but it requires a moderate deviations result for sums of i.i.d random variables. Condition (\ref{modd}) is a condition to guarantee this moderate deviations result. This sort of condition implies finite second moment and is implied by the existence of exponential moments. The interested reader may consult \cite{EL}.


\begin{proof}[Proof of (i) and (i*) in Theorem \ref{thm:totti}] We begin noticing that 
	\[
	\mathbb{E}_{\ell +1, \Root, 2k, \boldsymbol{\xi}}\left(\tau_{\rm exit}\right) <\infty \iff \sum_{n\geq 1} \Pd_{\ell +1, \Root, 2k, \boldsymbol{\xi}} \left(\tau_{\rm exit}>2kn\right)<\infty\;.
	\] 
Since in this case the initial state will be fixed, we omit all the indices of $\Pd$ and $\Ed$ throughout the proof and let $d$ be $\ell +1$. 
From \eqref{tailtau} we have
\begin{align}\label{eq:1}
\sum_{n\geq 1}\mathbb{P}\left(\tau_{\rm exit}>2kn\right) = E \left(\sum_{n\geq 1}\exp\left\{
k\sum_{i=0}^{n-1}\log\left(1-\frac{1}{d+S_i}\right)\right\} \right)\;.
\end{align}
For $n_0\geq 1$ a finite positive integer (to be suitably chosen later)  we can decompose the  summation in Equation~\eqref{eq:1} as 
\begin{align*}
E \left(\sum_{n=1}^{n_0}\exp\left\{
k\sum_{i=0}^{n-1}\log\left(1-\frac{1}{d+S_i}\right)\right\} \right) + E \left(\sum_{n\geq n_0+1}\exp\left\{
k\sum_{i=0}^{n-1}\log\left(1-\frac{1}{d+S_i}\right)\right\} \right)\;.
\end{align*}
Since the first term satisfies
\[
E \left(\sum_{n=1}^{n_0}\exp\left\{
k\sum_{i=0}^{n-1}\log\left(1-\frac{1}{d+S_i}\right)\right\} \right)\leq n_0<\infty\;,
\]
we obtain that 
\[
\mathbb{E}\left(\tau_{\rm exit}\right)<\infty \iff E \left(\sum_{n\geq n_0+1}\exp\left\{
k\sum_{i=0}^{n-1}\log\left(1-\frac{1}{d+S_i}\right)\right\} \right):=\beta < \infty \;.
\]
The above defined quantity $\beta$ satisfies the following bound:
\begin{align*}
\quad \beta &\geq \exp\left\{kn_0\log\left(1-\frac{1}{d}\right)\right\}E \left(\sum_{n\geq n_0+1} \exp\left\{
k\sum_{i=n_0}^{n-1}\log\left(1-\frac{1}{d+S_i}\right)\right\} \right)\;,
\end{align*}
where we used that  $e^{\sum_{i=1}^{n_0-1}\log \left(1-\frac{1}{d+S_i} \right)}\leq 1$ pointwise. 
\smallskip

Now before we continue the proof, we will need specify some variables that will depend on the cases presented in $(i)$ and $(i*)$. 

\medskip

\noindent \underline{Case $k < \mu$:} By the Strong Law of Large Numbers, $\frac{S_n}{n}$ converges to $\mu$ almost surely.
We shall need something stronger and in particular that the convergence above is uniform in some subset of positive probability. Using Egorov's theorem, we know that for all $\varepsilon>0$ there exists a measurable set $B_\varepsilon$ with $P(B_\varepsilon)<\varepsilon$, such that $\frac{S_n}{n}$ converges uniformly to $\mu$ on $B^c_\varepsilon$. Due to the uniform convergence,  $\forall \omega \in B^c_\varepsilon$, $\forall \delta >0$ there exists~$n_0=n_0(\delta)$ such that for all~$n\geq n_0$ it holds that 
\[
n(\mu - \delta)\leq S_n(\omega) \leq n(\mu+\delta)\; .
\]
Here we fix $\delta_n = \delta$ for every $n \ge 1$. \\

\noindent \underline{Case $\xi \equiv k$:} This case is the simplest one, but the reader can follow the rest of the proof considering $B_\varepsilon = \emptyset$ for all $\varepsilon >0$ and $\delta_n = 0$ for every $n \ge 1$. \\

{
\noindent \underline{Case $k = \mu$ under Equation (\ref{modd}):} Here we rely on Theorem 2.2 in \cite{EL}. The result is the following: Let $(\zeta_n)_{n\ge 1}$ be a sequence of i.i.d. integrable random varibles with mean zero and $(b_n)_{n\ge 1}$ be a sequence of real numbers such that
\begin{equation}
\label{MDP}
\frac{b_n}{\sqrt{n}} \uparrow \infty \quad and \quad \frac{b_n}{n} \downarrow 0\;.
\end{equation}
Then 
$$
\lim_{n \rightarrow \infty} \frac{n}{b_n^2} \log \big( n P(|\zeta_1|> b_n) \big) = - \infty\;,
$$
is equivalente to the claim that $\frac{1}{b_n} \sum_{j=1}^n \zeta_j$ satisfies a moderate deviation principle with good rate function. This moderate deviation principle has speed $b_n^2/n$ (see the end of page 212 in \cite{EL}). We will not state the moderate deviation principle here, we just point out that it implies that for every $M > 0$, there exists $c = c(M) > 0$ such that
$$
P\left( \frac{1}{b_n} \sum_{j=1}^n \zeta_j > M \right) \le \exp\left \lbrace-c \frac{b_n^2}{n}\right \rbrace\;,
$$
for all $n\ge 1$. Here $\zeta_n = \xi_n - E[\xi_n]$ and we choose $b_n = 1$ and $b_n = n/\log(n)$ for $n\ge 2$, the reader can check that \eqref{modd} is equivalent to \eqref{MDP} (since $\xi$'s are non-negative, we don't need to subtract the mean value in \eqref{modd}). Thus a moderate deviation result holds and there exists a constant $c > 0$ such that
$$
P\left( \left| \frac{S_n}{n} - \mu \right| > \frac{k}{2\log(n)} \right) \le \exp\left \lbrace-c \frac{n}{\log(n)^2}\right \rbrace \, ,
$$
for all $n\geq 1$.
By Borel-Cantelli Lemma, we have that almost surely there exists $\tilde{n}_0$ (random) such that
$$
n \left( \mu - \frac{k}{2\log(n)} \right) \leq S_n \leq n \left( \mu + \frac{k}{2\log(n)} \right) \ \ \forall \, n\ge \tilde{n}_0\;.
$$ 
Then, simple arguments allow us to obtain uniform bounds on large sets, i.e, for all $\varepsilon > 0$ there exists a measurable set $B_\varepsilon$ with 
$P (B_\varepsilon)<\varepsilon$ and $n_0 > 0$, such that $\forall \, \omega \in B^c_\varepsilon$ and $\forall \, n\geq n_0$ it holds that 
\[
n(\mu - \delta_n)\leq S_n(\omega) \leq n(\mu+\delta_n)\; ,
\]
where $\delta_n = k/2\log(n)$.}

Consider the sets $B_\varepsilon$, the integers $n_0$ and the sequence $(\delta_n)_{n\ge 1}$ defined as above according to the distinct cases. By simply integrating over $B_{\varepsilon}^c$ we have
\begin{equation*}
E \left(\sum_{n\geq n_0+1} \exp\left\{k\sum_{i=n_0}^{n-1}\log\left(1-\frac{1}{d+S_i}\right)\right\} \right) \geq 
E \left(\sum_{n\geq n_0+1} \exp\left\{
k\sum_{i=n_0}^{n-1}\log\left(1-\frac{1}{d+S_i}\right)\right\}\mathbb{1}_{B^c_{\varepsilon}} \right).
\end{equation*}
Using the above bound and recalling that $S_n(\omega)\geq n(\mu-\delta_n)$ for every $n \ge n_0$ on $B^c_{\varepsilon}$ we obtain
\begin{align*}
\beta &\geq  \exp\left\{kn_0\log\left(1-\frac{1}{d}\right)\right\}E \left(\sum_{n\geq n_0+1} \exp\left\{
k\sum_{i=n_0}^{n-1}\log\left(1-\frac{1}{d+S_i}\right)\right\} \mathbb{1}_{B^c_{\varepsilon}}\right)
\\&\geq  \exp\left\{kn_0\log\left(1-\frac{1}{d}\right)\right\}E
\left(\sum_{n\geq n_0+1} \exp\left\{
k\sum_{i=n_0}^{n-1}\log\left(1-\frac{1}{d+i\left(\mu-\delta_i \right)}\right)\right\}\mathbb{1}_{B^c_{\varepsilon}} \right)\;.
\end{align*}
Since $\log(1-x)\geq -x -\frac{x^2}{1-x}$,  for $0\leq x<1$, we have that  
\begin{align*}
\sum_{i=n_0}^{n-1}&\log\left(1-\frac{1}{d+i\left(\mu-\delta_i \right)}\right)\geq \\&-\sum_{i=n_0}^{n-1}\frac{1}{d+i\left(\mu -\delta_i \right)} - \sum_{i=n_0}^{n-1}\frac{1}{\left(d+i \left(\mu -\delta_i \right)\right)\left(d+i\left(\mu -\delta_i \right)-1\right)}\;.
\end{align*}
Note that the second term on the RHS of the above inequality is finite. I.e.,
\[
L:=\sum_{i=1}^{\infty}\frac{1}{\left(d+i\left(\mu -\delta_i \right)\right)\left(d+i\left(\mu - \delta_i \right)-1\right)}< + \infty\;,
\]
hence 
\begin{align*}
\beta &\geq \exp\left\{kn_0\log\left(1-\frac{1}{d}\right)\right\}e^{-kL}P(B^c_{\varepsilon})\sum_{n\geq n_0+1} \exp\left\{
-k\sum_{i=n_0}^{n-1}\frac{1}{d+i\left(\mu -\delta_i \right)}\right\}
\\
&= c \sum_{n\geq n_0+1} \exp\left\{
-k\sum_{i=n_0}^{n-1}\frac{1}{d+i\left(\mu - \delta_i \right)}\right\}\;,
\end{align*}
where $c= e^{kn_0\log\left(1-\frac{1}{d}\right)}e^{-kL}P(B^c_{\varepsilon})$.
Thus, we are left with showing that the above  summation diverges. 
Recall that for every decreasing function $f$ it holds that 
\[
\int_{a}^{b+1} f(t)dt \leq \sum_{i=a}^b f(i)\leq \int_{a-1}^{b} f(t)dt\;.
\]
For the case $\delta_n = \delta$ for every $n$ (even when $\delta = 0$), we have that
\begin{align*}
&-\sum_{i=n_0}^{n-1}\frac{1}{d+i\left(\mu-\delta\right)}\geq -\int_{n_0-1}^{n-1}\frac{1}{d+t\left(\mu-\delta\right)}dt\\
&= -\frac{1}{\mu-\delta}
\log\left(d+(n-1)(\mu-\delta_i)\right)+ \frac{1}{\mu-\delta}
\log\left(d+(n_0-1)(\mu-\delta)\right)\;.
\end{align*}
Thus,
\begin{align*} 
\beta &\geq c \;e^{\frac{k}{\mu-\delta}\log\left(d+(n_0-1)(\mu-\delta)\right)}\sum_{n\geq n_0+1}e^{-\frac{k}{\mu-\delta} \log\left(d+(n-1)(\mu-\delta)\right)}
\\ \nonumber
&=C\sum_{n\geq n_0+1} \frac{1}{\left( d+(n-1)(\mu-\delta)\right)^\frac{k}{\mu-\delta} }\;.
\end{align*}
By the assumption that $k<\mu$, we know that there exists a $\varepsilon_0>0$ such that $k=\mu-\varepsilon_0$. Choosing $\delta=\varepsilon_0$ and $n_0$ accordingly we obtain that the right-hand side diverges and so thus~$\beta$, proving the items $(i)$ and $(i*)$ for the particular case in which $\xi$ is constant. 

In order to conclude the proof of these items, now consider the remainder cases $k = \mu$ and $\delta_n = k/2\log(n)$. Then  
\begin{align*}
-k\sum_{i=n_0}^{n-1}\frac{1}{d+i\left(\mu-\delta_i \right)} & \geq 
 - \sum_{i=n_0}^{n-1}\frac{1}{i(1-1/2\log(i))} \\
& \geq - \sum_{i=n_0}^{n-1}\frac{1}{i}  - \sum_{i=n_0}^{n-1}\frac{1}{i\log(i)}  \\ 
& \geq \gamma_0 - \log(n) - \log\log(n) \; ,
\end{align*}
where $\gamma_0$ is a positive constant that only depends on $n_0$ (here the reader can use the integration step as above having in mind that the primitive of $1/t\log(t)$ is $\log\log(t)$. Therefore
$$
\beta \ge C \sum_{n\geq n_0+1} \frac{1}{n \log(n)} = \infty \; .
$$
\end{proof}
We now prove the second and last part of Theorem~\ref{thm:totti}. 
\begin{proof}[Proof of (ii) in Theorem \ref{thm:totti}] We begin with proving a lower bound for the expectation of the exit time. 

\noindent \underline{\textit{Step one:} lower bound on $\Ed_{\ell +1, \Root, 2k, \boldsymbol{\xi}} \left(\tau_{\rm exit}\right)$.} Recall that
\begin{equation*}
\mathbb{E}\left(\tau_{\rm exit}\right)\geq \sum_{n\geq 1}\mathbb{P}\left(\tau_{\rm exit}\geq sn\right)=\sum_{n\geq 1}E\left(\exp\left\{k\sum_{i=0}^{n-1}\log(1-\frac{1}{d+S_i})\right\}\right)\;.
\end{equation*}
Since $\log (1-x)\geq -x-x^2\,$, for $0\le x \le \frac{1}{2}$ we get
\begin{align*}
\mathbb{E}\left(\tau_{\rm exit}\right) \ge \sum_{n\ge 1}E\left( \exp\left\{-k\left[\sum_{i=0}^{n-1}\frac{1}{d+S_i} + \sum_{i=0}^{n-1}\frac{1}{(d+S_i)^2}\right ]\right\} \right)\;.
\end{align*}
By Large Law of Large Numbers $\frac{S_i}{i} - \mu \to 0$ as $i\to\infty$ (a.s.). Hence given $0<\delta<\mu$ there exist (a.s.) some random $M$ such that for all $i\ge M$, we have $S_i\geq (\mu-\delta)i$. Then for $n > M$,
\begin{align*}
&\sum_{i=0}^{n-1}\frac{1}{d+S_i} + \sum_{i=0}^{n-1}\frac{1}{(d+S_i)^2}\\
&= \sum_{i=0}^{M}\frac{1}{d+S_i} + \sum_{i=0}^{M}\frac{1}{(d+S_i)^2} + \sum_{i=M+1}^{n-1}\frac{1}{d+S_i} + \sum_{i=M+1}^{n-1}\frac{1}{(d+S_i)^2}\\
&\le \sum_{i=0}^{M}\frac{1}{d+S_i} + \sum_{i=0}^{M}\frac{1}{(d+S_i)^2} +  \sum_{i=M+1}^{n-1}\frac{1}{d+i[\mu-\delta]} + \sum_{i=M+1}^{n-1}\frac{1}{(d+i[\mu-\delta])^2}\\
&\le 2(M+1) + \frac{1}{\mu-\delta}\left[\log\left(d+[\mu-\delta](n-1)\right)-\log\left(d+[\mu-\delta]M\right)\right] +\varepsilon(d)\;,
\end{align*}
where $\varepsilon(d)\to 0$ as $d\to\infty$. By Egorov's theorem, we can find some measurable set $B$ and some constant $M$ such that  $P(B)>0$ and on $B$, $S_i\geq [\mu-\delta]i$, for all $i\geq M$. {Hence,
\begin{align*}
\mathbb{E}\left(\tau_{\rm exit}\right)&\ge\sum_{n\geq 1}\int_{B}\exp\left\{-k \left[2(M+1) + \frac{\left[\log\left(d+[\mu-\delta](n-1)\right)-\log\left(d+[\mu-\delta]M\right)\right]}{\mu-\delta} +\varepsilon(d)    \right]\right\}dP\\
&=\exp\left\{-k \left[2(M+1)+\varepsilon(d)\right]\right\} \left(d+[\mu-\delta]M\right)^{\frac{k}{\mu-\delta}} \sum_{n\geq 1}\frac{1}{\left(d+[\mu-\delta](n-1)\right)^{\frac{k}{\mu-\delta}}}P(B)\;.
\end{align*}
	To simplify notation let us denote by $\beta := \mu - \delta$ and $\alpha := k/\beta$. Then,
	\begin{equation*}
		\sum_{n\ge 1}\frac{1}{\big[d + (\mu-\delta)(n-1)\big]^{\frac{k}{\mu - \delta}}} \ge \int_0^{+\infty}\frac{1}{(d+\beta x)^{\alpha}}dx = \frac{1}{\beta(\alpha -1 )d^{\alpha - 1}}\; .
	\end{equation*}
	Hence,
	\begin{align*}
		&\big[d+(\mu - \delta)M\big]^{\frac{k}{\mu - \delta}}\sum_{n\ge 1}\frac{1}{\big[d + (\mu-\delta)(n-1)\big]^{\frac{k}{\mu - \delta}}}\\&\ge (d + \beta M)^{\alpha}\frac{1}{\beta(\alpha -1 )d^{\alpha - 1}}\\
		&=\frac{d}{\beta(\alpha - 1)}\Big(1+\frac{\beta M}{d}\Big)^{\alpha}\; .
	\end{align*}
	Thus,
	\begin{equation*}
		\mathbb{E}\left(\tau_{\rm exit}\right)\ge \frac{d}{\beta(\alpha - 1)}\exp\big\{-k [2(M+1) + \epsilon(d)]\big\}\left(1+\frac{\beta M}{d}\right)^{\alpha},
	\end{equation*}
	where,
	\begin{equation*}
		\lim_{d\to\infty}\exp\big\{-k[2(M+1) + \epsilon(d)]\big\}\left(1+\frac{\beta M}{d}\right)^{\alpha} = \exp\big\{-2k(M+1)\big\}\; .
	\end{equation*}
	Thus we can find some positive $C$, which does not depend on d, such that
	\begin{equation*}
		\mathbb{E}\left(\tau_{\rm exit}\right)\ge Cd\; .
	\end{equation*} }
This proves that
\[
\Ed_{\ell +1, \Root, 2k, \boldsymbol{\xi}} \left(\tau_{\rm exit}\right) \ge C(\ell +1)\;.
\]
\noindent \underline{\textit{Step two:} upper bound on $\Ed_{\ell +1, \Root, 2k, \boldsymbol{\xi}} \left(\tau_{\rm exit}\right)$.} In this part we are going to prove the following upper bound
\begin{equation*} 
\mathbb{E}\left(\tau_{\rm exit}\right) \le C_1 + C_2 \ell^{1+\gamma} \sum_{n\geq 1} \frac{1}{\left(\ell + 1 + n \left(\mu+\delta\right)\right)^{1 + \gamma} }\;,
\end{equation*}
for environments under the assumption (\ref{cond:Mr})$_{2+\varepsilon}$. {Then an upper bound of order $\Theta(\ell)$ is obtained from estimates on the above series 
as in step one.}

For fixed $\delta>0$, let us define $M_{\delta}$ the following random variable
\begin{equation*}
M_\delta(\omega):=\sup \left\{i\geq 1: \left\vert \frac{S_i(\omega)}{i} - \mu \right\vert\geq \delta\right\}\;.
\end{equation*}
Then, we may compute the $r$-th moment of $M_{\delta}$ as follows
\[
E \left(M_\delta^r\right)=\sum_{n\geq 1}n^{r-1}P\left(M_\delta\geq n\right) \leq \sum_{n\geq 1}n^{r-1}P \left( \sup_{j\geq n} \left\vert \frac{S_j(\omega)}{j} - \mu \right\vert\geq \delta \right)\;,
\]
and, by Theorem 3 in \cite{BK}, we have the characterization below
\begin{align}\label{eq:2}
E (\xi^{r+1})<\infty \iff \forall \delta>0, \sum_{n\geq 1}n^{r-1}P\left( \sup_{j\geq n} \left\vert \frac{S_j(\omega)}{j} - \mu \right\vert\geq \delta \right)<\infty\;. 
\end{align}
By Equation~\eqref{eq:1} we can write
\begin{align*}
&\mathbb{E}\left(\tau_{\rm exit}\right)= \\
&E \left(\sum_{n=1}^{M_\delta+1}\exp\left\{
k\sum_{i=0}^{n-1}\log\left(1-\frac{1}{d+S_i}\right)\right\} \right) + E \left(\sum_{n\geq M_\delta+2}\exp\left\{
k\sum_{i=0}^{n-1}\log\left(1-\frac{1}{d+S_i}\right)\right\} \right).
\end{align*}
For the first term in the RHS, we have that, for every $\delta>0$
\[
E \left(\sum_{n=1}^{M_\delta+1}\exp\left\{
k\sum_{i=0}^{n-1}\log\left(1-\frac{1}{d+S_i}\right)\right\} \right)\leq E (M_\delta+1)< \infty\;,
\]
due to Equation~\eqref{eq:2} and the hypothesis that $E(\xi^2)<\infty$, which implies that $E(M_\delta)<\infty$. Thus, we are left to bound from above the following term
\begin{align*}
&E \left(\sum_{n\geq M_\delta+2}\exp\left\{
k\sum_{i=0}^{n-1}\log\left(1-\frac{1}{d+S_i}\right)\right\} \right)
\\
&
\leq 
E \left(\sum_{n\geq M_\delta+2}\exp\left\{
k\sum_{i=M_\delta +1 }^{n-1}\log\left(1-\frac{1}{d+S_i}\right)\right\} \right)
\\
&
\leq 
E \left(\sum_{n\geq M_\delta+2}\exp\left\{
k\sum_{i=M_\delta+1}^{n-1}\log\left(1-\frac{1}{d+ i \left(\mu+\delta\right)}\right)\right\} \right)\;.
\end{align*}
Since $\log(1-x)\leq -x$,  for $0\leq x<1$, we have that  
\begin{align*}
\sum_{i=M_\delta +1}^{n-1}&\log\left(1-\frac{1}{d+i\left(\mu+\delta\right)}\right)\leq -\sum_{i=M_\delta+1}^{n-1}\frac{1}{d+i\left(\mu+\delta\right)}\;.
\end{align*}
Bounding the sum by the integral,
\begin{equation*}
\begin{split}
-k\sum_{i=M_\delta+1}^{n-1}\frac{1}{d+i\left(\mu+\delta\right)} & \leq -\int_{M_\delta+1}^{n}\frac{1}{d+t\left(\mu +\delta\right)}dt\\
& = -\frac{1}{\mu +\delta}
\log\left(d+n(\mu +\delta)\right)+ \frac{1}{\mu +\delta}
\log\left(d+(M_\delta+1)(\mu +\delta)\right)\;.
\end{split}
\end{equation*}
Thus, 
\begin{align*}
&
E \left(\sum_{n\geq M_\delta+2}\exp\left\{
k\sum_{i=M_\delta+1}^{n-1}\log\left(1-\frac{1}{d+ i \left(\mu+\delta\right)}\right)\right\} \right)
\\
&\leq 
E \left(\left((d+M_\delta +1)(E(\xi)+\delta)\right)^{\frac{k}{E(\xi)+\delta }} \sum_{n\geq M_\delta+2} \frac{1}{\left(d+ n \left(\mu +\delta\right)\right)^{\frac{k}{\mu +\delta}}} \right)
\\
&\leq 
E \left(\left((d+M_\delta +1)(\mu +\delta)\right)^{\frac{k}{\mu +\delta }}\right) \sum_{n\geq 1} \frac{1}{\left(d+ n \left(\mu +\delta\right)\right)^{\frac{k}{\mu +\delta}}} \;.
\end{align*}

Using the hypothesis that $k>\mu$, we know that there exists a $\varepsilon_0>0$ such that $k=\mu+\varepsilon_0$. Then, for every  $\delta < \varepsilon_0$ we obtain that
\[
\sum_{n\geq 1} \frac{1}{\left(d+ n \left(\mu+\delta\right)\right)^{\frac{k}{\mu+\delta}}}<\infty\;.
\]
On the other hand, using the hypotheses that there exists a $\varepsilon>0$ such that 
$E\left(\xi^{2+\varepsilon}\right)<\infty$ 
and knowing that $E\left(\xi^{r+1}\right)<\infty \implies E \left(M_\delta^{r}\right)<\infty$, we have that for every  $\delta > \frac{\varepsilon_0 -\varepsilon \mu}{1+\varepsilon}$ ,
\[
E \left( M_\delta^{\frac{k}{\mu +\delta}} \right)<\infty\;. 
\]
Choosing $\delta$ such that $\frac{\varepsilon_0 -\varepsilon \mu}{1+\varepsilon}< \delta<\varepsilon_0$, we conclude the proof of the second step. 

\end{proof}

\begin{remark}
It is interesting to note that the local behavior described in Theorem \ref{thm:totti} allows us to prove null recurrence, already established in Proposition \ref{prop:null-recurr} under much more general conditions on the environment. If $s/2 < \mu$, or $s/2 = \mu$ and \eqref{modd} holds, then transitions between vertices of the same parity take infinite mean times, thus null-recurrence follows immediately. For $s/2 > \mu$ we just pass the idea of how we can use that $\Ed_{\ell +1, \Root, 2k, \boldsymbol{\xi}} \left(\tau_{\rm exit}\right) = \Theta(\ell)$ to show null recurrence. For the sake of simplicity we consider $s=2$ which implies that $\mu < 1$. Observe that Theorem \ref{thm:totti} can be applied analogously to the loop process and, by using our coupling, the proof of null recurrence for the loop process follows the same lines as the proof for the $\Name{}$. {Recall also that every use of the self-loop is equivalent to two steps of the $\Name{}$. 
So consider a loop process $Y$ in $\{0,...,k\}$, $k\ge 2$, so that on the $j$-th time spent at vertex $1$, $\xi_j$ new loops are added to that vertex, where $\{\xi_j\}_{j\ge 1}$ is a sequence of i.i.d. integer valued random variables with $E\left(\xi_j\right) = \mu < 1$. Suppose also that $Y_0 = 1$ and vertex $1$ has initially $L-2$ loops attached to it. Now let $N$ be the number of times $Y$ visits $1$ before it visits $0$ for the first time such that uses of the self-loop do not count as a new visit. Formally
$$
N = 1 + \# \{ n : Y_{n-1} = 2, Y_{n}=1, Y_j \neq 0, j=1,...,n-2 \}\;. 
$$}
Then $N$ is stochastically bounded from below by a geometric random variable of parameter $L^{-1}$ (new loops may be attached to 1). Therefore $\Pd \left( N = r \right) \ge e^{-c \frac{r}{L}}$. Define also $\tau^i_{\rm exit}$ as the number of times that $Y$ uses a self-loop upon the $i$-th visit to $1$. Take $\gamma$ as in (ii) in the statement of Theorem \ref{thm:totti}. Note that
$$
\mathbb{E} \left(\tau_{\rm exit}^1\right) \ge \gamma \ge \gamma \mu \; .
$$
{
We will show by induction that
\begin{equation}
\label{estnull}
1 + \sum_{i=1}^r \mathbb{E} \left(\tau_{\rm exit}^i\right) \ge (1 + \gamma \mu)^{r} \; .
\end{equation}
Suppose that \eqref{estnull} holds for some $r\ge 1$ and let $l_i$ be the number of self-loops added to site $1$ on the i-th visit, then
each $l_i$ is the sum of $\tau_{\rm exit}^{i}$ distinct variables taken from the sequence $\{\xi_j\}_{j\ge 1}$ and one can check that 
$\mathbb{E}\left(l_i\right) = \mu \mathbb{E} \left(\tau_{\rm exit}^{i}\right)$. Therefore, by (ii) in Theorem \ref{thm:totti}, we have that
$$
\mathbb{E} \left(\tau_{\rm exit}^{r+1}\right) = \mathbb{E}\left( \mathbb{E}\left(\tau_{\rm exit}^{r+1} | l_1,...,l_r\right)  \right) 
\ge \mathbb{E} \big[ \gamma \big( 1+\sum_{i=1}^r l_i \big) \big] \ge \gamma \mu \big( 1 + \sum_{i=1}^r \mathbb{E} \left(\tau_{\rm exit}^{i}\right) \big) 
= \gamma \mu (1 + \gamma \mu)^{r} \;, 
$$
and
$$
1 + \sum_{i=1}^{r+1} \mathbb{E} \left(\tau_{\rm exit}^i\right) \ge  (1 + \gamma \mu)^{r} + \gamma \mu (1 + \gamma \mu)^{r} = (1 + \gamma \mu)^{r+1}\;.
$$
Thus we have proved \eqref{estnull}.} Now put $\nu$ as the first hitting time of $0$ by Y, then 
$$
\mathbb{E} \left( \nu | N = r\right) \ge 1+ \sum_{i=1}^{r} \mathbb{E} \left(\tau_{\rm exit}^i\right) \ge (1 + \gamma \mu)^{r}  \, ,
$$
for every $r \ge 1$. Thus
$$
\mathbb{E} \left( \nu \right) = \sum_{r=1}^{\infty} \mathbb{E} \left( \nu | N = r\right) \, \Pd \left( N = r \right) \ge \sum_{r=1}^{\infty} \Big( (1 + \gamma \mu) e^{-\frac{c}{L}} \Big)^r \, .
$$
Now simply choose $L$ such that $(1 + \gamma \mu) e^{-\frac{c}{L}} > 1$, the above series will diverge and $\mathbb{E} \left( \nu \right) = \infty$.
\end{remark}


\section{The Case s odd: Ballisticity of the walker in {\Name{}}}\label{sec:odd}
In this section we prove Theorem \ref{theo:ballistic}, which states ballistic behavior of $\Name{}$ for $s$ odd  under condition (\ref{cond:UE}). To show ballisticity, i.e.,  
\begin{equation*}
\liminf_{n \rightarrow \infty}\frac{\mathrm{dist}_{T_n}(X_n,\Root)}{n}  \geq c, \; \Pd_{T_0,x_0,s,\bxi}\text{- almost surely}\;,
\end{equation*}
 we rely on a general criterion which we extrapolate from the proof of ballisticity for BGRW given in \cite{FIORR}. We believe this general criterion may be of independent interest and may be applied to a wider class of similar processes.

\subsection{General criterion for ballisticity}
To provide some intuition about the general criterion for ballisticity in \Name{}, let us begin  
building a bridge with the classical theory of  Random Walk on Random Environments. In the latter context, the main mechanism behind ballisticity is the concept of \textit{regeneration time}, {see~\cite{S}}. Roughly speaking, it says the walk has regenerated if it does not return to a half-space after a certain time. This implies that from time to time, the walk is always exploring independent portions of the environment.
The idea of regeneration also appears in the context of the \Name{} when $s$ is odd. In essence, the regeneration is now due to two reasons combined:
\begin{itemize}
	\item[\textbf{(1)}] The walk is capable of building long enough paths regardless the  current tree structure;
	\item[\textbf{(2)}] Once the walk is at a tip of a path, it takes very long time to backtracking.  
\end{itemize}
{The two  items above assure that the walker may ``forget'' the tree structure built up to a certain time  and start afresh. }
Item (2) is related to the hitting times estimates proved in  Section~\ref{sec:longtimes}. 
Thus, item (2) holds in the presence of condition (\ref{cond:UE}) regardless the parity of~$s$.
However, as will be clearer in the sequel, item (1) requires $s$ to be odd. With $s$ odd, the walk may ``push the tree forward" adding new leaves to the bottom of the tree. This feature gives the walk the ability of creating the escape routes it needs. {This combined with (2) assures that the walker has a positive probability of never returning to some portions of the tree.}

The general criterion for ballisticity  introduced in this section is, in essence, a quantitative version of (1) and (2).

\medskip 
For $r$ a positive integer, let us denote by $\Omega_r$ the subset of $\Omega$ formed by all pairs $(T,x)$ such that $T$ has height at least $r$ and $\mathrm{dist}_T(x, \text{\Root}) \ge r$. Let $\eta_r$ be the first time $X$ hits the ancestor of its initial position at distance $r$ in the path connecting $X_0$ to the root of $T_0$,  i.e.,
\begin{equation*}
\eta_r := \inf \left \lbrace n \ge 0 \; \middle | \; \mathrm{dist}_{T_n} (X_n, \Root) = \mathrm{dist}_{T_0} (X_0, \Root) - r\right \rbrace\;.
\end{equation*}

We say that the process $\{(T_n,X_n)\}_{n \in \N}$ satisfies conditions (R) and (L) if: there exists~$\alpha \in (0,1)$, $\varepsilon\in\left(0,\frac{1}{2}\right)$ and $r$ such that 
\begin{equation}\tag{R}\label{eq:R}
\inf_{(T_0,x_0) \in \Omega} \Pd_{T_0,x_0,s,\boldsymbol{\xi}} \left( \exists m \le \exp\{r^{\alpha}\},\mathrm{dist}_{T_m} (X_m, \text{root}) \ge 2r \right) \geq 1 - \varepsilon/2\;,
\end{equation}
and
\begin{equation}\tag{L}\label{eq:L}
\sup_{(T_0,x_0) \in \Omega_r} \Pd_{T_0,x_0,s,\boldsymbol{\xi}} \left(\eta_r \le \exp\{r^{\alpha} \}\right) \leq  \frac{1}{2}-\varepsilon\;.
\end{equation}
Condition \eqref{eq:R} guarantees that in at most $\exp\{r^{\alpha}\}$ steps the walker will be at distance at least $2r$ from the root, with sufficiently high probability. To see why \eqref{eq:R} is related to (1), just consider a process started at the bottom of $T_0$. Then, the only way of increasing the distance from the root by $2r$ is actually building a path this long. 

Condition \eqref{eq:L} instead, is related to return times and assures that, regardless of the initial condition, the walker needs at least a stretched exponential time $\exp\{r^{\alpha}\}$  to climb a path of length $r$. As we shall see, a Markov chain $\{(T_n, X_n)\}_{n\in \N}$  satisfying both conditions, seen in a window of time of order at most $\exp\{r^{\alpha}\}$, is more likely to see the process $X$  increasing its distance from the root by $r$ units (\eqref{eq:R} for jump to the {\em right}) than decreasing it by the same amount (\eqref{eq:L} for jump to the {\em left}).

{In order to show ballisticity it will be convenient to observe the walker $X$ at certain stopping times. First fix a positive integer $r$. Then define
\begin{equation*}
\begin{split}
\sigma_0 & \equiv  0\;, \\
\sigma_1 &:= \exp\{r^{\alpha}\} \wedge \inf \big\lbrace n > 0 : \left(\mathrm{dist}_{T_{n}}(X_{n}, \text{root}) - \mathrm{dist}_{T_0}(X_0, \text{root})\right) \in \{-r,r\} \big \rbrace\;.
\end{split}
\end{equation*}
and by induction
\begin{equation*}
\begin{split}
\sigma_k &:= \sigma_{k-1} + \sigma_1 \circ \theta_{\sigma_{k-1}} ((X_n)_{n\ge 0})\;,
\end{split}
\end{equation*}
for $k\ge 2$, where $\theta_k$ is the forward time shift by $k$ units, i.e. $\theta_{k} ((X_n)_{n\ge 0}) = (X_{n+k})_{n\ge 0}$.
Clearly, these stopping times depend on $r$, but we omit such a dependency to avoid clutter. 
From the definition it follows that, for all $k$, $\sigma^{}_k$ is bounded from above by $k\exp\{r^{\alpha}\}$. We also set $\Delta d_{k+1}= \mathrm{dist}_{T_{\sigma_{k+1}}}(X_{\sigma_{k+1}}, \Root) - \mathrm{dist}_{T_{\sigma_k}}(X_{\sigma_k},\Root)$ and denote by $\tilde{\mathcal{F}}_k$ the $\sigma$-field  generated by the
process $\{(T_n,X_n)\}_{n \in \N}$ up to time $\sigma_k$. }
%
\begin{lemma}\label{lema:couplingsk}
Let $\{(T_n,X_n)\}_{n \in \N}$ be a process satisfying conditions \eqref{eq:R} and \eqref{eq:L}, $T_0$ a rooted locally finite tree, $x_0$ a vertex of $T_0$.  There exists $\varepsilon>0$ and $r$ such that for all $k$
	\begin{equation}\label{eq:Biden}
	\inf_{T_0,x_0} \Pd_{T_0,x_0} \left( \Delta d_{k+1} = r \middle | \tilde{\mathcal{F}}_k\right) \geq \frac{1}{2}(1+\varepsilon)\;. 
	\end{equation}
Thus, if  $\left \lbrace S_k \right \rbrace_{k \ge 0}$ denotes  a $\frac{1}{2}(1+\varepsilon)$-right biased simple random walk on $\Z$,  the process $\{ \mathrm{dist}_{T_{\sigma_k}}(X_{\sigma_k}, \Root)/r\}_{k \ge 0}$ and $\left \lbrace S_k \right \rbrace_{k \ge 0}$ starting from $\lfloor \mathrm{dist}_{T_{0}}(x_0, \Root)/r \rfloor$ can be coupled in such way that
	\[
	\Pd \left( \mathrm{dist}_{T_{\sigma_k}}(X_{\sigma_k}, \Root) \ge rS_k, \forall k \right) = 1\;.
	\]
\end{lemma}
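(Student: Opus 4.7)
The plan is to combine the strong Markov property with conditions \eqref{eq:R} and \eqref{eq:L} applied to the process restarted at $\sigma_k$, and then to construct the coupling by a routine randomization. Conditionally on $\tilde{\mathcal{F}}_k$, the process $\{(T_{\sigma_k + n}, X_{\sigma_k + n})\}_{n \geq 0}$ is again a $(s,\bxi)$-\Name{} with initial data $(T_{\sigma_k}, X_{\sigma_k})$ and shifted environment $\theta_{\sigma_k}(\bxi)$. Let $d_0 := \mathrm{dist}_{T_{\sigma_k}}(X_{\sigma_k}, \Root)$ and, on the post-$\sigma_k$ process, let $\eta_r^{\pm}$ be the first times the distance from the root hits $d_0 \pm r$. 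Then
\begin{equation*}
\{\Delta d_{k+1} = r\} = \{\eta_r^+ < \eta_r^-\} \cap \{\eta_r^+ \leq \exp\{r^{\alpha}\}\},
\end{equation*}
and the task is to show that this event has conditional probability at least $(1+\varepsilon)/2$.

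I would split on $d_0 < r$ versus $d_0 \geq r$. When $d_0 < r$, the event $\{\eta_r^- < \infty\}$ is empty (distances cannot be negative), so $\sigma_{k+1} - \sigma_k = \eta_r^+ \wedge \exp\{r^{\alpha}\}$. Moreover, $d_0 + r \leq 2r$ and the distance-to-root moves by at most one per step, so the event in \eqref{eq:R} forces $\eta_r^+ \leq \exp\{r^{\alpha}\}$; hence $\Pd(\Delta d_{k+1} = r \mid \tilde{\mathcal{F}}_k) \geq 1 - \varepsilon/2 \geq (1+\varepsilon)/2$, using $\varepsilon < 1/2$. When $d_0 \geq r$, the pair $(T_{\sigma_k}, X_{\sigma_k})$ lies in $\Omega_r$ (its tree has height at least $d_0 \geq r$), so \eqref{eq:L} yields $\Pd(\eta_r^- \leq \exp\{r^{\alpha}\} \mid \tilde{\mathcal{F}}_k) \leq 1/2 - \varepsilon$. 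A parallel application of \eqref{eq:R} --- read as giving forward progress $\Pd(\eta_r^+ \leq \exp\{r^{\alpha}\} \mid \tilde{\mathcal{F}}_k) \geq 1 - \varepsilon/2$ --- combines with the previous bound to yield
\begin{equation*}
\Pd(\Delta d_{k+1} = r \mid \tilde{\mathcal{F}}_k) \geq \Pd(\eta_r^+ \leq \exp\{r^{\alpha}\} \mid \tilde{\mathcal{F}}_k) - \Pd(\eta_r^- \leq \exp\{r^{\alpha}\} \mid \tilde{\mathcal{F}}_k) \geq \tfrac{1}{2} + \tfrac{3\varepsilon}{2}.
\end{equation*}

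Granted \eqref{eq:Biden}, the coupling is standard. On an enlarged probability space with an auxiliary iid Uniform$[0,1]$ sequence $\{U_k\}_{k \geq 1}$, set $S_0 := \lfloor \mathrm{dist}_{T_0}(x_0, \Root)/r\rfloor$ and inductively define $S_{k+1} := S_k + 1$ on $\{\Delta d_{k+1} = r\} \cap \{U_{k+1} \leq (1+\varepsilon)/(2p_{k+1})\}$, where $p_{k+1} := \Pd(\Delta d_{k+1} = r \mid \tilde{\mathcal{F}}_k) \geq (1+\varepsilon)/2$, and $S_{k+1} := S_k - 1$ otherwise. By construction $\{S_k\}$ is a $\tfrac{1+\varepsilon}{2}$-biased simple random walk, and every upward step of $S$ is accompanied by $\Delta d_{k+1} = r$, while every downward step corresponds to $\Delta d_{k+1} \geq -r$; a one-line induction then yields $\mathrm{dist}_{T_{\sigma_k}}(X_{\sigma_k}, \Root) \geq rS_k$ for every $k$. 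The main obstacle is the subcase $d_0 > r$ in the case analysis above: the literal reading of \eqref{eq:R} becomes trivial once $d_0 \geq 2r$, so extracting the uniform lower bound $\Pd(\eta_r^+ \leq \exp\{r^{\alpha}\}) \geq 1 - \varepsilon/2$ requires either re-reading \eqref{eq:R} as progress from the walker's initial position $x_0$ (rather than $\Root$) or supplementing it with the long-hitting-time estimates of Section~\ref{sec:longtimes} together with the symmetry of the TBRW to transfer the short-range behavior to arbitrary starting configurations.
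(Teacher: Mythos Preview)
Your coupling construction and the case $d_0<r$ are correct and match the paper. The gap you flag in the case $d_0\ge r$ is real, and neither of your proposed fixes closes it. Re-rooting at $x_0=X_{\sigma_k}$ and applying \eqref{eq:R} only yields that the walker reaches distance $2r$ from $x_0$ within $e^{r^\alpha}$ steps; in a tree this can happen while the walker's level stays strictly inside $(d_0-r,d_0+r)$ (go up to an ancestor at level $d_0-j$ with $r/2<j<r$, then down into a sibling subtree to level $d_0+2r-2j<d_0+r$), so it does not force $\eta_r^+\le e^{r^\alpha}$. The hitting-time bounds of Section~\ref{sec:longtimes} control $\eta_r^-$, not $\eta_r^+$, so they do not help either.

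The paper avoids lower-bounding $\Pd(\eta_r^+\le e^{r^\alpha})$ altogether. One partitions into the three events
\[
(a)\ \eta_r^+<\eta_r^-\wedge e^{r^\alpha},\qquad
(b)\ \eta_r^-<\eta_r^+\wedge e^{r^\alpha},\qquad
(c)\ \eta_r^+\wedge\eta_r^->e^{r^\alpha}.
\]
Condition \eqref{eq:L} gives $\Pd((b)\mid\tilde{\mathcal F}_k)\le\tfrac12-\varepsilon$. For $(c)$, note that on this event the walker never reaches level $d_0-r$, hence never visits the ancestor $z$ of $X_{\sigma_k}$ at distance $r$; it therefore remains in the subtree below $z$, where distance from $z$ equals level minus $(d_0-r)$ and thus stays $<2r$. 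Now apply \eqref{eq:R} to the pair $(T_{\sigma_k},X_{\sigma_k})$ with \emph{root taken to be $z$} --- this is legitimate because the infimum in \eqref{eq:R} runs over all rooted pairs --- to get $\Pd((c)\mid\tilde{\mathcal F}_k)\le\varepsilon/2$. Hence
\[
\Pd(\Delta d_{k+1}=r\mid\tilde{\mathcal F}_k)=\Pd((a)\mid\tilde{\mathcal F}_k)\ge 1-(\tfrac12-\varepsilon)-\tfrac{\varepsilon}{2}=\tfrac12+\tfrac{\varepsilon}{2}.
\]
The key idea you are missing is to bound the \emph{stagnation} event $(c)$ directly via re-rooting at $z$, rather than trying to extract a forward-progress bound on $\eta_r^+$.
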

Once we have at our disposal \eqref{eq:L} and \eqref{eq:R}, the proof of the above lemma is in line with  Lemma 5.1 in~\cite{FIORR}. We give here just a sketch of the proof, which we hope may convince the reader familiar with the technical details behind the idea.
\begin{proof}[Proof sketch of Lemma \ref{lema:couplingsk}] 
%
To see why the definition of the {stopping times} imply Equation~\eqref{eq:Biden}, {note that $\sigma_{k+1}$ given $\sigma_k$ can be described through three complementary events:  
\begin{equation*}
\begin{split}
(a) & \ \sigma_{k+1} = \inf\{n > \sigma_{k} : \mathrm{dist}_{T_{n}}(X_{n}, \text{root}) - \mathrm{dist}_{T_{\sigma_k}}(X_{\sigma_k}, \text{root}) = r \} \le \exp\{r^{\alpha}\}; \\
(b) & \ \sigma_{k+1} = \inf\{n > \sigma_{k} : \mathrm{dist}_{T_{n}}(X_{n}, \text{root}) - \mathrm{dist}_{T_{\sigma_k}}(X_{\sigma_k}, \text{root}) = -r \} \le \exp\{r^{\alpha}\}; \\ 
(c) & \ |\sigma_{k+1} - \sigma_{k}| = \exp\{r^{\alpha}\} \textrm{ and } |\mathrm{dist}_{T_{n}}(X_{n}, \text{root}) - \mathrm{dist}_{T_{\sigma_k}}(X_{\sigma_k}, \text{root})| < r, \ \forall \sigma_{k}< n \le \sigma_{k+1}.\\
\end{split}
\end{equation*}
 Furthermore,  if (b) has occurred, then $X$ has climbed up $r$ levels of its tree spending less than $\exp\{r^{\alpha}\}$ steps. However, by \eqref{eq:L}  this happens with probability at most $1/2 -\varepsilon$. If instead (c) has occurred,  then X has walked for $\exp\{r^{\alpha}\}$ steps and has neither visited the ancestor $z$ of $x_0$ at distance $r$ nor has increased its distance from $z$ by $r$. It is possible to show that the probability of this event is the same of  observing a process 
$X$ on the subtree hung from $z$ (thus rooted at $z$) that in $\exp\{r^{\alpha}\}$ steps does not get at distance $2r$ from the root $z$. By condition \eqref{eq:R}  this probability is at most $\varepsilon/2$, proving the Lemma.}

Relying on Equation~\eqref{eq:Biden}, the coupling with a right biased random $\{S_k\}_{k \in \N}$ on~$\Z$ can be constructed as follows: 
	\begin{itemize}
		\item whenever 
		\[
		\mathrm{dist}_{T_{\sigma_{k+1}}}(X_{\sigma_{k+1}}, \Root) - \mathrm{dist}_{T_{\sigma_k}}(X_{\sigma_k}, \Root) < r\;,
		\]
		we let $S_{k+1}$ move to the left;
		
		\item otherwise, we decide according to another source of randomness independent of $\{(T_n,X_n)\}_{n \in \N}$, whether $S_{k+1}$ follow the process $\{\mathrm{dist}_{T_{\sigma_{k+1}}}(X_{\sigma_{k+1}}, \Root)/r\}_{k \in \N}$ or moves to the left.
	\end{itemize}
	Specifically, if the walker $X$ has not increased its distance by $r$ units taking less than $\exp\{r^{\alpha}\}$ steps, $S$ moves to the left. On the other hand, if $X$ has successfully increased its distance by $r$ units in the right amount of time, $S$ decides according to some coin whether it jumps to the left or to the right. The extra source of randomness is needed in order to make the increments of $\{S_k\}_{k \in \N}$ independent, although they depend on~$\{(T_n,X_n)\}_{n \in \N}$.
\end{proof}

As a consequence of the above coupling, we can now easily prove that a \Name{} satisfying \eqref{eq:L}  and \eqref{eq:R} is ballistic. 
\begin{proposition}[General criterion for Ballisticity]\label{prop:liminf} Let $\{(T_n,X_n)\}_{n \in \N}$ be a process satisfying conditions \eqref{eq:L}  and \eqref{eq:R}, then there exists a positive constant $c$, such that
	\[
	\liminf_{n \rightarrow \infty}\frac{\mathrm{dist}_{T_n}(X_n,\Root)}{n}  \geq c, \; \Pd_{T_0,x_0, s, \boldsymbol{\xi}}\text{- almost surely}\;,
	\]
	for all initial conditions $(T_0,x_0)$.
\end{proposition}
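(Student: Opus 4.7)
The plan is to combine the coupling from Lemma~\ref{lema:couplingsk} with the Strong Law of Large Numbers and a careful conversion between the clock $k$ (counting $\sigma_k$'s) and the original clock $n$.

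First, I would invoke Lemma~\ref{lema:couplingsk} to fix $r$ and $\varepsilon > 0$ and construct on a common probability space a $\tfrac{1}{2}(1+\varepsilon)$-biased simple random walk $\{S_k\}_{k \ge 0}$ on $\Z$, started from $\lfloor \mathrm{dist}_{T_0}(x_0,\Root)/r \rfloor$, such that
\[
\Pd\bigl( \mathrm{dist}_{T_{\sigma_k}}(X_{\sigma_k},\Root) \geq r\, S_k \ \text{for all } k \geq 0\bigr) = 1.
\]
The classical SLLN gives $S_k/k \to \varepsilon$ almost surely, so in particular $\mathrm{dist}_{T_{\sigma_k}}(X_{\sigma_k},\Root)/k \to r\varepsilon$ almost surely (along the subsequence $\sigma_k$).

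Next, I would transfer this lower bound from the times $\sigma_k$ to every $n$. For $n \geq 0$ set $k(n) := \max\{k : \sigma_k \leq n\}$. By the very definition of the stopping times, between $\sigma_{k(n)}$ and $\sigma_{k(n)+1}$ the distance to the root changes by strictly less than $r$ (otherwise $\sigma_{k(n)+1}$ would have been triggered earlier), so
\[
\mathrm{dist}_{T_n}(X_n,\Root) \ \geq\ \mathrm{dist}_{T_{\sigma_{k(n)}}}(X_{\sigma_{k(n)}},\Root) - r \ \geq\ r\bigl(S_{k(n)} - 1\bigr).
\]
On the other hand, since $\sigma_{k+1} - \sigma_k \leq \exp\{r^\alpha\}$ by construction, we have $\sigma_k \leq k \exp\{r^\alpha\}$, and hence
\[
n \ <\ \sigma_{k(n)+1} \ \leq\ (k(n)+1)\exp\{r^\alpha\},
\qquad \text{i.e.} \qquad k(n) \ \geq\ n\exp\{-r^\alpha\} - 1.
\]
In particular $k(n) \to \infty$ as $n \to \infty$, so $S_{k(n)}/k(n) \to \varepsilon$ almost surely.

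Combining these two displays, almost surely
\[
\liminf_{n\to\infty} \frac{\mathrm{dist}_{T_n}(X_n,\Root)}{n}
\ \geq\ \liminf_{n\to\infty} \frac{r\bigl(S_{k(n)} - 1\bigr)}{n}
\ \geq\ \liminf_{n\to\infty} \frac{r\,S_{k(n)}}{k(n)} \cdot \frac{k(n)}{n}
\ \geq\ r\varepsilon \exp\{-r^\alpha\} \ =:\ c > 0,
\]
proving the claimed ballisticity. I do not expect a serious obstacle here: the coupling and the SLLN do the real work, and the only delicate point is the inequality $\mathrm{dist}_{T_n}(X_n,\Root) \geq r(S_{k(n)} - 1)$ between stopping times, which uses the ``$\wedge \exp\{r^\alpha\}$'' truncation built into the definition of the $\sigma_k$'s to guarantee the deviation of the distance from its value at $\sigma_{k(n)}$ is strictly less than $r$.
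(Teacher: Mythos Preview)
Your proposal is correct and follows essentially the same approach as the paper: invoke the coupling of Lemma~\ref{lema:couplingsk}, apply the SLLN to the biased walk $\{S_k\}$, and then pass from the subsequence $(\sigma_k)$ to all $n$ using the two deterministic bounds $|\sigma_{k+1}-\sigma_k|\le e^{r^\alpha}$ and $|\mathrm{dist}_{T_{\sigma_{k+1}}}(X_{\sigma_{k+1}},\Root)-\mathrm{dist}_{T_{\sigma_k}}(X_{\sigma_k},\Root)|\le r$. The paper merely sketches this last step and refers to \cite{FIORR}; you have written out the details explicitly, with only a minor slip (the coupling gives $\liminf_k \mathrm{dist}_{T_{\sigma_k}}(X_{\sigma_k},\Root)/k \ge r\varepsilon$, not convergence to $r\varepsilon$), which does not affect the conclusion.
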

\begin{proof} By Strong Law of Large Numbers, given a $\frac{1}{2}(1+\varepsilon)$-biased random walk $\{S_k\}_{k\in \N}$ and Lemma~\ref{lema:couplingsk} we already have that for any initial condition $(T_0,x_0)$, 
	\begin{equation*}
	\liminf_{k \rightarrow \infty} \frac{\mathrm{dist}_{T_{\sigma_k}}(X_{\sigma_k}, \Root) }{k} \ge  \varepsilon \,r\;, \; \Pd_{T_0,x_0,s,\boldsymbol{\xi}}\text{- almost surely}\;.
	\end{equation*}
To pass from the subsequence to the whole sequence is a standard argument. {The key point is to observe that by definition of the stopping times, it follows that}
\begin{align*}
|\mathrm{dist}_{T_{\sigma_{k+1}}}(X_{\sigma_{k+1}}, \Root) - \mathrm{dist}_{T_{\sigma_k}}(X_{\sigma_k}, \Root)| \leq  r
\quad 
\text{ and } 
\quad
| \sigma_{k+1}- \sigma_{k}| \le \exp\{r^{\alpha}\}\;,
\end{align*}
hold almost surely, for every $k$. 
The reader may check the details in  Proposition 5.3 in \cite{FIORR}.
\end{proof}

\subsection{Proof of Theorem \ref{theo:ballistic}
}

In light of Proposition~\ref{prop:liminf}, in order to prove Theorem~\ref{theo:ballistic}, it is enough to show that the \Name{} process with $s$ odd and $\boldsymbol{\xi}$ is an independent environment satisfying condition (\ref{cond:UE}) fulfills conditions \eqref{eq:L} and \eqref{eq:R}. 
%

\medskip 

We begin recalling Corollary \ref{cor:etaybound}, which states that for $s$ odd and under condition~(\ref{cond:UE}) there exists a positive $C$ depending on $s$ and $\boldsymbol{\xi}$ only such that
	\[
\Pd_{T_0,x_0,s, \boldsymbol{\xi}} \left( \eta_{z} \le e^{\sqrt{\ell}}\right) \le \frac{C}{\sqrt{\ell}}\;.
\]
By setting $\ell=r^{2\alpha}$ with $\alpha \in (0,1)$ and choosing $r$ sufficiently large the above upper bound implies condition \eqref{eq:L}, since the above bound is uniform for $(T_0,x_0) \in \Omega_r$.

\medskip 
Thus, in order to prove Theorem \ref{theo:ballistic} we are left to prove that for $s$ odd and under (UE) the $\Name{}$ satisfies \eqref{eq:R}. However,  instead of showing it directly, we will actually show that an auxiliary condition (R)$_M$ is satisfied, which implies \eqref{eq:R}  for some values of $M$. 

Let us define the condition (R)$_M$: we say the \Name{} satisfies (R)$_M$ if there exists $n_0 = n_0 (s, M, \bxi) \in  \mathbb{N}$, depending only on $s, M$ and $\bxi$ such that, for all $n \geq  n_0$, all finite trees $T_0$ and all $x_0, y \in T_0$,
\begin{equation}\tag{R$_M$}
\inf_{(T_0,x_0) \in \Omega} \mathbb{P}_{T_0, x_0, s, \bxi}\left(\exists m \leq n : \dist{m}{X_{m}}{y}\geq \log^M n  \right)\geq 1-e^{-n^{1/4}}.
\end{equation}
Note that (R)$_M$, for $M>1$, implies \eqref{eq:R}: let $n= \exp\{r^{\alpha}\}$, choose $\alpha$ such that  $\alpha M >1$, and choose a large enough $r$. Thus, to  prove Theorem~\ref{theo:ballistic}, we are left with showing that \Name{} process satisfies condition 
(R)$_M$ for some $M>1$. 
For the sake of clarity and organization, we will divide the latter proof into  subsections, each one  corresponding to a step of the proof.
\subsubsection{General idea of the proof} The proof that \Name{} satisfies condition \eqref{eq:R} is similar to the proof for the BGRW (which is the \Name{} for $s=1$ and $\boldsymbol{\xi}$  an i.i.d. sequence of Bernoulli's random variables) treated in \cite{FIORR}. For this reason, we will trace a parallel between the latter  and the general case investigated here, pointing out and proving the main  modification needed in order to extend the proof to  any $s$ odd and general environment process $\boldsymbol{\xi}$ satisfying (UE).

The general idea is to bootstrap (R)$_M$, i.e., we show that the condition is satisfied for small values of $M$ and then use it to show that (R)$_{M+1/2}$ is satisfied as well. Once we have proven (R)$_M$, we combine it with \eqref{eq:L}, which says that $X$ is unlikely to decrease its distance from the root by a certain amount. In essence we show that the process is likely to behave as follows: if (R)$_M$ holds, in $n$ steps we are likely to see $X$ at distance $\log^M n$ away from the root; by \eqref{eq:L}  it is unlikely that in $n$ steps the walker backtrack half of this distance. Thus, instead of backtracking half the distance, the walker increases its distance by another $\log^M n$ and this  argument allows us to pass from (R)$_M$ to (R)$_{M+1/2}$.
\subsubsection{Small distance: Proving (R)$_{1/2}$} In the particular case of BGRW, at each step the walker has probability at least $p/2$ of increasing its distance by one: if it is on a leaf, it adds a new leaf with probability $p$ (since $s=1$ it has a chance of adding a new leaf at each step) and then jumps to it with probability $1/2$, and this is the worst scenario. Thus, if $M$ is small, in $n$ step we are likely to see the walker taking $\log^M n$ steps down in a row. For general $s$ odd we do not have this feature, so we overcome this by looking the process only at times multiple of $s$. The following lemma formalizes this argument.

\begin{lemma}\label{lemma:Mhalf} Consider a $\Name{}$ with $s$ odd and independent environment process satisfying condition \eqref{cond:UE}. Then, it satisfies $(R)_{1/2}$.
\end{lemma}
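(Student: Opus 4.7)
The plan is to view the process only at times that are multiples of $s$, and to show that in each $s$-step block starting with the walker on a leaf, there is a uniformly positive probability of an ``escape'' that pushes the walker one step further from $y$ and leaves it again on a newly-created leaf. Concretely, for a block $[ms,(m+1)s]$, let the escape event $E_m$ require that $\xi_{ms}\geq 1$ and that the walker, throughout the $s$ steps of the block, oscillates $X_{ms}\to w\to X_{ms}\to w\to\cdots\to w$ where $w$ is one of the new leaves created at time $ms$. Since $s$ is odd, the walker ends the block at $w$, giving $\dist{(m+1)s}{w}{y}=\dist{ms}{X_{ms}}{y}+1$. Writing $d$ for the degree of $X_{ms}$ in $T_{ms}$, and using (UE) together with the fact that each ``$X_{ms}\to$ new leaf'' transition (there are $(s+1)/2$ of them) has conditional probability at least $\xi_{ms}/(d+\xi_{ms})$ while the $(s-1)/2$ returns from a leaf are forced, one obtains
\[
\mathbb{P}(E_m\mid \mathcal{F}_{ms})\geq \kappa\left(\frac{\xi_{ms}}{d+\xi_{ms}}\right)^{(s+1)/2}.
\]
When $X_{ms}$ is itself a leaf ($d=1$), this simplifies to the uniform bound $p:=\kappa(1/2)^{(s+1)/2}>0$.

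Set $K:=\lceil(\log n)^{1/2}\rceil$ and $N:=\lfloor n/(s(K+1))\rfloor$, and partition the first $sN(K+1)$ steps into $N$ disjoint epochs of $K+1$ consecutive blocks. Since $E_m$ leaves the walker on a fresh leaf, $K$ consecutive escapes starting from a leaf \emph{stack} to produce a brand-new path of length $K$ along which the distance from $y$ strictly grows by $K$. Let $G_i$ be the event that all $K+1$ blocks within epoch $i$ are escapes; on $G_i$, the walker at time $(i+1)(K+1)s$ sits at distance at least $K+1\geq (\log n)^{1/2}$ from $y$. Assuming the uniform lower bound $\mathbb{P}(G_i\mid \mathcal{F}_{i(K+1)s})\geq p^{K+1}$, the Markov property across epochs together with Bernoulli's inequality gives
\[
\mathbb{P}\Bigl(\bigcap_{i=0}^{N-1}G_i^c\Bigr)\leq (1-p^{K+1})^N\leq \exp\bigl(-p^{K+1}N\bigr).
\]
Since $p^{K+1}=\exp(-c(\log n)^{1/2})=n^{-o(1)}$ and $N\geq n^{1-o(1)}$, the right-hand side is at most $\exp(-n^{1/4})$ for every $n\geq n_0(s,\boldsymbol{\xi})$, which is precisely the conclusion of $(R)_{1/2}$.

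The hard part is the uniform lower bound on $\mathbb{P}(G_i\mid \mathcal{F}_{i(K+1)s})$ in the presence of a possibly unbounded degree of $X_{i(K+1)s}$ at the start of the epoch: for all but the first block of each epoch the bound $p$ per block is automatic once the walker is on a fresh leaf, so the obstacle reduces to the first block. I would handle this by using the additional block within each epoch as a ``warm-up'' whose only role is to bring the walker onto a leaf; the key observation is that once one conditions on $\xi_{ms}\geq 1$ (probability $\geq \kappa$) and on choosing any of the new leaves at the first step (probability $\xi_{ms}/(d+\xi_{ms})$), parity together with the forced return from a new leaf implies that the block-endpoint is always a leaf. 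A careful bookkeeping shows that this warm-up factor combines with the $p^K$ escape factor to yield a lower bound of the form $p^{K+1}$, uniform in the initial configuration, provided $n$ is taken large enough that the loss in the first block is absorbed by the $n^{1-o(1)}$ epochs at our disposal.
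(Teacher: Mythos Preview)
Your escape event $E_m$ and the block-stacking argument are fine once the walker sits on a fresh leaf, but the warm-up you sketch for the first block of each epoch does not work, and this is a genuine gap.

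First, your ``key observation'' for the warm-up is incorrect: conditioning on $\xi_{ms}\ge 1$ and on the first step landing on a new leaf $w$ gives you step $2$ back at $X_{ms}$ for free, but from step $3$ onward the walker is at $X_{ms}$ and may move to \emph{any} neighbour, including the one toward $y$. Nothing forces the block endpoint to be a leaf.

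Second, and more seriously, the probability you assign to the warm-up is $\kappa\cdot\xi_{ms}/(d+\xi_{ms})$, which is of order $1/d$. Condition $(R)_{1/2}$ demands a threshold $n_0$ depending only on $s$ and the law of $\boldsymbol{\xi}$, uniform over \emph{all} finite initial trees $T_0$ and all $x_0\in T_0$. Since $T_0$ may place $x_0$ at a vertex of degree $10^{100}$, every epoch pays a factor $\sim 10^{-100}$ in its first block; this cannot be ``absorbed by the $n^{1-o(1)}$ epochs at our disposal'' because the loss does not depend on $n$ at all.

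The fix, which is what the paper does, is to stop insisting that escapes go to \emph{new} leaves. Replace $E_m$ by the weaker indicator
\[
I_k:=\mathbb{1}\bigl\{\dist{sk}{X_{sk}}{y}\ge \dist{s(k-1)}{X_{s(k-1)}}{y}+1\bigr\}.
\]
At any vertex of degree $d\ge 2$ exactly one neighbour lies toward $y$ and $d-1$ lie away, so the walker steps ``down'' with probability $(d-1)/d\ge 1/2$, \emph{uniformly in $d$}. One then checks that the event ``at each of the $s$ steps, go down if currently at a non-leaf, bounce if at a leaf'' forces $I_k=1$ (the distance from $y$ never drops below its starting value and, since $s$ is odd, has changed parity at time $sk$). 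This event has probability at least $\kappa\,2^{-s}$ uniformly over all configurations, the single factor $\kappa$ coming from \eqref{cond:UE} in the case where $X_{s(k-1)}$ is a leaf. With this uniform bound in hand, your Lemma~\ref{lem:stochdom}-style estimate with $k=\lceil\log^{1/2}n\rceil$, $m=\lfloor n/s\rfloor$, $\mu=\kappa\,2^{-s}$ finishes the proof, and no warm-up is needed.
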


The proof of the above lemma relies on the following technical result.
\begin{lemma}[Lemma 3.6 of \cite{FIORR}]\label{lem:stochdom} Suppose $(I_j)_{j\in\N\backslash\{0\}}$ are indicator random variables. Assume $\mu$ is such that $\Pd\left(I_1=1\right)\geq \mu$ and
	\[\forall j>1\,:\,\Pd\left({I_j=1\mid I_1,\dots,I_{j-1}}\right)\geq \mu\;.\]
	Then for any $k,m\in\N\backslash\{0\}$ 
	\[\Pd\left(\mbox{at least $k$ consecutive $1$'s in the sequence $(I_j)_{j=1}^m$}\right)\geq 1 - (1-\mu^k)^{\lfloor m/k\rfloor}\;.
	\]
\end{lemma}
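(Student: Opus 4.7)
The plan is to partition the index set $\{1, 2, \ldots, m\}$ into $N := \lfloor m/k\rfloor$ disjoint consecutive blocks of length $k$, discarding the (at most $k-1$) leftover indices, and then to lower bound the probability that at least one of these blocks consists entirely of $1$'s. Since every block of $k$ consecutive $1$'s produces $k$ consecutive $1$'s in the full sequence, such a lower bound transfers directly to the event appearing in the statement.

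Concretely, for the $i$-th block $B_i := \{(i-1)k+1, \ldots, ik\}$, let $\mathcal{G}_i$ denote the $\sigma$-field generated by $I_1,\ldots,I_{(i-1)k}$, and let $A_i$ be the event that at least one indicator in $B_i$ equals $0$. By the chain rule applied inside the block,
\begin{equation*}
\Pd\left( A_i^c \,\middle|\, \mathcal{G}_i \right) \;=\; \prod_{j\in B_i} \Pd\left( I_j = 1 \,\middle|\, I_1,\ldots,I_{j-1} \right),
\end{equation*}
and since each factor is at least $\mu$ by hypothesis, we get $\Pd(A_i^c\mid\mathcal{G}_i)\geq \mu^k$, equivalently $\Pd(A_i\mid\mathcal{G}_i)\leq 1-\mu^k$, almost surely.

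Iterating this across the $N$ blocks via the tower property yields
\begin{equation*}
\Pd\left( \bigcap_{i=1}^{N} A_i \right) \;=\; \Ed\!\left[ \Pd\left( A_N\,\middle|\,\mathcal{G}_N \right) \prod_{i=1}^{N-1} \mathbb{1}_{A_i} \right] \;\leq\; (1-\mu^k)\,\Pd\left( \bigcap_{i=1}^{N-1} A_i \right),
\end{equation*}
and a straightforward induction on $N$ gives $\Pd\left(\bigcap_{i=1}^{N} A_i\right)\leq (1-\mu^k)^{N}$. Taking complements and noting that $\bigcup_{i=1}^{N} A_i^c$ is contained in the event $\{$at least $k$ consecutive $1$'s in $(I_j)_{j=1}^m\}$ finishes the proof.

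There is no real obstacle here; the only point requiring some care is to apply the hypothesis $\Pd(I_j = 1 \mid I_1,\ldots,I_{j-1})\geq \mu$ inside the appropriate conditional expectation, almost surely, rather than as an unconditional inequality, so that the product expansion above is legitimate. All the other steps are routine manipulations with the tower property.
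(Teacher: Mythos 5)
Your proof is correct. Note that the paper itself offers no proof of this lemma: it is imported verbatim as Lemma 3.6 of \cite{FIORR}, so there is nothing in this paper to compare against; your argument supplies the standard disjoint-block proof that the cited reference uses. The structure is sound: partition into $N=\lfloor m/k\rfloor$ blocks, bound $\Pd(A_i^c\mid\mathcal{G}_i)\ge\mu^k$ within each block, and iterate with the tower property using that $\bigcap_{i=1}^{N-1}A_i\in\mathcal{G}_N$. One small imprecision: the displayed identity $\Pd(A_i^c\mid\mathcal{G}_i)=\prod_{j\in B_i}\Pd(I_j=1\mid I_1,\dots,I_{j-1})$ is not literally an equality of random variables, since the right-hand side is measurable with respect to $\sigma(I_1,\dots,I_{ik-1})$ rather than $\mathcal{G}_i$. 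The clean way to get the bound you actually need is to peel off the factors backwards: condition on $\mathcal{F}_{ik-1}=\sigma(I_1,\dots,I_{ik-1})$ to extract $\Pd(I_{ik}=1\mid\mathcal{F}_{ik-1})\ge\mu$, then on $\mathcal{F}_{ik-2}$, and so on $k$ times, which yields $\Pd(A_i^c\mid\mathcal{G}_i)\ge\mu^k$ almost surely. You flag exactly this point yourself at the end, and with that reading the argument is complete.
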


\begin{proof}[Proof of Lemma~\ref{lemma:Mhalf}] Define, for each  $1\leq k\leq \lfloor n/s \rfloor$, 
	\[
	I_{k}:=\mathbb{1}\{\dist{sk}{X_{sk}}{y} \geq \dist{s(k-1)}{X_{s(k-1)}}{y}+1\}\;,
	\]
and observe that by the Markov property:
\begin{equation*}
\Pd_{T_0,x_0,s, \bxi}\left(I_{k}=1\mid I_1,\dots,I_{k-1}\right)\geq \inf_{(T,x)\in \Omega}\Pd_{T,x,s, \bxi}\left(I_1=1\right)\geq \kappa \frac{1}{2}^{\left\lfloor \frac{s+1}{2}\right\rfloor}\geq \kappa 2^{-s}\;,
\end{equation*}
where the second inequality is justified by the following observation:  whenever $X$ is not on a leaf, it has probability at least $1/2$ of jumping down. Thus, for our bound we may consider the worst case possible which is $x_0$ is a leaf. Since our process satisfies condition  \eqref{cond:UE}, with probability at least $\kappa$ we add at least one leaf to $x_0$. Then, with probability at least $1/2$ we jump to one of the new leafs. Repeating this bouncing back argument on the leafs, paying at least $1/2$ to jump to a leaf and letting them push the walker back, we have that after $s$ steps $X_s$ is on a leaf of $x_0$ with probability at least $\kappa 2^{-\left\lfloor (s+1)/2\right\rfloor}$. 
Setting $k = \log^{\frac{1}{2}} n$, $m = n$ and $\mu = \kappa 2^{-s}$ in Lemma \ref{lem:stochdom}, proves the result. 
\end{proof}
\subsubsection{Small growth in distance} Now, we will show the key step to derive (R)$_{M+1/2}$ from~(R)$_{M}$. This relation relies on the following crucial lemma.
\begin{lemma}[Small growth in distance]\label{lem:key} 
	Consider a $\Name{}$ with  $s$ odd  and independent environment process satisfying condition \eqref{cond:UE}.  Also assume (R)$_M$ is satisfied for some $M\ge 1/2$. Then there exists $n_1(s, \bxi,M)\in\N$ such that, for $n\geq n_1$, the following property holds: 
	\[\Pd_{T_0,x_0, s, \bxi}\left(\exists t\leq n: \dist{t}{X_{t}}{y} = \dist{0}{x_0}{y}+1\right)\geq 1  - 2 \frac{(\log\log n)^2}{\log^M n}\;,\]
	for all finite tree $T_0$ and $x_0,y\in T_0$ with $\dist{0}{x_0}{y}\geq \log^M n$.
\end{lemma}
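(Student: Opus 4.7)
The plan is to couple the TBRW with a generalized loop process via Proposition \ref{prop:coupling} and then apply Lemma \ref{lemma:xlooptime} with a sharply tuned choice of its parameter $K$. With $T_0$ rooted at $y$, the backbone $\mathcal{B}(T_0,y,x_0)$ has length $\ell = \dist{0}{x_0}{y} \geq \log^M n$. Under the coupling, the event that the TBRW reaches distance $d_0+1$ from $y$ corresponds (up to the identification of distance-one neighbors of $x_0$ with loops at the tip in the backbone construction) to the loop-process walker traversing, at least once, a loop attached to the tip, since the only neighbors of $x_0$ at distance $d_0+1$ from $y$ are precisely those distance-one neighbors. Consequently, the failure event for our lemma is contained in the event that the loop-process walker, every time it visits the tip, picks the edge to $\ell-1$ rather than any tip-loop.

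Applying Lemma \ref{lemma:xlooptime} with $K=(\log\log n)^2$ and a fixed $\beta\in(0,1)$, together with $\ell\geq\log^M n$, the three error terms reduce to $\exp\{-c_1(\log\log n)^2\}$, at most $c_2(\log\log n)^2/\log^M n$ (using $1-(1-1/\ell)^{c_2K}\leq c_2K/\ell$), and $\exp\{-(1-\beta)(\log\log n)^2\}$, where $c_1,c_2$ are the constants of that lemma. The first and third are $o((\log\log n)^2/\log^M n)$, so choosing $n_1=n_1(s,\bxi,M)$ large enough, the total is bounded by $2(\log\log n)^2/\log^M n$; one also checks that for $n\geq n_1$ one has $e^{\beta(\log\log n)^2}\leq n$, so the lemma's control on $\eta^{\mathrm{loop}}_0$ fits entirely in the allotted time budget of $n$ steps.

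The delicate step, and the main obstacle in my view, is to make precise the implication that "the TBRW never reaches distance $d_0+1$ from $y$ within $n$ steps" is contained, up to a negligible additional error, in the event $\{\eta^{\mathrm{loop}}_0 \leq e^{\beta(\log\log n)^2}\}$. The intuition is that if the loop-process walker never traverses a tip-loop, then each visit to the tip is exited immediately via the edge to $\ell-1$, so the walker executes a symmetric random walk on $\{0,\dots,\ell\}$ (loops at non-tip vertices can only slow it down) and must hit $0$ within $O(\ell^2)=O(\log^{2M}n)\ll e^{\beta(\log\log n)^2}$ steps with probability close to $1$. Moreover, the exponential degree growth at the tip established inside the proof of Lemma \ref{lemma:xlooptime} implies that the conditional probability of never traversing a tip-loop across many visits to the tip is vanishingly small. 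Rerunning the argument of Lemma \ref{lemma:xlooptime} under the extra conditioning "no tip-loop traversal in $n$ steps" should yield the required inclusion, after which the stated lower bound follows by taking complements.
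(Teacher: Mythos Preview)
Your approach has a genuine gap, and it diverges substantially from the paper's argument.

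The central problem is your step claiming that ``if no tip-loop is traversed, the loop-walker hits $0$ within $O(\ell^2)$ steps with probability close to $1$.'' This is false for the generalized loop process: loops at \emph{non-tip} vertices also grow, by exactly the mechanism exploited in the proof of Lemma~\ref{lemma:xlooptime}. Each time the loop-walker sits at vertex $\ell-1$ (or any intermediate vertex) at a time multiple of $s$, it adds $\xi$ loops there, and so the sojourn times at intermediate vertices blow up. Conditioning on ``no tip-loop ever traversed'' does not prevent this; it only forces each tip-visit to last one step. Consequently $\eta^{\mathrm{loop}}_0$ can be far larger than any polynomial in $\ell$, and the containment $F\subseteq\{\eta^{\mathrm{loop}}_0\le e^{\beta K}\}$ you need simply does not hold. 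Your suggestion to ``rerun Lemma~\ref{lemma:xlooptime} under the extra conditioning'' does not fix this, because that lemma controls the time to reach $0$, not the number of tip-visits made within a given time budget; under your conditioning you would need a \emph{lower} bound on the number of tip-visits in $n$ steps, and the growing intermediate loops are precisely the obstruction to such a bound.

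A second, related issue: your argument never uses the hypothesis (R)$_M$, yet the paper's proof uses it essentially. The paper fixes $y_*$, the ancestor of $x_0$ at distance $\lceil\log^M n\rceil-1$, and decomposes the failure event $F$ according to whether the walker hits $y_*$ and how many times it returns to $x_0$ before doing so. The case ``$F$ and $\tau_{y_*}>n$'' is bounded by $e^{-n^{1/4}}$ via (R)$_M$ applied to $y_*$ (the walker is then trapped at distance $<\log^M n$ from $y_*$); the case of $k$ returns to $x_0$ before $\tau_{y_*}$ is bounded exponentially in $k$ via Lemma~\ref{lemma:parity}, which gives a uniform $\kappa/2^{s+1}$ chance of increasing distance within $2s$ steps at each return; and the case ``$\tau_{y_*}$ before the $k$-th return'' is bounded by $k/(\lceil\log^M n\rceil-1)$ via a gambler's-ruin comparison along the $x_0$--$y_*$ path. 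Taking $k=(\log\log n)^2$ yields the stated bound. The role of (R)$_M$ is exactly to exclude the scenario your loop-process argument cannot handle: the walker lingering indefinitely in growing side-branches between $x_0$ and $y_*$ without either returning to $x_0$ or reaching $y_*$.
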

The above lemma basically says that, when (R)$_M$ is satisfied and $x_0$ is ``far'' from $y$, then it is likely that the distance between the walker and $y$ will increase by at least one unit by time $n$. This probability is large enough that we are likely to see many such increases in a small time window. 

The proof of Lemma \ref{lem:key} for the particular case BGRW is done in \cite{FIORR} and relies on condition (R)$_{M}$, which may be see as a \textit{global} condition since it gives information on how the walker $X$ is exploring/building the tree, and also on a \textit{local} feature of the \Name{} in this particular case: at each step $X$ has probability at least $p/2$ (where $p$ is the parameter of the model) of increasing its distance from the root by one unit.

This local feature is important because if the walker hits the bottom of the tree  many times then it is likely that after one of these hits it adds a new leaf and jump to it. However, in the case $s > 1$ this local feature is lost since the walk may hit the bottom with the wrong parity (at times not multiple of $s$) and then it goes back with probability one. Fortunately, a local correction is possible at the cost of a fixed probability depending on $s$. This is the core  of our next result and will be a key step for proving  Lemma \ref{lem:key}.
\begin{lemma}[Correcting the parity]\label{lemma:parity}Consider a $\Name{}$ with  $s$ odd  and independent environment process satisfying condition \eqref{cond:UE}, then
	\begin{equation*}
	\forall t \in \N, \inf_{T,x_0} \Pd_{T,x_0, s, \bxi}\left( \exists m \le 2s, \dist{t+m}{X_{t+m}}{y} = \dist{}{x_0}{y} + 1 \; \middle | \; X_{t} = x_0\right) \ge \frac{\kappa}{2^{s+1}}\;.
	\end{equation*}
\end{lemma}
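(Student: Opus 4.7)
My plan is to exhibit, uniformly in $(T, x_0, y, t)$ with $X_t = x_0$ and letting $d := \dist{}{x_0}{y}$, an event $E$ contained in $\{\exists m \le 2s : \dist{t+m}{X_{t+m}}{y} = d+1\}$ such that $\Pd_{T,x_0,s,\bxi}(E) \ge \kappa/2^{s+1}$. The shape of the bound dictates the structure of $E$: one factor $\kappa$ coming from a single successful leaf addition (via \eqref{cond:UE}) together with roughly $s+1$ favourable binary local choices. Two uniform estimates are used throughout: (i) at any internal vertex $v$ of degree $D \ge 2$ in a tree, at most one neighbour lies on the path to $y$, so the walker moves to a vertex at distance $d(v,y)+1$ from $y$ with probability $(D-1)/D \ge 1/2$; (ii) at a leaf of current degree $1$, if $\xi \ge 1$ new leaves are added to it, the walker moves to one of those new leaves with probability $\xi/(1+\xi) \ge 1/2$. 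Since $\dist{n}{X_n}{y}$ changes by $\pm 1$ each step, it suffices to ensure that the walker \emph{reaches} distance $\ge d+1$ at some time $\le t+2s$; the intermediate-value property then forces distance $=d+1$ at an earlier time.

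\textbf{Case split.} I would first dispose of the easy cases. If $x_0$ is not a leaf of $T_t$, or $x_0 = y$ with some neighbour, estimate (i) gives success in a single step with probability $\ge 1/2 \ge \kappa/2^{s+1}$. If $x_0 = y$ is isolated, I wait until the first multiple of $s$ in the $2s$-step window (which exists since the window has length $2s$), add $\xi \ge 1$ leaves (probability $\ge \kappa$) and move to one of them. The main case is $x_0$ a leaf with $x_0 \ne y$, in which case the step at $t$ is forced: $X_{t+1} = p$, the parent of $x_0$. Let $r \in \{0,1,\dots,s-1\}$ be the unique integer with $t + r \equiv 0 \pmod s$. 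If $r = 0$, then at step $t$ one has $\xi_t \ge 1$ with probability $\ge \kappa$, and estimates (i)--(ii) give a move to a new leaf of $x_0$ (at distance $d+1$) with probability $\ge 1/2$, for a total of $\ge \kappa/2$.

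\textbf{The subcase $r \ge 1$.} This is the crux. I would use parity to choose one of the two multiples of $s$ inside the window: namely, if $r$ is even, target $t^{\star} := t + r$; if $r$ is odd, target $t^{\star} := t + r + s \le t + 2s - 1$. In both cases the parity of $t^{\star} - t$ is even, which (since $s$ is odd) is the parity compatible with the walker sitting at a vertex at distance $d$ at time $t^{\star}$. Starting from the forced step $x_0 \to p$ at time $t+1$, I would construct a trajectory of length $t^{\star} - t - 1$ whose favourable event delivers $X_{t^{\star}}$ at a leaf of $T_{t^{\star}}$ at distance $\ge d$ from $y$. The moves along this trajectory are of two types: at internal vertices, estimate (i) lets me prescribe ``move to some away-from-$y$ neighbour'' with probability $\ge 1/2$; at leaves, the move is forced. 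Conditional on the endpoint being a distance-$\ge d$ leaf of degree $1$, the leaf addition at $t^{\star}$ (probability $\ge \kappa$) combined with estimate (ii) (probability $\ge 1/2$) produces a vertex at distance $\ge d+1$ at time $t^{\star}+1 \le t + 2s$. Collecting: at most $s$ factors of $1/2$ in the routing, one factor $\kappa$ from the leaf addition, one final factor $1/2$ from the move to a new leaf, yielding $\ge \kappa/2^{s+1}$.

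\textbf{Main obstacle.} The principal difficulty is verifying that the routing trajectory can always be built: one must show that, uniformly over arbitrary finite trees, over every choice of $y$, and over every $r \in \{1,\dots,s-1\}$, a chain of at most $s$ favourable $(D-1)/D$-choices does land the walker on \emph{some} leaf at distance $\ge d$ from $y$ at the prescribed time $t^{\star}$. The delicate point is that the degree of $p$ (and of any internal vertex visited) can be arbitrarily large, so one cannot afford to insist on any specific vertex at any intermediate step, and the ``return to $x_0$'' event has probability $1/\deg(p)$, which is not uniformly bounded below. The remedy is to formulate each requirement only as ``any away-from-$y$ neighbour'' or ``any leaf''---precisely the form in which estimate (i) gives a uniform $1/2$ bound---and to let forced leaf-to-parent transitions carry no probabilistic cost. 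This parity-handling, absent in Lemma~\ref{lemma:Mhalf} (which starts at a multiple of $s$), is exactly what inflates the exponent to $s+1$ rather than the $\lfloor (s+1)/2\rfloor$ appearing there.
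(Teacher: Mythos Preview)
Your proposal is correct and follows essentially the same route as the paper's proof. Both arguments split into the easy case $\deg(x_0)\ge 2$, the case $t\equiv 0\pmod s$, and the main case where $x_0$ is a leaf with nonzero residue; both then select one of the two multiples of $s$ in the window $[t,t+2s]$ according to parity (your even/odd $r$ corresponds exactly to the paper's even/odd $s-r$), bounce between the parent $p$ and its away-from-$y$ neighbours until that time, and finish with a single $\kappa$-cost leaf addition plus a $\tfrac12$-cost step onto a new leaf. The only cosmetic difference is that the paper disposes of the non-worst tree structures by an explicit reduction (``the worst situation is when all such neighbours are leaves, and if not we already succeed with probability $\ge\tfrac12$ in one step''), whereas you phrase each constraint as ``any away-from-$y$ neighbour'' so that the bound $\ge\tfrac12$ is automatic; your paragraph on the main obstacle is exactly this reduction in disguise. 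One small point to tighten: your sentence ``delivers $X_{t^\star}$ at a leaf of $T_{t^\star}$ at distance $\ge d$'' is literally true only in the worst (pure-bounce) case---in other trees the walker may already sit at an internal vertex beyond distance $d$ at time $t^\star$---but as you note, by that time distance $d+1$ has already been crossed, so the inclusion $E\subset\{\exists m\le 2s:\dist{t+m}{X_{t+m}}{y}=d+1\}$ holds regardless.
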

Observe that for $s=1$ the lemma follows immediately, since the walker has probability at least $\kappa$ of attaching a new leaf on $x_0$ and probability at least $1/2$ of jumping to it. 
\begin{proof}[Proof of Lemma~\ref{lemma:parity}] We split the proof into cases.\\ 
	
	\noindent \underline{Case ${\rm deg}_{T}(x_0)\geq  2$.} We know that every time the walker visits $x_0$ it has probability
	at least $\frac{d-1}{d}\geq \frac{1}{2}\geq \frac{\kappa}{2^{s+1}}$ to jump to  a neighbor $x'$ of $x_0$ with 
	${\rm dist}(x',y)={\rm dist}(x_0,y)+1$. \\
	
	\noindent \underline{Case $t=0\, {\rm mod} \, s$.} The walker has probability at least $\kappa/2$ to attach at least one new leaf to $x_0$ and to jump to one of these new neighbor $x'$ of $x_0$ with ${\rm dist}(x',y)={\rm dist}(x_0,y)+1$.\\

	\noindent \underline{Case ${\rm deg}_{T}(x_0)=1  $ and $t=ls + r$, with  $0<r<s$.}
	
	We know that $x_0$ has a unique neighbor $x_1$ (belonging to the path connecting $x_0$ and $y$) and it must be the case that ${\rm deg}_{T}(x_1)\geq 2$. 
	We then consider two sub-cases: $s-r$ is {\em even} and $s-r$ is {\em odd}.  
	\begin{itemize}
		\item \underline{$s-r$ is {\em even}:} After visiting $x_0$ the walker necessarily will visit $x_1$. Then, with probability at least 
		$1/2$ the walker will visits one of the neighbors of $x_1$ (recall that ${\rm deg}_{T}(x_1)\geq 2$) which are not in the path connecting $x_1$ to $y$ ($x_0$ is also possible, and all of them have the same distance from $y$ as $x_0$).  It should be clear by now that the worst situation is when all such a neighbors are leaves (if not we have probability at least $1/2$ to increase further the distance from $y$, similarly to the case ${\rm deg}_{T}(x_0)\geq 2$)
		and therefore we are going to consider only this case. With probability  $1/2^{(s-r)/2}$, we have that $\dist{(l+1)s}{X_{(l+1)s}}{y} = \dist{}{x_0}{y}$.  Thus, with probability at least $\kappa$ the walker attaches a leaf on the vertex it resides on at time $(l+1)s$  and with probability $1/2$ it jumps to the new leaf. This proves that
		\begin{align*}
		\Pd_{T,x_0, s, \bxi}&\left( \dist{(l+1)s +1}{X_{(l+1)s+1}}{y} = \dist{}{x_0}{y} + 1 \;\middle | \; X_{ls +r } = x_0, {\rm deg}_{T}(x_0)=1 \right)
		\\
		& \ge \frac{\kappa}{2^{(s-r)/2+1}}\;.
		\end{align*}
		
		\item \underline{$s-r$ is {\em odd}:} this case is similar to the previous with the only difference that at time $(l+1)s$  the walker cannot resides on vertices with the same distance than $y$ as $x_0$, and it is necessary to take some extra steps. Note that
		with probability at least $1/2^{(s-r-1)/2}$ we have that $X_{(l+1)s} = x_1$. Then, taking other $s$ steps,  regardless the value of  $\xi_{(l+1)s}$, we still have probability at least $1/2^{(s+1)/2}$ of landing on $x_0$ or on one of the other leaves attached to $x_1$. This proves that
		\begin{equation*}
		\Pd_{T,x_0, s, \bxi}\left( X_{(l+2)s} \text{ is a leaf of }x_1 \;\middle | \; X_{ls +r } = x_0, {\rm deg}_T(x_0)=1 \right) \ge \frac{1}{2^{s}}\;.
		\end{equation*}
		Finally, with probability at least $\kappa$ we add leafs to $X_{(l+2)s}$ and with probability $1/2$ we jump to it. Then,
		\begin{align*}
		\Pd_{T,x_0, s, \bxi}&\left( \dist{(m+2)s+1}{X_{(l+2)s+1}}{y} = \dist{}{x_0}{y} + 1 \;\middle | \; X_{ls +r } = x_0, {\rm deg}_T(x_0)=1  \right) 
		\\ & \ge \frac{\kappa}{2^{s+1}}\;.
		\end{align*} 
	\end{itemize}
\end{proof}
Now we are able to prove Lemma~\ref{lem:key}.
\begin{proof}[Proof of Lemma \ref{lem:key}] 

	Denote by $y_*$ the vertex on the unique path from $x_0$ to $y$ with~$\dist{}{x_0}{y_*}=\lceil \log^M n\rceil -1$. By condition (R)$_M$, there exists $n_0$ depending only on $s,M$ and $\bxi$ such that:
	\begin{equation*}
	\forall n\geq n_0\,:\,\Pd_{T,x_0,s,\bxi}(\exists t\leq n\,:\, \dist{t}{X_{t}}{y_*} \geq \log^M n) \geq 1 - e^{-n^{1/4}}\;.
	\end{equation*}
	Let $F$ denote the event that the walker has failed to increase its distance from $y$, i.e, the event that $\dist{t}{X_{t}}{y} \leq \dist{}{x_0}{y}$ for all $t\leq n$. Let~$\tau_{y_*}$ be the hitting time of $y_*$ 
	\[\tau_{y_*}:=\inf\{t\in\N\,:\, X_{t}=y_*\}\in \N\cup\{+\infty\}\;.\]

	We also define inductively $\tau^{+k}_{x_0}$ as the $k$-th return time to $x_0$. Setting $\tau^{+0}_{x_0}:=0$, $\tau^{+k}_{x_0}$ is defined as
	\[\tau^{+k}_{x_0}:=\left\{\begin{array}{ll}+\infty, & \mbox{ if }\tau_{x_0}^{+(k-1)}=+\infty\;,\\ \inf\{t> \tau^{+(k-1)}_{x_0}\,:\,X_{t}=x_0\}\in \N\cup\{+\infty\}, & \mbox{otherwise}\;.\end{array}\right.\]
	Observe that, for each $k$, we bound the probability of $F$ as follows
	\begin{align}\label{eq:failureclaim3}
	\Pd_{T,x_0,s, \bxi}(F)&\leq\Pd_{T,x,s, \bxi}(F\cap \{\tau_{y_*}>n\}) + \Pd_{T,x_0,s, \bxi}(F\cap \{\tau^{+k}_{x_0}<\tau_{y_*}\leq n\})\\
	&\nonumber + \Pd_{T,x_0,s, \bxi}( F \cap \{\tau^{+k}_{x_0}\geq\tau_{y_*}\})\;.
	\end{align}
	The bounds for the first and third terms in the RHS are the easiest ones. For the first one, observe that in that event the walker has not achieved distance at least $\log^Mn$ from~$y_*$ in $n$ steps. Then, by condition (R)$_M$, this happens with probability at most $\exp \{-n^{1/4}\}$. Whereas, for the third one, note that in this event, before the $k$-th visit of $x_0$ the walk $X$ has reached $y_*$. By a simple comparison with a simple random walk on the line connecting~$x_0$ to~$y_*$ we bound the third term by $k/ (\lceil \log^Mn\rceil-1)$.

	We finally  consider the second term in the RHS of (\ref{eq:failureclaim3}). In order for $F\cap \{\tau^{+k}_{x_0}<\tau_{y_*}\}$ to take place, it must be that $X_t$ returns at least $k$ times to $x_0$ before visiting $y_*$ {\em but} never gets to jump to a neighbor of $x_0$ which does not belong to the unique path connecting $x_0$ to $y$. Note that if $s=1$ and (UE) holds, then at each visit to $x_0$ we have a bounded away from zero probability of jumping down. Thus, in this particular case, the second term of \eqref{eq:failureclaim3} decays exponentially fast in $k$. To extend this idea to for general $s$ odd we apply Lemma~\ref{lemma:parity} in the following way:
	
	Let $A_t$ denote the following event
	\begin{equation*}
	A_t := \left \lbrace \exists m \le 2s, \dist{t+m}{X_{t+m}}{y} = \dist{}{x_0}{y} + 1 \right \rbrace.
	\end{equation*}
	By Lemma~\ref{lemma:parity}, we have that, for all $t \in \N$
	\begin{equation}\label{eq:At}
	\inf_{T,x_0} \Pd_{T,x_0, s, \bxi}\left( A_t \; \middle | \; X_t = x_0 \right) \ge \frac{\kappa}{2^{s+1}}\;.
	\end{equation}
	Also notice that the following inclusion of events holds
	\begin{equation*}
	F \cap \{\tau^{+k}_{x_0} < \tau_{y_*} \le n\} \subset \bigcap_{j=0}^{\lfloor k/2s\rfloor} \left(A^c_{\tau^{+2sj}_{x_0}}\cap \{\tau^{+2sj}_{x_0}< n\} \right)\;.
	\end{equation*}
	Combining the Strong Markov Property with \eqref{eq:At} leads to
	\begin{equation*}
	\begin{split}
	\Pd_{T,x_0, s, \bxi} \left( A^c_{\tau^{+k}_{x_0}} \; \middle | \; \tau^{+k}_{x_0} < n, \bigcap_{j=0}^{\lfloor k/2s \rfloor-1} \left(A^c_{\tau^{+2sj}_{x_0}}\cap \{\tau^{+2sj}_{x_0}< n\} \right)\right) \le 1 - \frac{\kappa}{2^{s+1}}\;.
	\end{split}
	\end{equation*}
	The above bound implies that
	\begin{equation*}
	\Pd_{T,x_0, s, \bxi} \left(F \cap \{\tau^{+k}_{x_0} < \tau_{y_*} \le n\}\right) \le \left(1 - \frac{\kappa}{2^{s+1}}\right)^{\lfloor k/2s \rfloor} \le e^{\log(1-\kappa/2^{s+1})\lfloor k/2s \rfloor}\;.
	\end{equation*}
	Overall, we obtain
	\[
	\Pd_{T,x_0,s, \bxi}(F) \leq e^{-n^{1/4}} + \frac{k}{\lceil \log^Mn\rceil-1} + e^{\log(1-\kappa/2^{s+1})\lfloor k/2s\rfloor}\;.
	\]
	Setting $k = \log\log^2n$, proves the lemma.
\end{proof}
\subsubsection{Iterating the argument. 
} 
In Lemma~\ref{lemma:Mhalf}, we have proved that under (UE), and $s$ odd, (R)$_{1/2}$ holds. Now, in order to prove that \Name{} satisfies condition (R)$_M$, for some $M>1$ (and thus condition \eqref{eq:R}) we prove  the following lemma.

\begin{lemma}\label{lemma:iterating} Consider a $\Name{}$ with $s$ odd and independent environment process satisfying condition \eqref{cond:UE}. Then, if (R)$_{M}$ holds, so does (R)$_{M+1/2}$.
\end{lemma}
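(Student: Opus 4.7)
The plan is a two-phase bootstrap of $(R)_M$ to $(R)_{M+1/2}$, combining the hypothesis $(R)_M$ with Lemma~\ref{lem:key} and a Chernoff concentration. Without loss of generality assume $\dist{0}{x_0}{y} < \log^{M+1/2} n$, since otherwise the conclusion is immediate. Set $n_1 := \lfloor n/2 \rfloor$. By $(R)_M$ applied to the first $n_1$ steps, with probability at least $1 - e^{-n_1^{1/4}}$ there is a stopping time $\tau \le n_1$ at which $D_0 := \dist{\tau}{X_\tau}{y} \ge (\log n_1)^M \ge \tfrac12 \log^M n$; this is the \emph{base camp}.

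To bootstrap the exponent on $\log n$ from $M$ to $M+\tfrac12$ we push the walker past the base camp by iterated application of Lemma~\ref{lem:key}. Define the record-achieving stopping times $T_0 := \tau$ and, for $j \ge 0$,
\[
T_{j+1} := \inf\bigl\{ t > T_j : \dist{t}{X_t}{y} \ge D_0 + j + 1\bigr\}\;.
\]
Since the graph distance changes by at most one per step, on $\{T_{j+1} < \infty\}$ we have $\dist{T_{j+1}}{X_{T_{j+1}}}{y} = D_0 + j + 1$ exactly; in particular the walker's distance at every $T_j$ stays above $\log^M n_1$, which is precisely the threshold needed by Lemma~\ref{lem:key}. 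Using the strong Markov property and the shift-invariance of the environment hypotheses (Remark~\ref{geralSI}), applying Lemma~\ref{lem:key} at $T_j$ with time budget $b$ gives, for any $b$ and any $j$,
\[
\Pd\bigl( T_{j+1} - T_j \le b \,\big|\, \tilde{\mathcal F}_{T_j} \bigr) \ge 1 - \frac{2(\log\log b)^2}{(\log b)^M}\;.
\]

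Take $K' := \lceil 8 n^{1/4}\rceil$ and $b := \lfloor n_1/K' \rfloor$, so that $K' b \le n_1$ and $b \to \infty$ with $n$. Define the indicators $\xi_j := \mathbb{1}\{T_{j+1} - T_j \le b\}$ for $0 \le j < K'$. Each $\xi_j$, conditionally on $\tilde{\mathcal F}_{T_j}$, stochastically dominates an independent Bernoulli random variable of parameter $1 - p$ with $p := 2(\log\log b)^2/(\log b)^M = o(1)$. A stochastic-domination coupling with an i.i.d.\ Bernoulli sequence together with the multiplicative Chernoff bound yields
\[
\Pd\Bigl( \textstyle\sum_{j=0}^{K'-1} \xi_j < K'/2 \Bigr) \le e^{-c K'} \le e^{-n^{1/4}}
\]
for an absolute $c > 0$ and $n$ sufficiently large. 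On the complementary event, at least $K'/2 \ge 4 n^{1/4} \gg \log^{M+1/2} n$ of the first-passages $T_{j+1}-T_j$ are bounded by $b$; an accounting of the elapsed time then forces $T_{\lceil \log^{M+1/2}n\rceil} \le n$, so the walker attains distance $\ge D_0 + \lceil \log^{M+1/2}n \rceil \ge \log^{M+1/2}n$ from $y$ within time $n$. A union bound over the Phase 1 and Phase 2 failure events proves $(R)_{M+1/2}$ with total failure at most $e^{-n^{1/4}}$.

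The main technical obstacle is the gap between the polylogarithmic per-trial failure furnished by Lemma~\ref{lem:key} and the stretched-exponential $e^{-n^{1/4}}$ target demanded by $(R)_{M+1/2}$. The resolution is to \emph{over-schedule}: one allocates $K' = \Theta(n^{1/4})$ trial epochs, much more than the $\Theta(\log^{M+1/2} n)$ successful increments actually needed, so that the Chernoff concentration on the Bernoulli-dominated indicator sum delivers the required $e^{-\Omega(n^{1/4})}$ bound. The automatic preservation of the threshold $\log^M n_1$ at each stopping time $T_j$ (built into the definition via the unit-step property of the tree distance) is what lets one repeatedly invoke Lemma~\ref{lem:key} without needing auxiliary recovery phases.
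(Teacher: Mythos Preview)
There is a genuine gap in the time accounting. Your good event is $\sum_{j=0}^{K'-1}\xi_j \ge K'/2$, but this says nothing about the \emph{location} of the failures. A single index $j_0 < L := \lceil \log^{M+1/2}n\rceil$ with $\xi_{j_0}=0$ is perfectly compatible with the good event (indeed $L \ll K'/2$ failures are allowed), and then $T_{j_0+1}-T_{j_0}$ is unbounded---possibly infinite---so $T_L$ need not be $\le n$. The sentence ``an accounting of the elapsed time then forces $T_{\lceil \log^{M+1/2}n\rceil}\le n$'' is therefore unjustified: your record stopping times have no hard time cap on failure, and you cannot trade $\Theta(n^{1/4})$ successes \emph{somewhere} for $L$ successes \emph{at the beginning}. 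A related issue is that the conditional lower bound from Lemma~\ref{lem:key} only holds on $\{T_j<\infty\}$; once some $T_{j_0}=\infty$, all subsequent $\xi_j$ vanish deterministically and the Bernoulli domination breaks, so the Chernoff step is not cleanly justified either.

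The paper avoids both problems by scheduling \emph{fixed-duration} trials of length $\sqrt{n}$ (so $m=\sqrt{n}$ trials automatically fit inside time $n$) and then invoking Lemma~\ref{lem:stochdom} to produce $k=\log^{M+1/2}n$ \emph{consecutive} successes. A run of $k$ consecutive successes lifts the distance by $k$ regardless of what happened on earlier failed trials, and the elapsed time is trivially $\le m\sqrt{n}=n$. The price is that a block of $k$ consecutive successes only occurs with probability $\mu^k=n^{-o(1)}$, but with $\Theta(\sqrt{n}/k)$ disjoint blocks one still gets failure probability $(1-\mu^k)^{m/k}\le e^{-n^{1/4}}$. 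To repair your argument you would need either to cap the trial durations (and then handle the resulting loss of the ``automatic threshold'' property at each $T_j$, e.g.\ by inserting a recovery phase via $(R)_M$ after each failure), or to abandon Chernoff in favor of the consecutive-successes mechanism.
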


\begin{proof}[Proof of Lemma~\ref{lemma:iterating}] Once we have Lemma~\ref{lem:key}, the proof that (R)$_M$ implies (R)$_{M+1/2}$ is similar with the proof for BGRW provided in \cite{FIORR}. We give here just a sketch, and refer the reader to Proposition 3.4 in \cite{FIORR} for further details.
	
\medskip 
	
Observe that Lemma~\ref{lem:key}, replacing $n$ by $\sqrt{n}$, yields
\begin{equation*}
\Pd_{T_0,x_0, s, \bxi}\left(\exists t\leq \sqrt{n}: \dist{t}{X_{t}}{y} = \dist{0}{x_0}{y}+1\right)\geq 1  - 2^{M-1} \frac{(\log \log n )^2}{\log^M n}\;,
\end{equation*}
whenever $x_0$ is at distance at least $2^{-M}\log^M n$ from $y$. 
To guarantee we can actually use the above bound, we may use condition  (R)$_M$ for~$\sqrt{n}$ to obtain that distance $2^{-M}\log^M n$ from any vertex of $T_0$ is likely to be achieve in at most $\sqrt{n}$ steps,  i.e., we have that 
\begin{equation*}
\inf_{(T_0,x_0) \in \Omega} \mathbb{P}_{T_0, x_0, s, \bxi}\left(\exists m \leq \sqrt{n} : \dist{m}{X_{m}}{y}\geq 2^{-M}\log^M n  \right)\geq 1-e^{-n^{1/8}}\;,
\end{equation*}
for any $y \in T_0$. Thus, for $n$ sufficiently large, we may assume that the walker is at distance at least $2^{-M}\log^M n$ from $y \in T_0$. Once this is the case, the core of the argument is that the probability of increasing  the distance by one unit in $\sqrt{n}$ steps is high enough so that we are likely to see many consecutive such increase, and specifically, it is likely to see the walker increasing by one its distance $\log^{M+1/2} n$ times in a roll. This intuition may be  formalized using  Lemma~\ref{lem:stochdom}, letting the $I$'s to be the indicator of the event the walker has increased its distance from  $y$ by one unit in less than $\sqrt{n}$ steps, setting~$\mu = 1  - 2^{M-1} \frac{(\log\log n)^2}{\log^M n}$, $k=\log^{M+1/2}n$ and $m=\sqrt{n}$ (since in a time window of range $n$ we have at least $\sqrt{n}$ trials), we obtain essentially that
\begin{equation*}
\begin{split}
\Pd_{T_0,x_0,s,\bxi}\left(\mbox{at least $k$ consecutive $1$'s in the sequence $(I_j)_{j=1}^m$}\right) & \geq 1 - (1-\mu^k)^{\lfloor m/k\rfloor}\;.
\end{split}
\end{equation*}
Note that $k$ consecutive $1$'s implies that at some moment the walker is at distance at least~$k=\log^{M+1/2}n$ from  $y$. Moreover, our choices for $\mu$ and $m$ leads to
\[
(1-\mu^k)^{\lfloor m/k\rfloor}
\leq \exp\left(- \mu^{k} \frac{n^{1/2}}{2(\log^{M+1/2} n + 1)}\right)\;. 
\]
Since $k=\lceil \log^{M+1/2} n\rceil$, for sufficiently large $n$ we have
\[ \mu^{k} \geq \exp\left( -\left( 2(\log\log n)^2 2^{M+1} + o((\log\log n)^2)\right) \sqrt{\log n}\right)\mu  
=n^{-o(1)}\mu\;,
\]
using that $\left(1-\frac{b_n}{a_n}\right)^{a_n}\approx e^{-b_n -o(b_n)}$ for sufficiently large $n$ whenever $a_n, b_n \to \infty$ and $b_n=o(a_n)$. Overall, we obtain that
\[
(1-\mu^{k})^{\left\lfloor \frac{\lfloor \sqrt{n}\rfloor}{k}\right\rfloor}\leq \exp\left(-n^{1/4}\, \frac{n^{1/4-o(1)}\mu}{2(\log^{M+1/2} n+1)}\right) 
\leq \exp\left(-n^{1/4}\right)\;,
\]
for large enough $n$.
\end{proof}


\section{Structural knowledge: the environment growth}\label{sec:height}
In this section we analyze the growth of the sequence of rooted random trees $\{T_n\}_{n \in \N}$ generated by a \Name{} process. We denote by $h$ the \textit{height functional} defined for each tree $T$ as $h(T) = \max_x \dist{}{x}{\Root}$. From Theorem~\ref{theo:ballistic} if $\s$ is odd and the  environment process  satisfies condition (\ref{cond:UE}) then we have that
\[
\liminf_{n \to \infty}\frac{h(T_n)}{n} > 0\;, \; \Pd_{T_0, x_0, s, \boldsymbol{\xi}}-a.s.\;.
\]
This means that the height of the generated trees grows linearly in time. On the other hand, if $s$ is even and the environment satisfies condition \eqref{cond:I},  by Theorem~\ref{rec-even} item $(ii)$, the tree height stops growing almost surely. What does happen to the sequence of random variables~$\{h(T_n)\}_{n \in \N}$ under condition \eqref{cond:S}?   To begin answering the latter question, we recall the example from Section \ref{sec:results}: the  independent environment $\bxi$ with $\xi_j \sim {\rm Ber}(j^{-2})$  satisfies condition \eqref{cond:S}, however the sequence $\{h(T_n)\}_{n \in \N}$ is almost surely finite for any value of $\s$, since the process, eventually, stops adding new leaves. To avoid the above situation, we impose the additional condition \eqref{cond:UE} on the environment and prove the following proposition.

\begin{proposition}\label{prop:height} Consider a {$(2k,\boldsymbol{\xi})$-\Name{}} process whose environment process $\boldsymbol{\xi}$ satisfies conditions \eqref{cond:S} and \eqref{cond:UE}. Then, for every initial state  $(T_0,x_0)$
	\begin{equation*}
	h(T_n) \nearrow + \infty\;,
	\end{equation*}
$\Pd_{T_0, x_0, s, \boldsymbol{\xi}}$-almost surely.
\end{proposition}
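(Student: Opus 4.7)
The plan is a proof by contradiction. Since $h(T_n)$ is non-decreasing in $n$, the limit $h_\infty:=\lim_n h(T_n)$ exists in $\mathbb N\cup\{\infty\}$; I will suppose $\Pd_{T_0,x_0,s,\boldsymbol{\xi}}(h_\infty=H)>0$ for some finite $H\ge h(T_0)$ and derive a contradiction. On $\{h_\infty=H\}$ no leaf can ever be attached at depth $H+1$, so $\xi_{sm}=0$ must hold every time $X_{sm}$ sits at a depth-$H$ vertex. The strategy is to exhibit infinitely many indices $m$ with $X_{sm}$ at depth $H$; combined with $\Pd(\xi_{sm}\ge 1\mid \mathcal F_{sm-1})\ge\kappa>0$ from \eqref{cond:UE} (using the standing independence of $\boldsymbol\xi$), conditional Borel-Cantelli will then force $\xi_{sm}\ge 1$ at one such $m$, the desired contradiction.

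By Theorem~\ref{rec-even}(i) (whose hypothesis \eqref{cond:S} is satisfied) the walker is recurrent, so every vertex of the growing tree is visited infinitely often almost surely. On $\{h_\infty=H\}$ I fix a depth-$H$ leaf $v\in T_N$ for some random $N$ (which exists since $h(T_n)=H$ eventually). Because no child can be attached to $v$, it stays a leaf with a fixed parent $u$ at depth $H-1$ for every $n\ge N$, and $X$ visits $v$ at an infinite random sequence of times $t_1<t_2<\cdots$. The heart of the proof is the claim that infinitely many of the $t_i$ are divisible by $s$; once this is proved, the conditional Borel-Cantelli argument above closes the proof.

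To establish the residue claim I would analyze the excursion lengths $t_{i+1}-t_i$ modulo $s$. Two excursion families should provide enough residues: (a) length-$2$ bounces $v\to u\to v$, whose conditional probability at each visit of $X$ to $u$ is at least $1/\deg_{T_n}(u)\ge c/g(n)$ under \eqref{cond:S}, so they occur infinitely often by conditional Borel-Cantelli since $\sum_n 1/g(n)=\infty$; and (b) odd-length excursions, exemplified by the length-$(2H+1)$ path ``descend from $v$ to the root, traverse the self-loop exactly once, climb straight back up'', whose infinite recurrence follows by combining Corollary~\ref{cor:recur} (the walker uses the self-loop infinitely often) with a direct lower bound on the explicit path probability. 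Because $\gcd(2,2H+1)=1$ while $s$ is even, the random walk on $\mathbb Z/s\mathbb Z$ whose increments are $2$ and $2H+1$ is irreducible; the cumulative residues $\{t_i\bmod s\}$ therefore visit $0$ infinitely often.

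The main technical obstacle is (b). Corollary~\ref{cor:recur} only asserts infinitely many self-loop uses in the aggregate, and a priori the walker could pair them up within each excursion from $v$ so that every excursion length remains even. Ruling this out requires a quantitative lower bound, at each visit of $X$ to $v$, on the conditional probability of some odd-self-loop-count excursion template, summable to infinity. The explicit ``single self-loop, straight down and back up'' path has probability of order $g(n)^{-O(H)}$ under \eqref{cond:S}, which is summable to infinity only when $g$ grows sufficiently slowly; the general case would likely require combining several odd-length templates, or a separate argument exploiting how the walker's self-loop visits split across its excursions from $v$. This is the step where the input from \eqref{cond:S} beyond mere recurrence is most delicately exploited.
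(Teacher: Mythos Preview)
Your overall strategy---reduce to showing the walker sits at a depth-$H$ vertex at a time divisible by $s$, then invoke \eqref{cond:UE} and conditional Borel--Cantelli---is correct, and you rightly isolate the parity obstruction as the crux. But the proof has a genuine gap exactly where you flag it: step (b) is never carried out. Your explicit single-self-loop path has probability of order $g(n)^{-O(H)}$, which need not sum to infinity for general $g$ satisfying \eqref{cond:S}, and you offer no alternative. There is also a secondary issue you do not flag: even granting that excursion lengths $2$ and $2H+1$ each occur infinitely often, irreducibility of the abstract walk on $\mathbb Z/s\mathbb Z$ with step-set $\{2,2H+1\}$ does not by itself force the \emph{actual} residues $\{t_i \bmod s\}$ to hit $0$ infinitely often, because the increments $t_{i+1}-t_i$ are neither i.i.d.\ nor drawn from a fixed step-set but are dictated by a process whose conditional laws depend on the growing tree.

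The paper avoids both problems by moving the parity analysis from $v$ to the root. Let $y$ be the neighbor of $\Root$ on the path toward $v$ and let $z$ be the parent of $v$ (on $\{h_\infty=H\}$ all children of $z$ are depth-$H$ leaves). The decisive observation is that whenever the walker is at $\Root$, the self-loop and the edge to $y$ are equally likely, \emph{regardless of the degree of $\Root$}; so the sequence recording, among transitions that use one of these two edges, which one was taken is i.i.d.\ Bernoulli$(1/2)$. Self-loop crossings flip parity and the others do not, so mixing of this sequence forces $y$---and then, via the edge-propagation Lemma~\ref{lem:neighbors-recurrence}, $z$---to be visited with each parity infinitely often. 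Once the walker is at $z$ at an odd time, at most $s/2$ bounces $z\to\text{leaf}\to z$ (each with probability at least $1/2$, since all children of $z$ are leaves) place the walker on a depth-$H$ leaf at a time divisible by $s$; then \eqref{cond:UE} supplies a child with probability at least $\kappa$. This lower bound is uniform over right-parity visits to $z$, so conditional Borel--Cantelli finishes. The gain over your approach is that the root-local symmetry between the self-loop and the fixed edge to $y$ is degree-free, so no path-probability estimates along the spine from $v$ to $\Root$ are ever needed.
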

\begin{proof}
Observe that a vertex which maximizes the height of the tree is necessarily a leaf,
whose parent has only leaves as children. Call this parent $z$. Then $z$ will be visited infinitely many times, but we need to guarantee that it will be visited infinitely many times with opposite parity as that of the walker. Indeed, when $z$ is visited with opposite parity, then with probability greater than $1/2$ one of its leaves will be visited on the next transition when the walker will have the same parity as this visited leaf. Therefore, at any of these times when the walker visits $z$ with opposite parity, we have a probability of at least  $2^{-\s/2}$ to jump to one of its leaves at times multiple of $s$, and then  a probability of at least $\kappa$, granted by condition \eqref{cond:UE}, to increase by one the height of the tree. If this last event occurs with probability one, then also with probability one the height will increase indefinitely.

Under condition \eqref{cond:S} on the environment, consider a realization of the \Name{} from time $0$ to time $k$, for some fixed $k \ge 0$. Let $z \in T_k$ such as before. We have to show that $z$ is visited infinitely many times with opposite parity as that of the walker. From now on we call this parity the ``right parity''. Put $y$ as the neighbor of the $\Root$ such that $z$ belongs to the branch of $T_k$ starting at $y$. Following our proof of recurrence, we only have to show that $y$ is visited infinitely many times when the walker has the right parity. By Lemma \ref{lem:neighbors-recurrence} and Corollary~\ref{cor:recur} both the edges $e_1 = (\Root,\Root)$ and $e_2 = (\Root,y)$ are traversed infinite many times. Define the sequence of Bernoulli random variables $(a_j)_{j\ge 1}$ as $a_j = 1$ if on the $j$-th crossing, after time $k$, of either $e_1$ or $e_2$, the edge crossed is $e_1$, otherwise set $a_j=0$. The sequence $(a_j)_{j\ge 1}$ is i.i.d. From its mixing property, $y$ is visited on both even and odd times infinitely often, which implies that $y$ is visited infinitely many times when the walker has the right parity.
\end{proof}

\medskip

{We finish this section proving Proposition \ref{prop:null-recurr} which states that, for $s$ even, condition~(\ref{cond:S}) combined with (\ref{cond:UE}) implies null recurrence.
\begin{proof}[Proof of Proposition \ref{prop:null-recurr}] 
If the environment satisfies condition \eqref{cond:UE}  we can use  Lemma~\ref{lem:infinite-expectation}, assuring that the expected hitting time of vertices sufficiently far away from the initial state are infinite. By Proposition~\ref{prop:height}, under conditions (\ref{cond:S}) and (\ref{cond:UE}) the height of the tree goes to infinity. Therefore, eventually the walker will be sufficiently far away from any vertex.  Using the strong Markov property, the proof is complete.
\end{proof}
}


\section{Final comments} \label{sec:fincom}
We end this paper making some comments regarding the finiteness of the initial tree~$T_0$, the $\Name{}$ seen as a random graph process as well as the elliptic case in the context of $\Name{}$. We believe these comments could lead to interesting questions. 
\subsection{Finiteness of $T_0$}
Recall that in the results for $s$ even from Section~\ref{sec:rec}, we required the initial tree~$T_0$ to be finite. However the definition in Section \ref{sec:model} consider any initial locally finite tree. More generally, the initial state of the $\Name$ may be sampled according to some distribution $\nu$ over the space of pairs $(T, x)$, where $T$ is a locally finite tree, possibly infinite.
	However, allowing infinite trees may lead to different questions from those we have addressed in this paper. For instance, in the infinite case one may not observe the trapped regime we proved for some environment conditions when $s$ is even, as is illustrated in the example below.
	\begin{example}[A heavy infinity tree] For each natural $n\ge 1$, let $d_n$ be $n^4$. Now, consider the infinity tree $T_{\infty}$ defined recursively in such way that all vertices at level $n$ have degree~$d_n$. Also consider the deterministic environment $\boldsymbol{\xi}$ defined by $\xi_{sj} = j^2$. Thus, $S_n \approx n^3$ and consequently the $(\boldsymbol{\xi},s)-\Name$ model satisfies condition \eqref{cond:I} for $f(n) = n^3$. However, regardless the parity of $s$, we have the following 
		\begin{equation*}
		\Pd_{T_{\infty}, {\rm root}, s, \boldsymbol{\xi}}\left( \dist{n}{X_n}{{\rm root}} = n \right) \ge  \prod_{j=1}^{n}\left(1-\frac{1+j^2}{j^4}\right) > 0\;.
		\end{equation*}
		Thus, the walker has positive probability of ignoring all the leaves it adds to $T_{\infty}$ and simply goes ``down the tree".
	\end{example}
In the above example probability of always going down can be made arbitrarily close to one by considering even heavier trees. Thus, for a $\Name$ process whose initial state is sampled from any distribution $\nu$ supported on heavy infinity trees, Theorem~\ref{rec-even} does not hold. This discussion leads naturally to the question: What are the conditions over the distribution $\nu$ in order that Theorem~\ref{rec-even} still holds?

Of course our results cover the case in which $\nu$ is supported on the subset of finite trees. The significant contribution here would be the case that involves not only ``well-behaved" infinite trees.

\subsection{Random tree process}Except from Section~\ref{sec:height}, all the results in this paper regard the walker $X$, and we have approached the $\Name{}$ process as a process of random walk on random environment. {However, one may also study the $\Name$   from the perspective of the random trees $\{T_n\}_{n \in \N}$. When focusing on the random trees $\{T_n\}_{n \in \N}$,  $\Name$ can be seen  as a random graph model: starting from $T_0$, at every time $n$ multiple of $s$, a random number $\xi_n$ of new leaves  are added to the current tree and attached to the vertex where the random walk $X$ resides at that time (see, Section~\ref{sec:model}).  When studying $\Name$ from the perspective of the random trees $\{T_n\}_{n \in \N}$   questions concerning the \textit{structure} and \textit{degree distribution} of the random sequence of trees $\{T_n\}_{n\geq 0}$ stand out.
 For instance, it would be interesting to study if  $\Name{}$ is capable of generating trees whose degree distribution obeys a power law. That is, under which assumption on the environment $\{\xi_n\}_{n\geq 0}$, there exist a positive constant $C$ and positive exponent $\gamma$ (possibly depending on $\{\xi_n\}_{n\geq 0}$) such that for every fixed $d\geq 1$ 
 \begin{equation*}
	\lim_{n \rightarrow \infty} \frac{1}{|V(T_n)|}\sum_{x\in V(T_n)}\mathbb{1}\{\degr{n}{x} = d\}  = C d^{-\gamma} \;, \quad \Pd_{T_0, x_0, s, \boldsymbol{\xi}}\text{- a.s.}\;,
	\end{equation*}
	where, we recall that, $V(T_{n})$ denotes the vertex set of $T_n$  and for $x \in V(T_n)$, $\degr{n}{x}$ denotes the degree of $x$ in $T_n$.
 }

\subsection{Ellipticity}In the context of the classical RWRE, efforts have been made towards dropping the {\em uniformly} elliptic condition. For instance, in \cite{BRS16,CR14, FK2016} authors have obtained ballisticity criteria under elliptic condition.

On the other hand, in the context of $\Name{}$, an ellipticity condition means that the probability of adding at least one leaf is positive for each time multiple of $s$ but it vanishes in the long run. More formally, we could define the following environment condition
\begin{equation}\tag{E}\label{cond:E}
\lim_{n\to \infty} P \left( \xi_{n}\geq 1\right)=0\;.
\end{equation}
It is clear that if $P\left( \xi_{n}\geq 1\right)$ goes fast enough to zero, Borel-Cantelli lemma implies that the process is positive recurrent since the walker stops to add new leaves to the graph eventually. The interesting question here would be to find other regimes for the decreasing rate of $P\left( \xi_{n}\geq 1\right)$ for which zero speed and ballisticity are also observed.




%

\end{document}